\newtheorem{thm}{Theorem}[section]
\newtheorem{lem}{Lemma}[section]
\newtheorem{prop}{Proposition}[section]
\newtheorem{cor}{Corollary}[section]
\theoremstyle{definition}
\newtheorem{rem}[thm]{Remark}
\newcommand{\al}{\alpha}                \newcommand{\lda}{\lambda}
\newcommand{\om}{\Omega}                \newcommand{\pa}{\partial}
\newcommand{\va}{\varepsilon}           \newcommand{\ud}{\mathrm{d}}
\newcommand{\be}{\begin{equation}}      \newcommand{\ee}{\end{equation}}
                 \newcommand{\Sn}{\mathbb{S}^n}
\newcommand{\R}{\mathbb{R}}
\newcommand{\abs}[1]{\lvert#1\rvert}
\author{Tianling Jin and Jingang Xiong}
\title{\textbf{A fractional Yamabe flow and some applications}}
\begin{document}
% generates the title

%\date{}

\maketitle

\begin{abstract}
We introduce a fractional Yamabe flow involving nonlocal conformally invariant operators on the conformal infinity of asymptotically hyperbolic manifolds, and show that on the conformal spheres $(\Sn, [g_{\Sn}])$, it converges to the standard sphere up to a M\"obius diffeomorphism. These arguments can be applied to obtain
extinction profiles of solutions of some fractional porous medium equations. 
In the end, we use this fractional fast diffusion equation, together with its extinction profile and some estimates of its extinction time, to improve a 
 Sobolev inequality via a quantitative estimate of the remainder term.
\end{abstract}

%\tableofcontents

% 35R11 (35B40, 35K57, 46E35)

\section{Introduction}

Let $(M,g_0)$ be a compact Riemannian manifold of dimension $n\geq 2$. The following evolution equation for the metric $g$
\be\label{classic yamabe flow}
\frac{\pa}{\pa t}g(t)=-(R_{g(t)}-r_{g(t)})g(t),\quad g(0)=g_0
\ee
was introduced by Hamilton in \cite{Ham}, and is known as the Yamabe flow. Here, $R_{g(t)}$ is the scalar curvature of $g(t)$ and $r_{g(t)}=vol_{g(t)}(M)^{-1}\int_M R_{g(t)} \ud vol_{g(t)}$ is the average of $R_{g(t)}$. The existence and convergence of solutions of \eqref{classic yamabe flow} were established through \cite{Ham}, \cite{Chow92}, \cite{Y}, \cite{SS}, \cite{B4} and \cite{B07a}.

In \cite{GJMS}, Graham, Jenne, Mason and Sparling constructed a sequence of conformally invariant elliptic operators, $\{P_{k}^g\}$, on $(M,g)$ for all positive integer $k$ if $n$ is odd, and for $k\in \{1,\cdots, n/2\}$ if $n$ is even.
Moreover, $P_1^g$ is the conformal Laplacian $L_g:=-\Delta_{g}+c(n)R_g$ and $P_2^g$ is the Paneitz operator. The construction in \cite{GJMS} is based on the ambient metric construction of \cite{FG}. Up to a positive constant
$P_1^g(1)$ is the scalar curvature of $g$ and $P_2^g(1)$ is the $Q$-curvature. 
Some higher integer order curvature flows involving $P^g_k$, such as 
$Q$-curvature flow, 
have been studied in \cite{B03, MS, BFR, B06, Ho} and so on.

Making use of a generalized Dirichlet to Neumann map, Graham and Zworski \cite{GZ} introduced a meromorphic family of conformally invariant operators on the conformal infinity of asymptotically hyperbolic manifolds (see Mazzeo and Melrose \cite{MM87}). Recently,  Chang and Gonz\'alez \cite{CG} reconciled the way of Graham and Zworski to define conformally invariant operators $P_{\sigma}^g$ of non-integer order $\sigma\in (0,\frac{n}{2})$ and the localization method of  Caffarelli and Silvestre \cite{CS} for factional Laplacian $(-\Delta)^{\sigma}$ on the Euclidean space $\R^n$. These lead naturally to a fractional order curvature $R^g_{\sigma}:=P_{\sigma}^g(1)$, which will be called $\sigma$-curvature in this paper.  There have been several work on these conformally invariant equations of fractional order and prescribing $\sigma$-curvatures problem (fractional Yamabe problem and fractional Nirenberg problem), see, e.g., \cite{QR}, \cite{GMS}, \cite{GQ}, \cite{JLX1} and \cite{JLX2}. In this paper we study some flow of this fractional order curvature $R_{\sigma}^g$ associated with $P_{\sigma}^g$ on the standard conformal sphere $(\Sn,[g_{\Sn}])$, which is the conformal infinity of the Poincar\'e disk with the standard Poincar\'e metric. 

Let $g$ be a representative in the conformal class $[g_{\Sn}]$ and write $g=v^{\frac{4}{n-2\sigma}}g_{\Sn}$, where $v$ is positive and smooth on $\Sn$. 
Then we have
\be\label{def of sig cur}
P_{\sigma}^{g}(\phi)=v^{-\frac{n+2\sigma}{n-2\sigma}}P_{\sigma}^{g_{\Sn}}(\phi v)\quad \mbox{for any }\phi\in C^{\infty}(\Sn).
\ee
$P_{\sigma}^{g_{\Sn}}$, which is simply written as $P_{\sigma}$, has the formula
(see, e.g., \cite{Br})
\be\label{P sigma}
 P_\sigma=\frac{\Gamma(B+\frac{1}{2}+\sigma)}{\Gamma(B+\frac{1}{2}-\sigma)},\quad B=\sqrt{-\Delta_{g_{\Sn}}+\left(\frac{n-1}{2}\right)^2},
\ee
where $\Gamma$ is the Gamma function and $\Delta_{g_{\Sn}}$ is the Laplace-Beltrami  operator on $(\Sn, g_{\Sn})$. 
Let $Y^{(k)}$ be a spherical harmonic of degree $k\ge 0$. 
Since $-\Delta_{g_{\Sn}}Y^{(k)}=k(k+n-1) Y^{(k)}$, 
\be\label{value on spherical harmonics}
 B\Big(Y^{(k)}\Big)=\left(k+\frac{n-1}{2}\right)Y^{(k)}\quad\mbox{and}\quad P_\sigma\Big(Y^{(k)}\Big)=\frac{\Gamma(k+\frac{n}{2}+\sigma)}{\Gamma(k+\frac{n}{2}-\sigma)}Y^{(k)}. 
\ee

It is also well-known (see, e.g., \cite{Mo}) that $P_{\sigma}$ is the inverse of the spherical Riesz potential
\be\label{P sigma inverse}
K^{\sigma}(f)(\xi)=c_{n,\sigma}\int_{\Sn}\frac{f(\zeta)}{\abs{\xi-\zeta}^{n-2\sigma}}\,\ud vol_{g_{\Sn}}(\zeta),\quad f\in L^p(\Sn)
\ee
where $c_{n,\sigma}=\frac{\Gamma(\frac{n-2\sigma}{2})}{2^{2\sigma}\pi^{n/2}\Gamma (\sigma)}$, $1\leq p<\infty$ and $|\cdot|$ is the Euclidean distance in $\R^{n+1}$.
The inverses of spherical Riesz potentials have been constructed in terms of singular integrals in \cite{PS} and \cite{R}. 
When $\sigma\in (0,1)$, Pavlov and Samko \cite{PS} showed that if $v=K^{\sigma}(f)$ for some $f\in L^p(\Sn)$, then
\be\label{description of P sigma}
P_{\sigma}(v)(\xi)=P_{\sigma}(1)v(\xi)+c_{n,-\sigma}\int_{\Sn}\frac{v(\xi)-v(\zeta)}{\abs{\xi-\zeta}^{n+2\sigma}}\,\ud vol_{g_{\Sn}}(\zeta),
\ee
 where $c_{n,-\sigma}=\frac{2^{2\sigma}\sigma\Gamma(\frac{n+2\sigma}{2})}{\pi^{\frac{n}{2}}\Gamma(1-\sigma)}$ and $\int_{\Sn}$ is understood as $\lim\limits_{\va\to 0}\int_{|x-y|>\va}$ in $L^p(\Sn)$ sense. 
 
 Consider the normalized total $\sigma$-curvature functional
\[
S(g)=vol_g(\Sn)^{\frac{2\sigma-n}{n}}\int_{\Sn} R_{\sigma}^g\,\ud vol_g,\quad g\in [g_{\Sn}].
\]
The negative gradient flow of $S$ takes the form
\[
\frac{\pa g}{\pa t}=-\frac{n-2\sigma}{2n}( vol_g(\Sn))^{\frac{2\sigma-n}{n}}(R_{\sigma}^g-r_{\sigma}^g)g,
\]
where $r_{\sigma}^g$ is the average of $R_{\sigma}^g$. It is easy to verify that this flow preserves the conformal class and the volume of $\Sn$. By a rescaling of the time variable, we obtain the following evolution equation
\be\label{Y1}
\frac{\pa g}{\pa t}=-(R_{\sigma}^g-r_{\sigma}^g)g.
\ee
If we write $g(t)=v^{\frac{4}{n-2\sigma}}(\cdot,t)g_{\Sn}$,
then after rescaling the time variable, \eqref{Y1} can be written in an equivalent form
\be\label{Y2}
\frac{\pa v^{N}}{\pa t}=
-P_\sigma(v)+r_{\sigma}^g v^{N} \quad \mbox{on }\Sn,
\ee
where $N=(n+2\sigma)/(n-2\sigma)$. 

Let $\mathcal N$ be the north pole of $\Sn$ and 
\[
F: \R^n\to \Sn\setminus\{\mathcal N\}, \quad x\mapsto \left(\frac{2x}{1+|x|^2}, \frac{|x|^2-1}{|x|^2+1}\right)
\]
be the inverse of stereographic projection from $\Sn\backslash \{\mathcal N\}$ to $\R^n$. Then
\be\label{P-sigma to frac lap}
(P_\sigma(\phi))\circ F=  |J_F|^{-\frac{n+2\sigma}{2n}}(-\Delta)^\sigma(|J_F|^{\frac{n-2\sigma}{2n}}(\phi\circ F))\quad \mbox{for }\phi\in C^{\infty}(\Sn),
\ee
where
$
|J_F|=\left(\frac{2}{1+|x|^2}\right)^n
$
and $(-\Delta)^\sigma$ is the fractional Laplacian operator (see, e.g., \cite{Stein}).
Hence $u(x,t):=|J_F|^{\frac{n-2\sigma}{2n}}v(F(x),t)$ satisfies
\be\label{Y3}
\frac{\pa u^{N}}{\pa t}=
-(-\Delta)^\sigma u + r_{\sigma}^g u^{N} \quad \mbox{in }\R^n.
\ee
We will call \eqref{Y1}, \eqref{Y2} or \eqref{Y3} as a (normalized)
\emph{fractional Yamabe flow} when $\sigma\in (0,1)$. 

As observed in \cite{CG} that the operator $P_{1/2}^g$ is related to the Yamabe problem on manifolds with boundary (see, e.g., \cite{Ch, E1, E2, HL1, HL2}), this fractional Yamabe flow \eqref{Y1} with $\sigma=1/2$ is related to some generalization of Yamabe flow for manifolds with boundary studied in \cite{B02}.

 Throughout this paper we always assume that $0<\sigma<1$ without otherwise stated.
Our first result is the long time existence and convergence of solutions of \eqref{Y1} for any initial data in the conformal class of $g_{\Sn}$.

\begin{thm}\label{convergence of fractional yamabe flow on spheres}
Let $g(0)\in [g_{\Sn}]$ be a smooth metric on $\Sn$ for $n\ge 2$. Then the fractional Yamabe 
flow \eqref{Y1} with initial metric $g(0)$ exists for all time $0<t<\infty$. Furthermore, there exists a smooth metric $g_{\infty}\in [g_{\Sn}]$ such that
\[
R^{g_{\infty}}_{\sigma}=r^{g_{\infty}}_{\sigma}\quad\text{and}\quad\lim_{t\to\infty}\|g(t)-g_{\infty}\|_{C^l(\Sn)}=0
\]
for all positive integers $l$. 
\end{thm}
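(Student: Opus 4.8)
The plan is to run the fractional Yamabe flow \eqref{Y2} as a monotone gradient flow of the energy $S(g)$ and extract convergence from compactness plus a \L ojasiewicz-type or direct variational argument, exploiting the fact that on $(\Sn,[g_{\Sn}])$ all critical metrics of $S$ are completely classified. First I would set up local existence: writing $g=v^{4/(n-2\sigma)}g_{\Sn}$, equation \eqref{Y2} is a fast-diffusion-type equation $\pa_t v^{N}=-P_\sigma(v)+r_\sigma^g v^{N}$ with $N=(n+2\sigma)/(n-2\sigma)>1$, and $P_\sigma$ is a positive, conformally covariant pseudodifferential operator of order $2\sigma$ with the explicit symbol \eqref{value on spherical harmonics}. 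Using the singular-integral representation \eqref{description of P sigma} and the strong maximum principle for $P_\sigma$, I would first prove that the flow preserves positivity and smoothness, get uniform-in-time upper and lower bounds $0<c\le v(\cdot,t)\le C$ (the lower bound via a Harnack/moving-sphere argument, the upper bound via the energy), and thereby obtain long-time existence by standard parabolic bootstrapping applied to the equivalent Euclidean form \eqref{Y3}.

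Next I would record the monotonicity: along \eqref{Y1} the volume of $\Sn$ is preserved and $\frac{\ud}{\ud t}S(g(t))=-c_n\int_{\Sn}(R_\sigma^g-r_\sigma^g)^2\,\ud vol_g\le 0$, so $S(g(t))$ decreases to a limit $S_\infty$ and $\int_0^\infty\!\!\int_{\Sn}(R_\sigma^g-r_\sigma^g)^2\,\ud vol_g\,\ud t<\infty$. Combined with the uniform $C^\infty$ bounds on $v$ (hence on $g(t)$ in every $C^l$), this forces a sequence $t_j\to\infty$ along which $g(t_j)\to \tilde g$ in $C^\infty$ with $R_\sigma^{\tilde g}=r_\sigma^{\tilde g}$, i.e. $\tilde g$ is a metric of constant $\sigma$-curvature in $[g_{\Sn}]$. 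By the classification of such metrics (the solution of the fractional Yamabe problem on the round sphere, which I may cite from \cite{JLX1} or the uniqueness results behind it), $\tilde g$ is, up to scaling, the pull-back $T^*g_{\Sn}$ of the round metric by a M\"obius transformation $T$; since the flow fixes the volume, $\tilde g$ itself is such a pull-back.

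The real work is upgrading subconvergence along a sequence to convergence of the full flow and identifying a single limit $g_\infty$. Because the set of constant-$\sigma$-curvature metrics is a finite-dimensional manifold (the M\"obius orbit of $g_{\Sn}$, diffeomorphic to the hyperbolic ball $\mathbb H^{n+1}$), the energy $S$ is Morse--Bott there, and I expect to prove a \L ojasiewicz--Simon inequality $|S(g)-S_\infty|^{1-\theta}\le C\|R_\sigma^g-r_\sigma^g\|_{L^2(\ud vol_g)}$ for $g$ near the critical manifold, using the analyticity of $P_\sigma$ in the $v$-variable and the Fredholm property of the linearization (whose kernel is exactly the tangent space to the M\"obius orbit, computed from \eqref{value on spherical harmonics} at $k=0,1$). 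Feeding this into the energy identity gives $\int_{t}^{\infty}\|\pa_t g\|_{L^2}\,\ud t<\infty$, hence $g(t)$ converges in $L^2$, and then — by interpolation with the uniform $C^{l+1}$ bounds — in every $C^l$, to a fixed $g_\infty$ with $R_\sigma^{g_\infty}=r_\sigma^{g_\infty}$. The main obstacle is establishing this \L ojasiewicz--Simon inequality in the fractional, conformally-invariant setting with a degenerate critical manifold: one must control the nonlocal operator $P_\sigma$ carefully, verify the spectral gap of the linearized operator transverse to the M\"obius directions (again via \eqref{value on spherical harmonics}), and handle the noncompact symmetry group so that the \L ojasiewicz exponent argument closes; an alternative, if one only wants convergence up to M\"obius diffeomorphism, is to normalize the flow by composing with a time-dependent M\"obius map chosen to keep the center of mass of $v^{2n/(n-2\sigma)}\,\ud vol_{g_{\Sn}}$ at the origin, which removes the degeneracy and reduces matters to a genuine spectral-gap/\L ojasiewicz argument on the quotient.
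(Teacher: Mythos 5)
Your outline is essentially the paper's: short-time existence via a nonlocal Schauder theory plus the implicit function theorem, uniform upper and lower bounds on $v$ from a Harnack inequality combined with volume preservation, energy monotonicity $\frac{\ud}{\ud t}S(g)=-c\int_{\Sn}(R_\sigma^g-r_\sigma^g)^2\,\ud vol_g$, subsequential $C^\infty$-convergence to a constant-$\sigma$-curvature metric, and finally a \L ojasiewicz--Simon inequality (the paper's Appendix~A, which adapts L.\ Simon's argument to this nonlocal gradient flow) to upgrade subsequential to full convergence. Two steps you treat as routine are in fact where most of the paper's work lies: the Harnack inequality is proved by moving planes and rests on a new strong maximum principle and Hopf lemma for \emph{odd} solutions of nonlocal parabolic equations (the barrier construction in Lemma~\ref{lemtf} is the crux), and the ``standard parabolic bootstrapping'' requires nonlocal parabolic Schauder estimates for $a(x,t)\pa_t+(-\Delta)^\sigma$ with variable $a$ (Proposition~\ref{schauder estimates}, via freezing coefficients and fractional-order interpolation), together with the H\"older estimate of Athanasopoulos--Caffarelli to get the bootstrap started. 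Your worry about the degenerate (M\"obius) critical manifold in the \L ojasiewicz argument is also not how the paper resolves it: it does not normalize by a time-dependent M\"obius map, but proves the gradient inequality directly near a given critical point using elliptic Schauder and $L^2$ estimates for $P_\sigma$, following Simon's scheme verbatim.
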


\begin{rem} 
If we write $g_{\infty}=v_{\infty}^{\frac{4}{n-2\sigma}}g_{\Sn}$ where $v_{\infty}$ is a smooth and positive function on $\Sn$,  then Theorem \ref{convergence of fractional yamabe flow on spheres} implies that $v_{\infty}$ satisfies
\[
P_{\sigma}(v_{\infty})=r^{g_{\infty}}_{\sigma}\cdot v_{\infty}^{\frac{n+2\sigma}{n-2\sigma}},
\]
whose solutions are classified in \cite{CLO} and \cite{Li04}.
\end{rem}

We also consider the unnormalized fractional Yamabe flow
\[
\frac{\pa v^{N}}{\pa t}=-P_\sigma(v)\quad \mbox{on }\Sn\times (0,\infty),
\quad\text{or}\quad
\frac{\pa u^{N}}{\pa t}=-(-\Delta)^\sigma u\quad \mbox{in }\R^n\times (0,\infty).
\]
The second one is a fractional porous medium equation studied, e.g., in \cite{AC, PV, CV, dQRV, KL}, where it is taken the form
\begin{equation}\label{fpme}
\begin{cases}
\begin{aligned}
       u_t&=-(-\Delta)^{\sigma}(|u|^{m-1}u)&\quad &\text{in } \R^n\times(0,\infty),\\
       u(\cdot,0)&=u_0&\quad &\text{in }\R^n,
       \end{aligned}
\end{cases}
\end{equation}
with $m=\frac{n-2\sigma}{n+2\sigma}$, $\sigma\in (0,1)$. Models of this kind of fractional diffusion equations arise, e.g., in statistical mechanics \cite{Jara, Jara2, JKO} and heat control \cite{AC}.

These fractional diffusion equations have been systematically studied in \cite{PV} and \cite{dQRV}. It is proved in \cite{dQRV} that if $u_0\in L^1(\R^n)\cap L^p(\R^n)$ for $p>4n/(n+2\sigma)$, then there exists a unique strong solution (see \cite{dQRV} for the definition) of \eqref{fpme}, and the solution will extinct in finite time.
More precisely, if $u_0$ is nonnegative but not identically equals to zero, then there exists a $T=T(u_0)\in (0,\infty)$ such that $u(x,t)>0$ in $\R^n\times (0,T)$, and $u(x,T)\equiv 0$ in $\R^n$. 

We are interested in analyzing the exact behavior of solutions of \eqref{fpme} near the extinction time for fast decaying initial data. 
In the classical case $\sigma=1$, the extinction profiles of solutions of porous medium equations have been described in the results of \cite{GP, ds, DS, BBDGV, BDGV} and so on.

Theorem \ref{extinction profile} below describes the extinction profile of $u(x,t)$, which extends the result of del Pino and Sa\'ez in \cite{ds} for $\sigma=1$ to $\sigma\in (0,1)$. 

\begin{thm}\label{extinction profile}
Assume that $u_0\in C^{2}(\R^n)$ is positive in $\R^n$ for $n\ge 2$. In addition, we assume, for $(u_0^m)_{0,1}(x):=\abs x^{2\sigma-n}u_0^m(x/\abs x^2)$,
 that $(u_0^m)_{0,1}(x)$ can be extended to a positive and $C^2$ function near the origin. There exist $\lda>0$ and $x_0\in R^n$ such that if $T=T(u_0)\in (0,\infty)$ denotes the extinction time of the solution of \eqref{fpme}, then
\[
(T-t)^{-1/(1-m)}u(x,t)=k(n,\sigma)\left(\frac{\lda}{\lda^2+\abs{x-x_0}^2}\right)^{\frac{n+2\sigma}{2}} + \theta(x,t)
\]
with
\[
\sup_{\R^n}(1+\abs x^{n+2\sigma})\theta(x,t)\to 0\quad \text{as }t\to T,
\]
where $k(n,\sigma)=2^{\frac{n-2}{2}}\big((1-m)P_{\sigma}(1)\big)^{\frac{n-2\sigma}{4\sigma}}$ and $P_{\sigma}(1)$ is given in \eqref{value on spherical harmonics}.
\end{thm}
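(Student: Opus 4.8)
The plan is to transfer the problem from $\R^n$ to the sphere $\Sn$ via the conformal change already introduced in the paper, where the unnormalized flow becomes a clean evolution for a positive function $v$, and then use Theorem~\ref{convergence of fractional yamabe flow on spheres} on the \emph{normalized} flow to extract the limiting profile. Concretely, starting from $u$ solving \eqref{fpme} with $m=(n-2\sigma)/(n+2\sigma)$, set $w=u^m$, so $w$ solves the unnormalized fractional Yamabe flow $\pa_t(w^{1/m})=-(-\Delta)^\sigma w$, i.e.\ $\pa_t(w^{N})=-(-\Delta)^\sigma w$ with $N=1/m=(n+2\sigma)/(n-2\sigma)$. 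Pulling back by the inverse stereographic projection $F$ and using \eqref{P-sigma to frac lap}, one obtains a positive function $\tilde v(\xi,t)$ on $\Sn$ solving $\pa_t(\tilde v^{N})=-P_\sigma(\tilde v)$; the hypothesis on $(u_0^m)_{0,1}$ is exactly what guarantees that the pulled-back initial datum $\tilde v(\cdot,0)$ extends to a positive smooth (or $C^2$) function across the north pole $\mathcal N$, so that $\tilde v$ is a genuine solution of the unnormalized flow on all of $\Sn$.

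Next I would relate the unnormalized flow to the normalized one by the standard time-dependent rescaling $v(\xi,s)=\alpha(t)\tilde v(\xi,t)$ with a new time variable $s=s(t)$, chosen so that $v$ solves \eqref{Y2}; this is the fractional analogue of the classical Yamabe-flow reparametrization, and under it the extinction time $T$ of $\tilde v$ (equivalently of $u$) corresponds to $s\to\infty$. Theorem~\ref{convergence of fractional yamabe flow on spheres} then gives a smooth limit metric $g_\infty=v_\infty^{4/(n-2\sigma)}g_{\Sn}$ with $R^{g_\infty}_\sigma\equiv r^{g_\infty}_\sigma$ constant, and by the remark following that theorem $v_\infty$ solves $P_\sigma(v_\infty)=r^{g_\infty}_\sigma v_\infty^{N}$; by the classification in \cite{CLO, Li04}, $v_\infty$ corresponds to the standard metric pulled back by a M\"obius transformation, so $v_\infty\circ F$ is, up to the Jacobian factor $|J_F|^{(n-2\sigma)/(2n)}$, a multiple of a standard bubble $\big(\lda/(\lda^2+|x-x_0|^2)\big)^{(n-2\sigma)/2}$. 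Unwinding $w=u^m$ back to $u=w^{1/m}=w^N$ raises the exponent $(n-2\sigma)/2$ to $(n+2\sigma)/2$, which is exactly the profile in the statement, and pinning down the constant $k(n,\sigma)=2^{(n-2)/2}((1-m)P_\sigma(1))^{(n-2\sigma)/(4\sigma)}$ amounts to tracking the rescaling factor $\alpha(t)$ against the ODE $\dot\alpha\sim (1-m)r_\sigma^{g}\alpha$ near extinction together with the normalization $|J_F(0)|=2^n$ and the value $P_\sigma(1)$ from \eqref{value on spherical harmonics}.

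Finally, the convergence statement $(T-t)^{-1/(1-m)}u(x,t)\to k(n,\sigma)\big(\lda/(\lda^2+|x-x_0|^2)\big)^{(n+2\sigma)/2}$ in the weighted norm $(1+|x|^{n+2\sigma})\theta(x,t)\to0$ must be read back from the $C^l(\Sn)$ convergence $g(s)\to g_\infty$: since $v(\cdot,s)\to v_\infty$ uniformly on $\Sn$ (with all derivatives), and since $u(x,t)=|J_F|^{(n-2\sigma)/(2n)\cdot N}\,\tilde v(F(x),t)^{N}$ while the weight $(1+|x|^{n+2\sigma})$ is comparable to $|J_F|^{-(n+2\sigma)/(2n)}$ near infinity (this is the weight for which uniform convergence on $\Sn$ including at $\mathcal N$ translates into a weighted uniform statement on $\R^n$), the sphere-side convergence including continuity at $\mathcal N$ yields precisely the claimed decay of $\theta$. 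The conversion between the extinction-time normalization $(T-t)^{-1/(1-m)}$ and the exponential rescaling in $s$ requires the precise asymptotics $T-t\sim c\,e^{-(1-m)r_\sigma^{g_\infty}s}$, which one gets by integrating the relation between $s$ and $t$; matching constants here is what forces the factor $(1-m)P_\sigma(1)$.

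I expect the main obstacle to be two intertwined points: first, showing that the pulled-back solution on $\Sn$ is well-defined and smooth across the north pole (so that the global theory of \eqref{Y1} on $\Sn$ applies), which is where the seemingly technical hypothesis on $(u_0^m)_{0,1}$ does real work and may require a careful local regularity/removable-singularity argument for the nonlocal equation at $\mathcal N$; and second, converting the qualitative $C^l$ convergence of Theorem~\ref{convergence of fractional yamabe flow on spheres} (which carries no rate) into the quantitative weighted statement near the extinction time — in particular controlling the rescaling factor $\alpha(t)$ and the reparametrization $s(t)$ well enough to identify $T$ with the integral of $\alpha^{1-N}$ and to get the exact constant $k(n,\sigma)$. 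The identification of $\lda$ and $x_0$ is immediate from the M\"obius classification once convergence is established, so it is the passage between the two time scales and the regularity at $\mathcal N$ that carry the weight of the proof.
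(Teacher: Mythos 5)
Your overall strategy — pull back to $\Sn$ and use a time-dependent rescaling to turn the unnormalized flow into a convergent one — is similar in spirit, but the \emph{specific} reduction you propose is genuinely different from what the paper does, and the difference hides the hardest step. You rescale by a solution-dependent factor $\alpha(t)$ chosen so that $v$ solves the volume-normalized Yamabe flow \eqref{Y2}, and then invoke Theorem \ref{convergence of fractional yamabe flow on spheres}. The paper instead rescales by the explicit extinction-time factor $(T-t)^{-m/(1-m)}$ with $t=T(1-e^{-s})$ (relation \eqref{u to v}), which yields the \emph{autonomous} equation \eqref{fpme for v} with the fixed coefficient $\tfrac{1}{1-m}$ in place of the time-dependent $r_\sigma^g$; note that \eqref{fpme for v} is not \eqref{Y2}. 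Convergence for \eqref{fpme for v} is then proved directly (Theorem \ref{convergence of v}) via universal two-sided bounds from barrier comparison plus the Harnack inequality (Proposition \ref{universal estimate}), a Lyapunov functional $J$ (Lemmas \ref{decreasing energy}, \ref{positive energy}), and a Simon/Lojasiewicz-type uniqueness theorem (Theorem \ref{convergence of flow}) — i.e.\ a direct del Pino--S\'aez-style argument, not a reduction to the normalized Yamabe flow theorem.

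The gap in your route is exactly the step you flag as ``the passage between the two time scales.'' Theorem \ref{convergence of fractional yamabe flow on spheres} is purely qualitative: it gives $C^l(\Sn)$ convergence with no rate and with an implicit time variable $s$. To recover the statement of Theorem \ref{extinction profile}, with the explicit factor $(T-t)^{-1/(1-m)}$ and the explicit constant $k(n,\sigma)$, you must show that your implicit $\alpha(t)$ is asymptotic to a constant multiple of $(T-t)^{-m/(1-m)}$, equivalently that $\|\tilde v(\cdot,t)\|_{L^{N+1}(\Sn)}$ decays like $(T-t)^{1/(1-m)}$ with a definite limit. This extinction-rate estimate is not a corollary of the $C^l$ convergence of the normalized flow; it is of the same nature and difficulty as the universal estimates (Proposition \ref{universal estimate}) the paper proves directly for the autonomous rescaled equation, so the bulk of the work is relocated rather than eliminated and is left unresolved in your sketch. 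Two smaller issues: Theorem \ref{convergence of fractional yamabe flow on spheres} as stated requires a $C^\infty$ initial metric, whereas here the pulled-back datum is only $C^2$; the paper sidesteps this by running Proposition \ref{STE} (which needs only $C^{2\sigma+\al}$) for a short time and restarting the analysis from a positive smooth $v(\cdot,s^*/2)$, and you would need a similar device. Finally, the characterization of the weighted convergence $(1+|x|^{n+2\sigma})\theta\to0$ as encoding uniform convergence on $\Sn$ across $\mathcal N$, and the identification of $\lda,x_0$ from the M\"obius classification, are correct and match the paper.
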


Some estimates of the extinction time $T$ involving the sharp constant in Sobolev inequalities are postponed to Lemma \ref{estimate of extinction time} in Section \ref{inequality along flow}.

An application of Theorem \ref{extinction profile} is an improvement of some Sobolev inequality. A sharp form of the standard Sobolev inequality in $\R^n$ $(n\ge 3)$ asserts that
\be\label{sse}
S_{n}\|\nabla u\|_{L^2(\R^n)}-\|u\|_{L^{\frac{2n}{n-2}}(\R^n)}\geq 0
\ee
for all $u\in \dot H^{1}(\R^n)=\{u\in L^{\frac{2n}{n-2}}(\R^n): \nabla u \in L^2(\R^n)\}$, where $S_{n}$ is the sharp constant obtained in \cite{aubin} and \cite{T}. 

There have been many results on remainder terms of Sobolev inequalities  (see, e.g., \cite{BN, BL, BE, CFMP, CCL, D}), which give various lower bounds of the left-hand side of \eqref{sse}.

Recently, Carlen, Carrillo and Loss in \cite{CCL} noticed that some Hardy-Littlewood-Sobolev inequalities in dimension $n\geq 3$ and some special Gagliardo-Nirenberg inequalities can be related by a fast diffusion equation. In another recent paper \cite{D}, Dolbeault used a fast diffusion flow to obtain an optimal integral remainder term which improves \eqref{sse} in dimension $n\geq 5$. Inspired by \cite{CCL} and \cite{D}, we consider some Sobolev inequality involving fractional Sobolev spaces of order $\sigma\in(0,1)$, compared to those mentioned above corresponding to $\sigma=1$.

For any $\sigma\in(0,1)$, the Sobolev inequality (see, e.g., \cite{Stein} or \cite{DPV}) asserts that
\be\label{eq:fs}
\|u\|^2_{L^{2^*(\sigma)}}\leq S_{n,\sigma}\|u\|^2_{\dot H^{\sigma}},\quad \forall \ u \in \dot H^{\sigma}(\R^n),
\ee
where $2^*(\sigma)=\frac{2n}{n-2\sigma}$, $S_{n,\sigma}$ is the optimal constant and $\dot H^{\sigma}(\R^n)$ is the closure of $C_c^{\infty}(\R^n)$ under the norm 
\be\label{hom frac norm}
\|u\|_{\dot H^{\sigma}}=\|(-\Delta)^{\sigma/2} u\|_{L^2(\R^n)}.
\ee 
The optimal constant $S_{n,\sigma}$ in the Sobolev inequality \eqref{eq:fs} is obtained by Lieb \cite{Lie83} and is achieved by
$u(x)=\left(1+\abs x^2\right)^{-\frac{n-2\sigma}{2}}.$
The Hardy-Littlewood-Sobolev inequality
\be\label{eq:HLS}
S_{n,\sigma}\|u\|^2_{L^{\frac{2n}{n+2\sigma}}}\geq\int_{\R^n}u(-\Delta)^{-\sigma}u\,\ud x,\quad \forall \ u \in L^{\frac{2n}{n+2\sigma}}(\R^n)
\ee
involves the same optimal constant $S_{n,\sigma}$, where $(-\Delta)^{-\sigma}$ is a Riesz potential defined by
\be\label{sigma potential}
(-\Delta)^{-\sigma}u(x)=c_{n,\sigma}\int_{\R^n}\frac{u(y)}{\abs{x-y}^{n-2\sigma}}\ud y.
\ee

We investigate the relation between \eqref{eq:fs} and \eqref{eq:HLS} via the fractional diffusion equation \eqref{fpme}, i.e.
\begin{equation*}
       u_t=-(-\Delta)^{\sigma}u^{m}
\end{equation*}
 with $m=1/N=\frac{n-2\sigma}{n+2\sigma}$. If we suppose that the initial data $u_0$ satisfies the assumptions in Theorem \ref{extinction profile}, then by Theorem \ref{convergence of v} (which is used to prove Theorem \ref{extinction profile}) $u(\cdot,t)$ is positive and smooth in $\R^n$ before its extinction time, and for any fixed $t$, $u(x,t)=O(\abs x^{-n-2\sigma})$ as $x\to\infty$. We define
 \be\label{definition of H}
 H(t):=H_{n,\sigma}(u(\cdot,t))=\int_{\R^n}u(-\Delta)^{-\sigma}u\ud x-S_{n,\sigma}\|u\|^2_{L^{\frac{2n}{n+2\sigma}}}.
 \ee
It follows from direct computations that $\frac{\ud}{\ud t} H\geq 0$ (see Proposition \ref{increase H}).

Consequently, one can prove \eqref{eq:HLS}, which is equivalent to $H\leq 0$, by showing
\[
\limsup_{t\to T} H(t)\leq 0,
\] where $T$ is the extinction time of \eqref{fpme}. This can be seen clearly from Theorem \ref{convergence of v}. From this and Proposition \ref{increase H} we also recover that $u^m$ is an extremal of \eqref{eq:fs} if $u$ is an extremal of \eqref{eq:HLS}. 

Along this fractional fast diffusion flow, we can improve the Sobolev inequality \eqref{eq:fs}, via a quantitative estimate of the remainder term. This improvement also holds as $\sigma\to 1$ and it extends some work of Dolbeault in \cite{D}.

\begin{thm}\label{duality}
Assume that $\sigma\in (0,1)$ and $n>4\sigma$. There exists a positive constant $C$ depending only on $n$ and $\sigma$ such that
for any nonnegative function $u\in\dot H^{\sigma}(\R^n)$ we have
\be\label{eq:estimate for the remainder term}
\begin{split}
S_{n,\sigma}\|u^N\|^2_{L^{\frac{2n}{n+2\sigma}}}&-\int_{\R^n}u^N(-\Delta)^{-\sigma}u^N\ud x \\
&\leq C\|u\|_{L^{2^*(\sigma)}}^{\frac{8\sigma}{n-2\sigma}}\left(S_{n,\sigma}\|u\|^2_{\dot H^{\sigma}}-\|u\|^2_{L^{2^*(\sigma)}}\right),
\end{split}
\ee
where $N=\frac{n+2\sigma}{n-2\sigma}$. Moreover, $C$ can be taken as $\frac{n+2\sigma}{n}(1-e^{-\frac{n}{2\sigma}})S_{n,\sigma}$.
\end{thm}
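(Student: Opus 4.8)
The plan is to run the unnormalized fractional fast diffusion flow $u_t = -(-\Delta)^\sigma u^m$ with $m = 1/N$ starting from the given nonnegative $u\in\dot H^\sigma(\R^n)$, and to integrate the differential inequality $\frac{\ud}{\ud t}H\ge 0$ (Proposition \ref{increase H}) from $t=0$ to the extinction time $T$, where by the analysis in Theorem \ref{extinction profile} and Theorem \ref{convergence of v} one has $\limsup_{t\to T}H(t)\le 0$. Thus $H(0)\le 0$ recovers \eqref{eq:HLS}, but to get the \emph{quantitative} remainder one must instead track a suitable normalized functional along the flow and control its derivative from below by a positive multiple of itself, producing exponential decay and hence an explicit bound. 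Concretely, I would set $w = u^m$ so that $w^N = u$, rewrite $H(0)$ in terms of $w$, and monitor the ratio of the deficit in \eqref{eq:HLS} to an appropriate power of a conserved or monotone quantity (the $L^{2n/(n+2\sigma)}$-norm of $u$, equivalently $\|w\|_{L^{2^*(\sigma)}}$, which is essentially controlled along the flow). The key identity is the differentiation formula behind Proposition \ref{increase H}: along the flow, $\frac{\ud}{\ud t}\int u(-\Delta)^{-\sigma}u\,\ud x = -2\int u^m\, u\,\ud x$ up to constants, and $\frac{\ud}{\ud t}\|u\|_{L^{2n/(n+2\sigma)}}^2$ computes against the same bilinear expression, so that $\frac{\ud}{\ud t}H$ is, after using \eqref{eq:fs} applied to $w=u^m\in\dot H^\sigma$, bounded below by a constant times $H$ itself (this is where $n>4\sigma$ enters, to make the relevant exponents and the Sobolev embedding for $u^m$ work).

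The main steps, in order, are: (1) record the evolution identities for $\int u(-\Delta)^{-\sigma}u$, for $\|u\|_{L^{2n/(n+2\sigma)}}^2$, and for the Sobolev-type quantity $\|u^m\|_{\dot H^\sigma}^2$ along the flow (integration by parts, using that $(-\Delta)^{-\sigma}$ and $(-\Delta)^{\sigma}$ are adjoint and that $(-\Delta)^{\sigma/2}u^m\in L^2$); (2) combine them to show $\frac{\ud}{\ud t}H \ge \gamma\, G$ where $G = S_{n,\sigma}\|u^m\|_{\dot H^\sigma}^2 - \|u^m\|_{L^{2^*(\sigma)}}^2 \ge 0$ is the Sobolev deficit for $w=u^m$ and $\gamma>0$ is explicit; (3) using $H\le 0$ and relating $-H$ to $G$ via the duality of \eqref{eq:fs} and \eqref{eq:HLS} — both having the same optimal constant $S_{n,\sigma}$ — derive a closed differential inequality of the form $\frac{\ud}{\ud t}(-H)\le -c(-H)$ after normalization by $\|u\|_{L^{2^*(\sigma)}}^{\,8\sigma/(n-2\sigma)}$, which is controlled along the flow; (4) integrate from $0$ to $T$, use $\limsup_{t\to T}H(t)\le 0$ from Theorem \ref{convergence of v}, and read off \eqref{eq:estimate for the remainder term} with $C = \frac{n+2\sigma}{n}(1-e^{-n/(2\sigma)})S_{n,\sigma}$, the factor $1-e^{-n/(2\sigma)}$ coming from integrating the exponential over the (normalized) flow time interval whose length is governed by the extinction-time estimate of Lemma \ref{estimate of extinction time}.

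The hard part will be step (3): passing from the monotonicity $\frac{\ud}{\ud t}H\ge\gamma G$ to a \emph{self-improving} inequality $\frac{\ud}{\ud t}(-H)\le -c\,(-H)$ requires comparing the two deficits — the Sobolev deficit $G$ for the exponent pair $(\dot H^\sigma, L^{2^*(\sigma)})$ applied to $u^m$, and the HLS deficit $-H$ for the pair $(L^{2n/(n+2\sigma)}, (-\Delta)^{-\sigma})$ applied to $u$ — and this comparison is exactly the duality between \eqref{eq:fs} and \eqref{eq:HLS}. One must show $-H \le \text{(explicit factor)}\cdot G$, i.e. that the HLS deficit is dominated by the Sobolev deficit of the "dual" function, with the factor involving $\|u\|_{L^{2^*(\sigma)}}$ to the right power so that dimensional/scaling homogeneity matches; tracking the optimal constant through this duality to obtain the sharp coefficient $\frac{n+2\sigma}{n}(1-e^{-n/(2\sigma)})S_{n,\sigma}$, rather than just some constant $C$, is the delicate point. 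A secondary technical obstacle is justifying all the integrations by parts and differentiations under the integral sign: this is where the regularity and decay from Theorem \ref{convergence of v} (positivity, smoothness, and $u(x,t)=O(|x|^{-n-2\sigma})$ for each $t<T$) are essential, and one should first prove the estimate for initial data satisfying the hypotheses of Theorem \ref{extinction profile} and then pass to general nonnegative $u\in\dot H^\sigma(\R^n)$ by density.
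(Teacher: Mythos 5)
Your overall strategy matches the paper's: run the fractional fast diffusion flow from $u_0$ with $w=u_0^m$, use the monotonicity $\frac{\ud}{\ud t}H\ge 0$ and the vanishing of $H$ at extinction, bound the effective integration time via Lemma~\ref{estimate of extinction time}, and finish by density. Steps (1), (2), (4) of your outline are essentially what the paper does, and you correctly identify the extinction-time bound $bT\le\frac{n}{2\sigma}$ as the source of the factor $1-e^{-n/(2\sigma)}$.

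However, step (3) as you have described it is circular. You propose to ``relate $-H$ to $G$ via duality'' so as to derive a first-order self-improving inequality $\frac{\ud}{\ud t}(-H)\le -c\,(-H)$. But an inequality of the form $-H(t)\le \text{(factor)}\cdot G(t)$ is precisely the statement of Theorem~\ref{duality} at each time slice (with $u=u^m(\cdot,t)$), so one cannot use such a comparison as an input. There is no independent route from ``both inequalities share the sharp constant $S_{n,\sigma}$'' to a pointwise-in-time domination of the HLS deficit by the Sobolev deficit; that is the content to be proved, not a tool.

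The paper sidesteps this by working with the \emph{second} derivative of $H$. Writing $F(t)=\int u^{m+1}$, $E=-\frac{1}{m+1}F'/F$ and $Q=-\frac{1}{m+1}F'F^{(2\sigma-n)/n}$, one has $H'=2F(S_{n,\sigma}Q-1)$ and then
\[
H''(t)=-(m+1)E(t)H'(t)-4m\,S_{n,\sigma}\,F(t)^{2\sigma/n}K(t),
\qquad K(t)=\int_{\R^n}u^{m-1}\bigl|-(-\Delta)^{\sigma}u^m+E\,u\bigr|^2\ge 0 .
\]
The hypothesis $n>4\sigma$ enters here: it guarantees $u^m(\cdot,t)\in L^2$, so Plancherel applies and $F''\ge 0$ (equivalently, $K\ge 0$ and is finite). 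Since $m>0$, this yields the linear second-order differential inequality $H''\le -(m+1)E\,H'$, hence $H'(t)\le H'(0)\,G(t,0)$ with $G(t,0)=\exp\bigl(-(m+1)\int_0^t E\bigr)$. Integrating $H'$ from $0$ to $T$ and using $H(T)=0$ gives $-H(0)\le H'(0)\int_0^T G(t,0)\,\ud t$, and the explicit constant then follows by estimating $\int_0^T G(t,0)\,\ud t\le\frac{1-e^{-bT}}{b}$ with $b=(m+1)S_{n,\sigma}^{-1}F(0)^{-2\sigma/n}$ and $bT\le\frac{n}{2\sigma}$. So the ``self-improvement'' happens at the level of $H'$, not of $-H$. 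To repair your proposal, replace step (3) by the computation of $H''$, the nonnegativity of $K$ under $n>4\sigma$, and the resulting Gronwall-type estimate for $H'$, then integrate once more in $t$.
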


The operators $P_{\sigma}$ and $(-\Delta)^{\sigma}$ are nonlocal, pseudo-differential operators. Generally speaking, strong maximum principles and Harnack inequalities might fail for nonlocal operators, see, e.g., a counterexample in \cite{Ka2}. The counterexample in \cite{Ka2} shows that the local non-negativity of solutions of certain nonlocal equations is not enough to guarantee local strong maximum principles and Harnack inequalities. However, if solutions are assumed to be globally nonnegative, then various strong maximum principles and Harnack inequalities have been obtained in, e.g., \cite{CaS}, \cite{TX} and \cite{JLX1}.

In this paper, we establish a strong maximum principle and a Hopf lemma for odd solutions of some linear nonlocal parabolic equations, which should be of independent interest. Our proofs make use of the expression \eqref{fractional laplacian expression in integral} of $(-\Delta)^{\sigma}$. The odd function in Lemma \ref{lemtf} will serve as a barrier function, which allows us to obtain a Hopf lemma.

The paper is organized as follows. In Section \ref{max and hopf} we prove a strong maximum principle and a Hopf lemma for odd solutions of some linear nonlocal parabolic equations. These are two crucial ingredients in our arguments. In Section \ref{harnack} we prove a Harnack inequality via the method of moving planes. This idea is essentially due to R. Ye \cite{Y}. In Section \ref{section: existence and convergence}, we show Schauder estimates, existence and convergence of solutions of the fractional Yamabe flows. Section \ref{Extinction profile of a fractional porous medium equation} is devoted to proving Theorem \ref{extinction profile}. We rewrite \eqref{fpme} on $\Sn$ and apply the methods in Section \ref{section: existence and convergence}. The improvement of the Sobolev inequality, Theorem \ref{duality}, is proved in Section \ref{inequality along flow}. Our proofs of Theorem \ref{extinction profile} and Theorem \ref{duality} adapt some arguments in \cite{ds} and \cite{D}, respectively. In Appendix \ref{appendix a}, we provide an analog of L. Simon's uniqueness theorem (see \cite{S}) for negative gradient flows in our nonlocal flow setting. In Appendix \ref{appendix b} we present some interpolation inequalities and elementary computations which are used in Section \ref{yambe}.

\bigskip

\noindent\textbf{Acknowledgements:} Both authors thank Prof. Y.Y. Li for encouragements and useful discussions. We also thank Prof. N. Sesum for bringing paper \cite{ds} to our attention. Tianling Jin was partially supported by a University and Louis Bevier Dissertation Fellowship at Rutgers University and Rutgers University School of Art and Science Excellence Fellowship. Jingang Xiong was partially supported by
CSC project for visiting Rutgers University and NSFC No. 11071020. He is very grateful to the Department of Mathematics at Rutgers University for the kind hospitality.

\bigskip

\textbf{Added in April 2012:} After the first author gave a seminar at Princeton University in April 2012, Professor Rupert L. Frank informed us that they studied strong maximum principles and Hopf lemma for odd solutions for nonlocal elliptic equations in \cite{FL}. This also leads us to another reference \cite{BLW} where some extension of Hopf lemma to non-local contexts is proved. Our case and the proofs are different from theirs. We thank Professor Rupert L. Frank for his interests in our work and bringing the references \cite{BLW, FL} to our attention.

\section{A strong maximum principle and a Hopf lemma for nonlocal parabolic equations}\label{max and hopf}

Let $x=(x',x_n)\in \R^n$, $\R^n_+=\{x:x_n>0,x\in \R^n\}$. 
Recall (see, e.g., \cite{silvestre}) that for $\sigma\in (0,1)$, if $u$ is bounded in $\R^n$ and $C^{2}$ near $x$, then$ (-\Delta)^{\sigma}u$ is continuous near $x$, and
\be\label{fractional laplacian expression in integral}
 (-\Delta)^{\sigma}u(x)=c_{n,-\sigma}\mbox{P.V.}\int_{\R^n}\frac{u(x)-u(y)}{\abs{x-y}^{n+2\sigma}}\ud y.
\ee
Here ``P.V." means the principal value and $c_{n,-\sigma}$ is the constant in \eqref{description of P sigma}.

For simplicity, throughout the paper we denote $-(-\Delta)^\sigma$ as $\Delta^\sigma$ and will not keep writing the constant $c_{n,-\sigma}$ and ``P.V.'' if there is no confusion.

\begin{lem}\label{wmp}
Let $w(x,t)\in C^{2,1}(\R^n\times\R)$ and $w(\cdot,t)$ be bounded in $\R^n$ for any fixed $t$. Suppose $w(x,t)$ satisfies 
$w(x',-x_n,t)=-w(x',x_n,t)$ for all $(x,t)$ and
\[
\liminf_{x_n\geq 0, |x|\to \infty}w(x,t)\geq 0\quad\mbox{for any fixed }t.
\]
If $w$ satisfies
 \be\label{eq: linear}
w_{t}\geq a(x,t)\Delta^{\sigma} w + b(x,t) w, \quad (x,t)\in \R^n_+\times (0, T],
 \ee
 where $a(x,t)$ is continuous and positive in $\overline{\R^n_+}\times [0,T]$, $b(x,t)$ is continuous and bounded in $\R^n_+\times [0,T]$, 
and $w(x,0)\geq 0$ for all $x\in \R^n_+$, then $w(x,t)\geq0$ in $\R^n_+\times [0,T]$.
 \end{lem}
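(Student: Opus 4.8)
The plan is to run a contradiction argument based on the minimum of $w$, exploiting the oddness of $w$ together with the sign condition at infinity. First I would assume the conclusion fails and find a first time at which $w$ touches zero from below. A technical caveat: since $b$ can be negative and the spatial domain is unbounded, I would first reduce to the case $b<0$ and to a strictly negative minimum by replacing $w$ with $\tilde w(x,t)=e^{-\Lambda t}w(x,t)+\delta$ for suitable constants (absorbing $\|b\|_\infty$ into the exponential and adding a small $\delta$ whose effect on the nonlocal term I control), so that it suffices to rule out a negative interior minimum of a function satisfying a strict inequality $\tilde w_t > a\,\Delta^\sigma \tilde w + b\tilde w$ with $b<0$; I will then send $\delta\to0$. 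Using the liminf hypothesis at infinity (together with the boundedness of $w(\cdot,t)$, uniform on $[0,T]$ by continuity), the infimum of $\tilde w$ over $\overline{\R^n_+}\times[0,T]$ is attained at some point $(\bar x,\bar t)$ with $\bar t>0$ (the initial condition handles $\bar t=0$), and I may take $\bar t$ to be the smallest such time.

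The core of the argument is to evaluate the differential inequality at $(\bar x,\bar t)$. There are two cases for the location of $\bar x$: either $\bar x\in\R^n_+$ (an interior minimum) or $\bar x\in\partial\R^n_+$, i.e. $\bar x_n=0$. In the interior case, at the minimum point $\tilde w_t(\bar x,\bar t)\le 0$ (minimum in $t$ from the left, as $\bar t$ is the first touching time), while $\tilde w(\bar x,\bar t)=\min\tilde w<0$ so $b(\bar x,\bar t)\tilde w(\bar x,\bar t)\ge0$ using $b<0$. For the nonlocal term, I would use the integral expression \eqref{fractional laplacian expression in integral}: write
\[
-\Delta^\sigma \tilde w(\bar x,\bar t)=(-\Delta)^\sigma\tilde w(\bar x,\bar t)=c_{n,-\sigma}\,\mathrm{P.V.}\!\int_{\R^n}\frac{\tilde w(\bar x,\bar t)-\tilde w(y,\bar t)}{\abs{\bar x-y}^{n+2\sigma}}\,\ud y.
\]
Since $(\bar x,\bar t)$ is a global minimum over $\overline{\R^n_+}\times[0,T]$, the integrand is $\le 0$ for $y\in\overline{\R^n_+}$. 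For $y$ in the lower half-space I would reflect: pairing $y=(y',y_n)$ with $\hat y=(y',-y_n)$ and using $\tilde w(\hat y,\bar t)=-w(y,\bar t)\,e^{-\Lambda\bar t}+\delta$, so that $\tilde w(\bar x,\bar t)-\tilde w(\hat y,\bar t)=\tilde w(\bar x,\bar t)+w(y,\bar t)e^{-\Lambda \bar t}-\delta$; combining the contributions from $y$ and $\hat y$ and using $\abs{\bar x-\hat y}\ge\abs{\bar x-y}$ when $\bar x_n>0$, the full principal-value integral is $\le$ (the half-space integral with integrand $\tilde w(\bar x,\bar t)-\tilde w(y,\bar t)\le0$), hence $(-\Delta)^\sigma\tilde w(\bar x,\bar t)<0$ strictly unless $\tilde w(\cdot,\bar t)$ is constant, which is impossible given the behavior at infinity. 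This forces $a\,\Delta^\sigma\tilde w+b\tilde w>0\ge\tilde w_t$ at $(\bar x,\bar t)$, contradicting the strict differential inequality. In the boundary case $\bar x_n=0$, oddness gives $\tilde w(\bar x,\bar t)=0+\delta=\delta>0$, which cannot be the negative minimum — so this case does not occur (for $\delta$ small enough that $\min\tilde w<\delta$, which holds since we assumed $\min w<0$). Finally, letting $\delta\to0$ and then undoing the exponential rescaling yields $w\ge0$.

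The main obstacle I anticipate is making the nonlocal-term estimate at the minimum point fully rigorous: the principal-value integral must be manipulated carefully, splitting off a symmetric neighborhood of $\bar x$ (where the $C^2$ regularity controls the singular part and the integrand is still $\le0$ by the minimum property) from the reflection argument on the rest, and one must verify the strict inequality — that $(-\Delta)^\sigma\tilde w(\bar x,\bar t)$ is not merely $\le0$ but genuinely $<0$, since otherwise the strict differential inequality is not contradicted. This strictness comes precisely from the fact that $\tilde w$ cannot be spatially constant: its liminf at infinity is $\ge\delta>0>\min\tilde w$. A secondary point requiring care is the passage to the first touching time $\bar t$ and the justification that $\tilde w_t(\bar x,\bar t)\le 0$ there (one-sided derivative), which is standard but uses that $\tilde w\ge \min$ strictly for $t<\bar t$ near the minimum value — handled by the usual perturbation $+\eta t$ if needed. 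Everything else (the $e^{-\Lambda t}$ reduction, the attainment of the infimum, the $\delta\to0$ limit) is routine.
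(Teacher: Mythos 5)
Your argument is essentially the paper's proof: reduce to $b\le 0$ via the $e^{-\Lambda t}$ substitution, locate a negative global minimum of the transformed function, show $w_t\le 0$ and $bw\ge 0$ there, and prove the nonlocal term is strictly positive by reflecting the lower-half-space integral and invoking the oddness of $w$ together with the kernel inequality $|x_0-z|<|x_0-(z',-z_n)|$ for $x_0,z\in\R^n_+$. The extra $+\delta$ perturbation and the passage to a ``first touching time'' are superfluous, and the strictness is better attributed not to non-constancy of $\tilde w(\cdot,\bar t)$ but to the unconditional reflected term $-2w(x_0,t_0)\int_{\R^n_+}|x_0-(z',-z_n)|^{-n-2\sigma}\,\ud z>0$ arising directly from $w(x_0,t_0)<0$, which is exactly what the paper uses and which makes a strict differential inequality unnecessary.
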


 \begin{proof} Without loss of generality, we may assume $b(x,t)\leq 0$. Indeed,
if we let
 \[\tilde w(x,t)=e^{-Ct}w(x,t)
 \]
for some constant $C$,  then
 \[
 \tilde w_{t}=a(x,t)\Delta^{\sigma} \tilde w + (b(x,t)-C) \tilde w.
 \]
 Since $b$ is bounded, we can choose $C$ sufficiently large such that $b(x,t)-C\leq 0$ in $\R^n_+\times (0,T]$, and we
 only need to show that $\tilde w(x,t)\geq 0$ for all $(x,t)\in \R^n_+\times (0,T]$.

Suppose the contrary that there exists a point $(x_0,t_0)\in \R^n_+\times (0,T]$ such that
\[
0>  w(x_0,t_0).
\]
By the assumptions on $w$, we may assume $w(x_0,t_0)=\min_{\overline {\R^n_+}\times (0,T]}  w$. 
It follows that
\be\label{mon1}
w_t(x_0,t_0)\leq 0, \quad  b(x_0,t_0) w(x_0,t_0)\geq 0.
\ee
It is clear that
\[
\begin{split}
 \Delta^\sigma  w(x_0,t_0)
&=\int_{\R^n}\frac{ w(y,t_0)- w(x_0,t_0)}{|x_0-y|^{n+2\sigma}}\,\ud y\\
&=\int_{\R^n_+}\frac{ w(y,t_0)- w(x_0,t_0)}{|x_0-y|^{n+2\sigma}}\,\ud y+\int_{\R^n\setminus \R^n_+}
\frac{ w(y,t_0)- w(x_0,t_0)}{|x_0-y|^{n+2\sigma}}\,\ud y.
\end{split}
\]
By the change of variables $y=(z',-z_n)$, we obtain
\[
 \begin{split}
  &\int_{\R^n\setminus \overline{\R^n_+}}\frac{ w(y,t_0)- w(x_0,t_0)}{|x_0-y|^{n+2\sigma}}\,\ud y\\&
 =\int_{ \R^n_+}\frac{ w(z',-z_n,t_0)- w(x_0,t_0)}{|x_0-(z',-z_n)|^{n+2\sigma}}\,\ud z\\
&=-\int_{ \R^n_+}\frac{ w(z',z_n,t_0)- w(x_0,t_0)}{|x_0-(z',-z_n)|^{n+2\sigma}}\,\ud z -2w(x_0,t_0)\int_{ \R^n_+}
\frac{1}{|x_0-(z',-z_n)|^{n+2\sigma}}\,\ud z\\
&> -\int_{ \R^n_+}\frac{ w(z',z_n,t_0)- w(x_0,t_0)}{|x_0-(z',-z_n)|^{n+2\sigma}}\,\ud z.
 \end{split}
\]
where we used $ w(z',-z_n,t_0)=- w(z',z_n,t_0)$ and $ w(x_0,t_0)< 0$.
Since $(x_0, t_0)$ is a minimum point of $w$ in $\overline {\R^n_+}\times (0,T]$, the simple inequality
\[
 \frac{1}{|x_0-z|^{n+2\sigma}}>\frac{1}{|x_0-(z',-z_n)|^{n+2\sigma}},\quad  \forall\ x_0,z\in \R^n_+
\]
yields that
\be\label{mon2}
 \Delta^\sigma w(x_0,t_0)>0.
\ee
Combining \eqref{mon1} and \eqref{mon2}, we have a contradiction to \eqref{eq: linear}.
\end{proof}

\begin{lem}\label{w-s mp}
Let $w(x,t)$ be as in Lemma \ref{wmp}. Then for any fixed $t\in (0,T]$, we have $w(x,t)>0$ or $w(x,t)\equiv0$ in $\R^n_+$.
\end{lem}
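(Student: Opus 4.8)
The plan is to argue by contradiction combined with the already-proven weak maximum principle (Lemma~2.1). Suppose there is a fixed time $t_1\in(0,T]$ at which $w(\cdot,t_1)$ is neither strictly positive nor identically zero on $\R^n_+$. By Lemma~2.1 we already know $w\ge 0$ on $\R^n_+\times[0,T]$, so the failure of strict positivity means there exists $x_1\in\R^n_+$ with $w(x_1,t_1)=0$, while $w(\cdot,t_1)\not\equiv 0$. Since $(x_1,t_1)$ is then an interior spatial minimum of the nonnegative function $w$, we have $w_t(x_1,t_1)\le 0$ (using $t_1>0$; if $t_1<T$ one gets equality, but $\le 0$ is all we need) and $b(x_1,t_1)w(x_1,t_1)=0$.

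The key step is to evaluate $\Delta^\sigma w(x_1,t_1)$ exactly as in the proof of Lemma~2.1: split the integral over $\R^n_+$ and $\R^n\setminus\overline{\R^n_+}$, use the oddness $w(z',-z_n,t_1)=-w(z',z_n,t_1)$ and the elementary inequality $|x_1-z|^{-(n+2\sigma)}>|x_1-(z',-z_n)|^{-(n+2\sigma)}$ for $x_1,z\in\R^n_+$. With $w(x_1,t_1)=0$ this computation gives
\[
\Delta^\sigma w(x_1,t_1)=\int_{\R^n_+}w(z,t_1)\left(\frac{1}{|x_1-z|^{n+2\sigma}}-\frac{1}{|x_1-(z',-z_n)|^{n+2\sigma}}\right)\ud z \;\ge\; 0,
\]
since every term in the integrand is $\ge 0$. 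Plugging into the differential inequality $w_t\ge a\Delta^\sigma w+bw$ at $(x_1,t_1)$ forces $0\ge w_t(x_1,t_1)\ge a(x_1,t_1)\Delta^\sigma w(x_1,t_1)\ge 0$, hence $\Delta^\sigma w(x_1,t_1)=0$. But $a(x_1,t_1)>0$, so the nonnegative integrand above must vanish almost everywhere; since the weight $|x_1-z|^{-(n+2\sigma)}-|x_1-(z',-z_n)|^{-(n+2\sigma)}$ is strictly positive throughout $\R^n_+$, this yields $w(z,t_1)=0$ for a.e.\ $z\in\R^n_+$, and by continuity $w(\cdot,t_1)\equiv 0$ on $\R^n_+$ — contradicting our assumption.

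I expect the main (minor) obstacle to be bookkeeping: justifying that the P.V.\ integral defining $\Delta^\sigma w(x_1,t_1)$ converges and that the splitting/change of variables is legitimate. This is handled exactly as in Lemma~2.1, using $w(\cdot,t_1)\in C^2$ near $x_1$ and bounded on $\R^n$, together with the oddness (which makes the contributions from the two half-spaces combine into an absolutely convergent integral once $w(x_1,t_1)=0$ is used to cancel the singular term). One should also remark that the argument is purely at the fixed slice $t=t_1$, so no parabolic regularity beyond $C^{2,1}$ is needed, and that the case $w(x_1,t_1)=0$ with $t_1=T$ works identically since we only used $w_t(x_1,t_1)\le 0$.
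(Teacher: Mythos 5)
Your proof is correct and takes essentially the same route as the paper: use Lemma~\ref{wmp} to get $w\ge 0$, assume $w(x_1,t_1)=0$ with $w(\cdot,t_1)\not\equiv 0$, observe $w_t(x_1,t_1)\le 0$ since $w\ge 0$ on $[0,t_1]$ vanishes at $t_1$, and reuse the splitting/reflection identity from the proof of Lemma~\ref{wmp} to show $\Delta^\sigma w(x_1,t_1)\ge 0$ with equality iff $w(\cdot,t_1)\equiv 0$ on $\R^n_+$, which gives the contradiction. The paper states this more tersely (and reduces to $b\le 0$ first, a step you correctly note is unnecessary here since $w(x_1,t_1)=0$ kills the zeroth-order term), but the substance is identical; your remarks on absolute convergence of the split integral at a zero minimum and on the case $t_1=T$ are accurate.
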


\begin{proof}
As in the proof of Lemma \ref{wmp}, we may assume $b\leq 0$.
Suppose that at $w(x_0,t_0)=0$ for some $(x_0,t_0)\in \R^n_+\times (0,T] $. From the proof of Lemma \ref{wmp} we see that
\[
\Delta^{\sigma} w(x_0,t_0) \geq 0
\]
and equality holds if and only if $ w(x,t_0)= w(x_0,t_0)$ for all $x\in \R^n_+$. Therefore,
the lemma follows immediately from a simple contradiction argument.
\end{proof}

\begin{lem}\label{smp}
Let $w(x,t)$ be as in Lemma \ref{wmp}. Suppose $w(x_0,0)>0$ for some $x_0\in \R^n_+$, then for any fixed $t\in (0,T]$, we have $w(x,t)>0$ in $\R^n_+$.
\end{lem}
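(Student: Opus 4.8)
The plan is to combine the strong maximum principle (Lemma \ref{w-s mp}) with a continuity/propagation argument in time, exploiting the nonlocal structure of $\Delta^\sigma$ to ``transport'' positivity instantaneously from a single point. First I would note that by Lemma \ref{wmp} we already know $w\geq 0$ on $\R^n_+\times[0,T]$, so the only thing to rule out is that $w(\cdot,t)\equiv 0$ in $\R^n_+$ for some $t\in(0,T]$; indeed by Lemma \ref{w-s mp}, for each fixed $t$ we have the dichotomy $w(\cdot,t)>0$ or $w(\cdot,t)\equiv 0$ in $\R^n_+$. So it suffices to show that the set $Z=\{t\in(0,T]:w(\cdot,t)\equiv 0\text{ in }\R^n_+\}$ is empty.

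The key observation is the following: suppose $w(\cdot,t_1)\equiv 0$ in $\R^n_+$ for some $t_1\in(0,T]$. Since $w$ is odd across $\{x_n=0\}$, this forces $w(\cdot,t_1)\equiv 0$ on all of $\R^n$. But then at any point $x\in\R^n_+$ we can compute, using \eqref{fractional laplacian expression in integral}, that $\Delta^\sigma w(x,t_1)=0$ as well, so from \eqref{eq: linear} (with $b\leq 0$ as we may assume, and $w(x,t_1)=0$) we get $w_t(x,t_1)\geq 0$. That gives monotonicity in the forward direction but not the needed sign in the backward direction. To get a genuine contradiction I would instead run the argument for the backward-in-time picture: if $w(\cdot,t_1)\equiv 0$ then I want to conclude $w(\cdot,t)\equiv 0$ for all $t\in[0,t_1]$, which contradicts $w(x_0,0)>0$. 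For this, consider $\tilde w(x,t)=w(x,t_1-t)$ on $\R^n_+\times[0,t_1)$; it satisfies a reversed parabolic inequality, so the plain minimum-principle argument of Lemma \ref{wmp} no longer applies directly. The cleaner route is to work at the spatial point $x_0$ where $w(x_0,0)>0$ and track the first time the solution can vanish there.

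Concretely, here is the argument I would carry out. Let $t^*=\inf\{t\in[0,T]:w(\cdot,t)\equiv 0\text{ in }\R^n_+\}$ (set $t^*=T$ if the set is empty, and we are then done). Suppose for contradiction $t^*<T$. By continuity of $w$ in $t$ and Lemma \ref{w-s mp}, $w(\cdot,t^*)\equiv 0$ in $\R^n_+$, hence $w(\cdot,t^*)\equiv 0$ on $\R^n$ by oddness. Since $w(x_0,0)>0$, we have $t^*>0$. Now fix any interior point, say $x_0$ itself. For $t$ slightly less than $t^*$, $w(\cdot,t)>0$ in $\R^n_+$ (by definition of $t^*$ and the dichotomy), and in particular $w>0$ on a fixed ball $B\subset\R^n_+$ away from the boundary. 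Then for $x\in B$ and $t<t^*$ close to $t^*$,
\[
\Delta^\sigma w(x,t)=\int_{\R^n}\frac{w(y,t)-w(x,t)}{|x-y|^{n+2\sigma}}\,\ud y
\geq \int_{\R^n_+\setminus B_R}\frac{-w(x,t)}{|x-y|^{n+2\sigma}}\,\ud y - (\text{bounded terms}),
\]
and because $w(x,t)\to 0$ uniformly on $B$ as $t\to t^*$ while $w(\cdot,t)\geq 0$ everywhere, one gets a lower bound $\Delta^\sigma w(x,t)\geq -\varepsilon(t)$ with $\varepsilon(t)\to 0$; combined with $w\geq 0$ this yields $w_t(x,t)\geq -\varepsilon(t)w(x,t) - |\text{correction}|$. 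The precise way to exploit this is to pick a point $\bar x\in\R^n_+$ with $w(\bar x,0)>0$ — we can take $\bar x=x_0$ — and show $w(\bar x,\cdot)$ cannot reach $0$ in finite time: set $m(t)=w(\bar x,t)$; along $x=\bar x$ we have $m'(t)=w_t(\bar x,t)\geq a(\bar x,t)\Delta^\sigma w(\bar x,t)+b(\bar x,t)m(t)$, and since $w\geq 0$ with $w$ odd, $\Delta^\sigma w(\bar x,t)\geq -C m(t)$ is \emph{not} automatic; rather one estimates $\Delta^\sigma w(\bar x,t)\geq -\|w(\cdot,t)\|_\infty\int_{|y|\geq 1}|y|^{-n-2\sigma}\,\ud y+(\ge 0\text{ part})$...

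The hard part will be closing this last estimate rigorously: unlike the purely local case, $\Delta^\sigma w(\bar x,t)$ is not controlled by the pointwise value $w(\bar x,t)$ alone, so a naive Gronwall argument at a single point fails, and one must carry spatial information. I would handle this exactly as in the proof of Lemma \ref{wmp}: argue by contradiction assuming $w(x_1,t_1)=0$ for some $(x_1,t_1)\in\R^n_+\times(0,T]$ with $t_1$ minimal (or $w(x_1,t_1)$ a minimum over $\overline{\R^n_+}\times[0,t_1]$, which is $0$ since $w\geq 0$), so that $w_t(x_1,t_1)\leq 0$ and $b(x_1,t_1)w(x_1,t_1)=0$; then the strict-inequality computation from Lemma \ref{wmp} — using oddness, $w\geq 0$, and the strict kernel inequality $|x_1-z|^{-n-2\sigma}>|x_1-(z',-z_n)|^{-n-2\sigma}$ — gives $\Delta^\sigma w(x_1,t_1)\geq 0$, with strict inequality unless $w(\cdot,t_1)\equiv 0$ on $\R^n_+$. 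If the inequality is strict we immediately contradict \eqref{eq: linear}. In the remaining case $w(\cdot,t_1)\equiv 0$, we use the minimality of $t_1$ (or a further backward step) together with $w(x_0,0)>0$ to reach a contradiction: since $w(\cdot,t)>0$ in $\R^n_+$ for all $t<t_1$, and $w$ is continuous, $w(\cdot,t_1)\equiv 0$ is consistent, so to finish we instead note that applying the same dichotomy at a slightly earlier time and using that $t\mapsto w(x_0,t)$ is continuous with $w(x_0,0)>0$ shows $t_1$ cannot be the first vanishing time unless $w(x_0,\cdot)$ itself hits $0$, and running the single-point inequality $w_t(x_0,t)\geq a\Delta^\sigma w(x_0,t)+bw(x_0,t)$ with the sign information $\Delta^\sigma w(x_0,t_1)>0$ (strict, since $w(\cdot,t_1)$ is not the zero function on $\R^n_+$ as $w(x_0,t)>0$ nearby forces... ) — here one really does need to be careful, and I expect the cleanest writeup mirrors Lemma \ref{smp}'s analogue in \cite{Y} or \cite{JLX1}: fix $t_0\in(0,T]$, use Lemma \ref{smp}-type reasoning to propagate $w(x_0,0)>0$ forward via the nonlocal term, concluding $w(\cdot,t)>0$ for all $t\in(0,t_0]$, hence for all $t\in(0,T]$.
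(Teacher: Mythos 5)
Your proposal correctly reduces the problem to ruling out $w(\cdot,t_1)\equiv 0$ in $\R^n_+$ for some $t_1\in(0,T]$, and you correctly identify why the obvious approaches fail: forward-in-time monotonicity only gives $w_t\geq 0$ at such a $t_1$ (the wrong sign for a contradiction); the backward-in-time picture has a reversed parabolic inequality; and a single-point Gronwall estimate is hopeless because $\Delta^\sigma w(\bar x,t)$ is not controlled by $w(\bar x,t)$ alone --- indeed the kernel's singularity at $y=\bar x$ makes any naive bound of the form $\Delta^\sigma w(\bar x,t)\geq -Cw(\bar x,t)$ false. You also correctly observe, from the computation in Lemma~\ref{wmp}, that a minimum of $w$ at $(x_1,t_1)$ with $w(x_1,t_1)=0$ gives $\Delta^\sigma w(x_1,t_1)\geq 0$ with equality exactly when $w(\cdot,t_1)\equiv 0$ on $\R^n_+$, which is precisely the hard case you cannot dismiss. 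At this point, however, your argument stalls: your closing sentences acknowledge the gap and propose, circularly, to ``use Lemma \ref{smp}-type reasoning to propagate $w(x_0,0)>0$ forward,'' which is the statement you are trying to prove.

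What is missing is a quantitative barrier. The paper's proof (following Nirenberg's parabolic strong maximum principle) constructs an explicit subsolution $H(x,t)$, odd across $\{x_n=0\}$, of the form $h(x,t)=(t_1-t_*)^2-|x-e_n|^2-(t-t_*)^2$ near $e_n$, with $t_*$ chosen so negative that $H_t< a\Delta^\sigma H+bH$ on $B_{1/2}(e_n)\times[\bar t,t_1]$. Since $w>0$ on $\R^n_+\times(t_2,t_1)$ (by Lemma~\ref{w-s mp} applied on this shorter interval), one can find $\varepsilon>0$ with $w(\cdot,\bar t)\geq\varepsilon H(\cdot,\bar t)$, and then the contradiction argument of Lemma~\ref{wmp} shows $w\geq\varepsilon H$ on $B_{1/2}(e_n)\times[\bar t,t_1]$. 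Because $H(e_n,t_1)=0=w(e_n,t_1)$ and $H_t(e_n,t_1)=-2(t_1-t_*)<0$, this yields $w_t(e_n,t_1)\leq\varepsilon H_t(e_n,t_1)<0$, which contradicts the inequality $w_t(e_n,t_1)\geq a\Delta^\sigma w(e_n,t_1)+bw(e_n,t_1)=0$ that holds since $w(\cdot,t_1)\equiv 0$ on all of $\R^n$. This is the propagation mechanism your proposal searches for but does not supply: it trades the uncontrollable pointwise nonlocal term for a comparison with an explicit profile whose time-derivative has a definite sign at the touching point.
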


\begin{proof}
The proof follows from that of the parabolic strong maximum principle in \cite{N}, with suitable modifications for nonlocal equations. As before, we assume $b\leq 0$. Suppose that for some $t_1>0$, $w(\cdot, t_1)$ is zero at some point. It follows from Lemma \ref{w-s mp} that $ w(x, t_1)\equiv 0$. By the assumption on $w(\cdot, 0)$ and Lemma \ref{w-s mp}, we may assume that $w(x,t)>0$ for all $(x,t)\in \R^n_+\times (t_2,t_1)$ for some $t_2>0$.

Let $h(x,t)=(t_1-t_*)^2-|x-e_n|^2-(t-t_*)^2$ if $0\leq x_n\leq 2$; and $h=0$ if $x_n> 2$, where $e_n=(0',1)$ and $t_*$ will be fixed later.
Set
\[
H(x,t)=\left\{
             \begin{array}{ll}
                h(x,t), & x\in \overline{\R^n_+}, \\
                -h(x',-x_n,t), & x\in\R^n\setminus \overline{\R^n_+}. \\
             \end{array}
           \right.
\]
Let $\bar t\in (t_2, t_1)$ be such that $(t_1-t_*)^2-(t-t_*)^2\leq \frac{1}{4}$ holds for $t\geq \bar t$.
It is easy to see that there exists a positive constant $C_1$ independent of $t^*$ such that for any $(x,t)\in B_{1/2}(e_n)\times [\bar t, t_1],$
\[
(-\Delta)^{\sigma}H(x,t)\leq C_1.
\]
Thus, we can choose $t_*$ so negative that for any $(x,t)\in B_{1/2}(e_n)\times [\bar t, t_1],$
\be\label{h}
H_t(x,t)=2(t_*-t)\leq 2(t_*-t_2)< a(x,t)\Delta^{\sigma}H(x,t)+b(x,t)H(x,t).
\ee
Let $\va>0$ be a sufficiently small constant such that $ w(x,\bar t)\geq \va H(x,\bar t)$ for all $x\in \R^n_+$.
We claim that $w(x,t)\geq\va H(x,t)$ in $\R^n_+\times (\bar t, t_1)$.

If not, then the (negative) minimal value of $\bar w:=w-\va H$ in $\R^n_+\times (\bar t,t_1)$ must be achieved in
$B_{1/2}(e_n)\times (\bar t, t_1)$, say at $(x_0,t_0)$. Note that $\bar w(x',-x_n,t)=-\bar w(x',x_n,t)$. Hence, by exactly the same argument in the proof of Lemma \ref{wmp}
\[
\pa_t  \bar  w(x_0,t_0)\leq 0,\quad \Delta^\sigma\bar w(x_0,t_0)>0.
\]
Together with \eqref{h} and $b\leq 0$, we conclude that
\[
 w_t(x_0,t_0)<a(x_0,t_0)\Delta^\sigma w(x_0,t_0)+b(x_0,t_0)w,
\]which contradicts \eqref{eq: linear}.

Hence, it follows from the above claim that $w_t(t_1, e_n)\leq -2\va(t_1-t_*)<0$. But $w(x,t_1)\equiv 0$. These contradict \eqref{eq: linear}.
\end{proof}

\begin{lem}\label{lemtf}
 Let
\[
 h(x)=\begin{cases}
       x_n(1-|x'|^2), &\quad |x_n|<1,|x'|<1,\\
      0,&\quad \mbox{otherwise}.
      \end{cases}
\]
Then there exists a positive constant $c_0$ depending only $n,\sigma$ such that
\be\label{ITF}
 \Delta^\sigma h(x)\geq -c_0 x_n,
\ee
for all $x=(x',x_n)$ with $|x'|<1, 0\leq x_n<1/8$.
\end{lem}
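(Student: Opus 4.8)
The plan is to compute $\Delta^\sigma h(x)$ directly from the singular integral representation \eqref{fractional laplacian expression in integral} and extract the leading behavior as $x_n \to 0^+$, showing it is bounded below by $-c_0 x_n$. First I would exploit the structure of $h$: it is odd in $x_n$ and even in $x'$, and for the points of interest $x = (x',x_n)$ with $|x'| < 1$ and $0 \le x_n < 1/8$ we have $h(x) = x_n(1-|x'|^2) \ge 0$. I would split $\R^n = \R^n_+ \cup (\R^n \setminus \overline{\R^n_+})$ in the integral and use the reflection $y = (z', -z_n)$ together with $h(z',-z_n) = -h(z',z_n)$, exactly as in the proof of Lemma \ref{wmp}, to write
\[
\Delta^\sigma h(x) = \int_{\R^n_+} \big(h(x) - h(z)\big)\left(\frac{1}{|x-z|^{n+2\sigma}} - \frac{1}{|x-(z',-z_n)|^{n+2\sigma}}\right) \ud z - 2 h(x) \int_{\R^n_+} \frac{\ud z}{|x-(z',-z_n)|^{n+2\sigma}}.
\]
The second term is $\ge -C x_n$ on the region in question, since $h(x) = x_n(1-|x'|^2) \le x_n$ and the remaining integral is bounded uniformly for $x$ in a compact subset of $\overline{\R^n_+}$ (and in fact comparable to $x_n^{-2\sigma}$, but one only needs an upper bound that, multiplied by $h(x) \lesssim x_n$, stays controlled — more carefully, $\int_{\R^n_+}|x-(z',-z_n)|^{-n-2\sigma}\ud z \sim c\, x_n^{-2\sigma}$, so this term is $\sim -c\, x_n^{1-2\sigma}$; for $\sigma < 1/2$ this is already $o(x_n)$-dominated trivially, while for general $\sigma$ one absorbs it by the positive contribution below, so the decomposition must be handled so that the genuinely negative pieces are all of order at least $x_n^{1-2\sigma}$ and collected together).

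The heart of the matter is the first term. I would further decompose it into an inner part $|z - x| < \rho$ for a fixed small $\rho$ (say $\rho = 1/4$, chosen so that the ball stays inside the support region $\{|x'|<1, |x_n|<1\}$ whenever $|x'|$ is bounded away from $1$) and an outer part. On the outer part $h$ is bounded, the kernel difference is integrable and $O(x_n/|x-z|^{n+2\sigma+1})$ by the mean value theorem applied in the $z_n$ variable (the kernel $\frac{1}{|x-z|^{n+2\sigma}} - \frac{1}{|x-\bar z|^{n+2\sigma}}$ where $\bar z$ is the reflection vanishes to first order in $x_n$), so that contributes $O(x_n)$. On the inner part, $h$ is smooth (it equals $z_n(1-|z'|^2)$ there), so I would Taylor expand $h$ around $x$; the even-order terms in the expansion pair against the odd kernel difference and the first-order terms give the principal-value-type cancellation, leaving a remainder controlled by $\|D^2 h\|$ times $\int_{|w|<\rho} |w|^2 \cdot x_n |w|^{-n-2\sigma-1} \ud w$, which converges (for $\sigma<1$, $2 - (2\sigma+1) = 1-2\sigma > -1$) and is again $O(x_n)$. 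Crucially, because $h(x) \ge 0$ and $h$ vanishes outside the unit cube while being nonnegative on $\R^n_+ \cap \{x_n > 0\}$... wait — $h$ is NOT nonnegative on all of $\R^n_+$; it is negative for $|x'| > 1$. So the sign bookkeeping must be done honestly: one keeps track that the dangerous negative contributions come only from the region where $h(z) < 0$ (i.e. $|z'|>1$, $z_n>0$) or from the $-2h(x)\int\cdots$ term, and all such contributions are shown to be $\ge -C x_n^{1-2\sigma}$...

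Here is the cleaner route I would actually take, avoiding sign subtleties: since we only want a lower bound, write $\Delta^\sigma h(x) = c_{n,-\sigma}\,\mathrm{P.V.}\int_{\R^n} (h(x)-h(y))|x-y|^{-n-2\sigma}\ud y$ and split into $|y-x|<1/8$ and $|y-x|\ge 1/8$. For the near part, with $|x'|<1$ and $0\le x_n<1/8$: if additionally $|x'|$ is, say, $\le 7/8$ the ball of radius $1/8$ about $x$ lies in $\{|y'|<1\}$ but may cross $\{y_n = 0\}$; there $h$ is Lipschitz and piecewise smooth, and $h \ge -\|\nabla h\|_\infty \cdot \mathrm{dist} \ge -C$, but more to the point $h(x) - h(y) \le C|x-y|$ and the second difference is controlled, so one gets near part $\ge -C x_n^{1-2\sigma} \ge -c_0 x_n$ when... no, $x_n^{1-2\sigma}$ is not $\le C x_n$ near $0$ for $\sigma>1/2$.

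The genuine mechanism must therefore be: $h$ restricted to $\R^n_+$, extended oddly, has the property that $\Delta^\sigma h(x)/x_n \to $ a finite limit as $x_n \to 0$, which one sees by writing $h(x) = x_n \eta(x) + O(x_n^3)$ with $\eta(x') = 1-|x'|^2$ near the support, and using that $\Delta^\sigma$ of an odd function behaves like $x_n$ times a $\sigma$-type operator acting on the "half-space trace data." I would make this rigorous by the substitution capturing that $\frac{1}{|x-y|^{n+2\sigma}} - \frac{1}{|x-\bar y|^{n+2\sigma}} = -\frac{\partial}{\partial y_n}\big(\text{something}\big)\cdot(\text{sign})$ and integrating by parts in $y_n$ once, which lowers the singularity and makes the $x_n \to 0$ limit manifestly finite and continuous in $x'$; compactness of $\{|x'| \le 1-\delta\}$ then gives a uniform bound $\Delta^\sigma h(x) \ge -c_0 x_n$ there, and near $|x'| = 1$ one checks directly that $h(x) \le x_n(1-|x'|^2)$ is small and the positive part dominates. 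The main obstacle is exactly this: organizing the integral-by-parts / Taylor-cancellation so that every term is either nonnegative or bounded by $C x_n$ (not merely $C x_n^{1-2\sigma}$) uniformly for $|x'|<1$, handling the corner of $\partial(\text{supp } h)$ and the crossing of $\{y_n=0\}$ carefully; once the "$O(x_n)$, uniform in $x'$" bound is in hand the lemma is immediate.
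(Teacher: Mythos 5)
Your proposal has a genuine gap. The reflection across $\{z_n=0\}$ is a natural first move, but as you yourself observe, it produces the term
\[
-2h(x)\int_{\R^n_+}\frac{\ud z}{|x-(z',-z_n)|^{n+2\sigma}} \sim -\frac{c_n}{\sigma}\,(1-|x'|^2)\,x_n^{1-2\sigma},
\]
which for $\sigma>1/2$ is much more negative than $-c_0 x_n$ as $x_n\to 0^+$. For the lemma to hold, this must be cancelled by an equal-and-opposite $x_n^{1-2\sigma}$ contribution hidden inside the first integral $\int_{\R^n_+}\big(h(z)-h(x)\big)\bigl(|x-z|^{-n-2\sigma}-|x-\bar z|^{-n-2\sigma}\bigr)\ud z$. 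Your analysis of that first integral (splitting into inner/outer parts, Taylor expansion, mean value bound on the kernel difference) concludes it is $O(x_n)$; but it is not: the linear term $\partial_{x_n}h(x)\,w_n$ paired against the $w_n$-part of the kernel difference also yields $+c\,(1-|x'|^2)\,x_n^{1-2\sigma}+O(x_n)$, and extracting that coefficient exactly so that the two $x_n^{1-2\sigma}$ pieces cancel — uniformly in $|x'|<1$, with control at the corner of $\partial(\mathrm{supp}\,h)$ — is the entire content of the lemma. You correctly name this as "the main obstacle" but do not overcome it; as written the sketch asserts a cancellation it does not verify, and two of the intermediate "$O(x_n)$" claims are false as stated.

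The paper avoids this delicate cancellation altogether by a different decomposition of the integrand, not of the domain. Writing $a=(a',a_n)$ for the evaluation point, it uses
\[
x_n(1-|x'|^2)-a_n(1-|a'|^2) = (x_n-a_n)(1-|x'|^2) + a_n\big(|a'|^2-|x'|^2\big),
\]
which factors $a_n$ out of the second summand \emph{before} integrating. The integral of the first piece ($I_1$) is bounded below by $-Ca_n$ using near-odd symmetry of $(x_n-a_n)$ around $x_n=a_n$ (not around $x_n=0$), where the asymmetric boundary layer has width $O(a_n)$; the remaining pieces all carry an explicit prefactor $a_n$, so it suffices to bound $I_2$ and the outer integral $II$ by constants, which reduces to a concrete geometric analysis near $\partial B'(0,1)$. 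This sidesteps any $x_n^{1-2\sigma}$ singularity and never requires matching divergent terms. If you want to pursue your reflection route, the missing step is an exact computation of $\int_{|w|<\rho} w_n\bigl(|w|^{-n-2\sigma}-(|w'|^2+(2x_n+w_n)^2)^{-(n+2\sigma)/2}\bigr)\ud w$ to leading order in $x_n$, matched against the second term with the correct constant — doable but considerably more delicate than the paper's algebraic split. (Also note a sign slip: your first identity should read $h(z)-h(x)$, not $h(x)-h(z)$.)
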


\begin{proof}
The lemma follows from rather involved calculations. By rotating the first $(n-1)$ coordinates, we only need to show \eqref{ITF} at point
$a=(a_1,0,\cdots, 0,a_n)$ with $0\leq a_1<1$, $0\leq a_n<1/8$.

Denote $B'(x',R)\subset \R^{n-1}$ be the ball centered at $x'$ with radius $R$, $\om=B'(0,1)\times (-1,1)$. In the following $C$ will be denoted as various positive constants which 
depend only on dimension $n$ and $\sigma$.

It follows from \eqref{fractional laplacian expression in integral} that
 \begin{equation}\label{eq:lemtf-1}
  \begin{split}
   \Delta^{\sigma}h(a)
   &=\int_{\R^n}\frac{h(x)-h(a)}{|x-a|^{n+2\sigma}}\ud x\\
   &=\int_{\Omega}\frac{x_n(1-|x'|^2)-a_n(1-|a'|^2)}{|x-a|^{n+2\sigma}}\ud x-\int_{\Omega^c}\frac{a_n(1-|a'|^2)}{|x-a|^{n+2\sigma}}\ud x\\
   &=:I-a_n II.
  \end{split}
 \end{equation}
Since $x_n(1-|x'|^2)-a_n(1-|a'|^2)=(x_n-a_n)(1-|x'|^2)+a_n(|a'|^2-|x'|^2)$, we divide the integral $I$ into
\[
 I_1:=\int_{\Omega}\frac{(x_n-a_n)(1-|x'|^2)}{|x-a|^{n+2\sigma}}\ud x
\]
and
\[
 a_n I_2:=a_n\int_{\Omega}\frac{(|a'|^2-|x'|^2)}{|x-a|^{n+2\sigma}}\ud x.
\]
By symmetry and that $0\leq a_n<1/8$,
\be\label{eq:lemtf-2}
\begin{split}
  I_1&=\int_{-1}^{-1+2a_n}\int_{|x'|<1}\frac{(x_n-a_n)(1-|x'|^2)}{|x-a|^{n+2\sigma}}\,\ud x'\ud x_n\\
     &\geq -C a_n.
 \end{split}
\ee
Using $|a'|^2-|x'|^2=-|x'-a'|^2+2a'(a'-x')$, we write
\[
 \begin{split}
  I_2&=\int_{\Omega}\frac{-|x'-a'|^2}{|x-a|^{n+2\sigma}}\ud x+\int_{\Omega}\frac{2a'\cdot(a'-x')}{|x-a|^{n+2\sigma}}\ud x\\
&=:I_3+I_4.
 \end{split}
\]
Direct computations give
\be\label{eq:lemtf-3}
  \begin{split}
  I_3&\geq -\int_{-2+a_n}^{2+a_n} \ud x_n \int_{|x'-a'|<2}\frac{-|x'-a'|^2}{|x-a|^{n+2\sigma}}\ud x'\\
&=-2\lim_{b\to 0^+}\int_{b}^{2} \ud y\int_{0}^{2}\frac{r^2 r^{n-2}}{(r^2+y^2)^{\frac{n+2\sigma}{2}}} \ud r\\
&=-2\lim_{b\to 0^+}\int_{b}^{2} y^{1-2\sigma} \ud y\int_{0}^{2/y}\frac{r^n}{(1+r^2)^{\frac{n+2\sigma}{2}}} \ud r\\
&=-2\lim_{b\to 0^+}\int_{b}^{2} y^{1-2\sigma} \ud y\left(\int_{1}^{2/y}\frac{r^n}{(1+r^2)^{\frac{n+2\sigma}{2}}}\ud r+\int_{0}^{1}\frac{r^n}{(1+r^2)^{\frac{n+2\sigma}{2}}} \ud r\right)\\
&\geq -2\lim_{b\to 0^+}\int_{b}^{2} y^{1-2\sigma} \ud y\left(\int_{1}^{2/y}r^{-2\sigma} \ud r +1\right)\\
&\geq -C.
 \end{split}
\ee

Next, we are going to show
\be\label{I4II} I_4-II\geq -C.
\ee
Let $D_0=(B'(0,1)\cap B'(2a', 1))$.  Since $a'=(a_1,0,\cdots,0)$, it follows from symmetry that
\[
 \int_{D_0\times (-1,1)}\frac{2a'\cdot(a'-x')}{|x-a|^{n+2\sigma}}\,\ud x'\ud x_n=0.
\]
Thus,
\[
 I_4=\int_{(B'(0,1)\setminus D_0)\times (-1,1)}\frac{2a'\cdot(a'-x')}{|x-a|^{n+2\sigma}}>0.
\]
Now we have two cases:

\medskip

\noindent \emph{Case 1.} if $|a'|\leq\frac{\sqrt{2}}{2}$, then it is easy to see that $II<C$ (the denominator is uniformly bounded). Hence, \eqref{I4II} holds.

\medskip

\noindent \emph{Case 2.} Suppose $|a'|>\frac{\sqrt{2}}{2}$. We have
\[
\begin{split}
 II&=\int_{\Omega^c\cap(B'(a',|a'|)\times (-1,1))}\frac{1-|a'|^2}{|x-a|^{n+2\sigma}}+\int_{\Omega^c\backslash (B'(a',|a'|)\times (-1,1))}\frac{1-|a'|^2}{|x-a|^{n+2\sigma}}\\
&\leq \int_{\Omega^c\cap(B'(a',|a'|)\times (-1,1))}\frac{(1-|a'|^2)}{|x-a|^{n+2\sigma}}+C\\
&=: II_1+C.
\end{split}
\]

Denote $D_1=\Big(B'(a', \sqrt{1-|a'|^2})\cap\{x_1<a_1\}\Big)\setminus D_0$, and $D_2=\Big(B'(a', \sqrt{1-|a'|^2})\cap\{x_1>a_1\}\Big)\setminus D_0$.

Note that for $x\in D_1$, we have $2|a'|(|a'|-x_1)\geq 1-|a'|^2-|x'-a'|^2.$
Therefore,
\[
 \int_{D_1\times (-1,1)}\frac{2a'\cdot(a'-x')}{|x-a|^{n+2\sigma}}- \int_{D_2\times (-1,1))}\frac{1-|a'|^2}{|x-a|^{n+2\sigma}}
\geq \int_{D_1\times (-1,1)}\frac{-|x'-a'|^2}{|x-a|^{n+2\sigma}}.
\]
Observe that there exists a positive integer $m$, which depends only on $n$ and $\sigma$, such that
\[
\begin{split}
 & m\int_{\Big(B'(0',1)\backslash B'(a', \sqrt{1-|a'|^2})\Big)\times (-1,1)}\frac{1-|a'|^2}{|x-a|^{n+2\sigma}} \\
& \geq \int_{\Big(B(a',|a'|)\backslash\big(B'(0',1)\cup B'(a', \sqrt{1-|a'|^2})\big)\Big)\times (-1,1)}\frac{1-|a'|^2}{|x-a|^{n+2\sigma}}.
\end{split}
\]
Also notice that for any $x\in B'(0',1)\backslash B'(a', \sqrt{1-|a'|^2})$, we have
\[
0\geq m(1-|a'|^2-|x'-a'|^2).
\]
Hence,
\[
\begin{split}
 & m\int_{\Big(B'(0',1)\backslash B'(a', \sqrt{1-|a'|^2})\Big)\times (-1,1)}\frac{|x'-a'|^2}{|x-a|^{n+2\sigma}} \\
& \geq \int_{\Big(B(a',|a'|)\backslash\big(B'(0',1)\cup B'(a', \sqrt{1-|a'|^2})\big)\Big)\times (-1,1)}\frac{1-|a'|^2}{|x-a|^{n+2\sigma}}.
\end{split}
\]
It follows that
\[
 \begin{split}
  &I_4-II\\
&\geq -C+\int_{D_1\times (-1,1)}\frac{2a'\cdot(a'-x')}{|x-a|^{n+2\sigma}}- \int_{D_2\times (-1,1)}\frac{1-|a'|^2}{|x-a|^{n+2\sigma}}\\
&\quad -\int_{\Big(B(a',|a'|)\backslash\big(B'(0',1)\cup B'(a', \sqrt{1-|a'|^2})\big)\Big)\times (-1,1)}\frac{1-|a'|^2}{|x-a|^{n+2\sigma}}\\
&\geq -C-m\int_{(B'(0',1)\backslash B'(a', \sqrt{1-|a'|^2}))\times (-1,1)}\frac{|x'-a'|^2}{|x-a|^{n+2\sigma}}+\int_{D_1\times (-1,1)}\frac{-|x'-a'|^2}{|x-a|^{n+2\sigma}}\\
&\geq -C-(m+1)I_3\\
&\geq -C.
 \end{split}
\]
 Therefore, \eqref{I4II} holds.

Finally, Lemma \ref{lemtf} follows from \eqref{eq:lemtf-1}, \eqref{eq:lemtf-2}, \eqref{eq:lemtf-3} and \eqref{I4II}.
\end{proof}

\begin{lem}\label{hopf}
Let $w(x,t)$ be as in Lemma \ref{wmp}. Suppose $w(x_0,0)>0$ for some $x_0\in \R^n_+$, then for any fixed $t\in (0,T]$, we have $\pa_{x_n}w(x',0,t)>0$.
\end{lem}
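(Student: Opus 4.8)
The plan is to use the odd function $h$ from Lemma \ref{lemtf} as a barrier, rescaled and translated to sit near the boundary point under consideration. First I would fix $t_1 \in (0,T]$ and a point $x^0 = (x^0{}', 0)$ on $\partial\R^n_+$; by Lemma \ref{smp}, $w(\cdot, t)$ is strictly positive in $\R^n_+$ for every $t \in (0, T]$, so in particular on the interior of any small half-ball touching $\partial\R^n_+$ at $x^0$. To prove $\partial_{x_n} w(x^0{}', 0, t_1) > 0$, it suffices to bound $w$ from below near $x^0$ by $\va h_{R, x^0}(x)$ for small $\va > 0$, where $h_{R, x^0}(x) = h((x - (x^0{}', 0))/R)$ is the barrier of Lemma \ref{lemtf} rescaled to a half-ball of radius $R$ and centered (in the $x'$-variables) at $x^0{}'$; since $\partial_{x_n} h_{R,x^0}(x^0{}', 0) = 1/R > 0$, this lower bound forces $\partial_{x_n} w(x^0{}', 0, t_1) \ge \va/R > 0$.

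The key steps, in order: (1) Pick a small time interval $[\bar t, t_1]$ with $\bar t > 0$ and a radius $R$ small enough that the scaled barrier's $\sigma$-curvature estimate $\Delta^\sigma h_{R, x^0}(x) \ge -c_0 R^{-1-2\sigma}(x_n)$ holds, following from Lemma \ref{lemtf} by scaling. (2) Since $w(\cdot, \bar t) > 0$ in $\R^n_+$ and is continuous, choose $\va > 0$ small so that $w(x, \bar t) \ge \va h_{R, x^0}(x)$ on all of $\R^n_+$ (this uses that $h_{R,x^0}$ is supported in a compact half-ball where $w(\cdot, \bar t)$ has a positive lower bound, and is nonpositive outside $\R^n_+$). (3) Form $\bar w = w - \va h_{R, x^0}$, which is odd in $x_n$ and satisfies $\bar w(\cdot, \bar t) \ge 0$; check that on $\R^n_+ \times (\bar t, t_1]$ the function $\bar w$ satisfies a differential inequality of the form $\bar w_t \ge a \Delta^\sigma \bar w + b \bar w$ — here one uses that $\va$ can be taken small enough (and $R$ small enough, so that $R^{-1-2\sigma}$ dominates) to absorb the error terms $\va(h_{R,x^0})_t$, $\va a \Delta^\sigma h_{R,x^0}$, $\va b h_{R,x^0}$ against the strict positivity built into the barrier, exactly as in the proof of Lemma \ref{smp}. (4) Apply Lemma \ref{wmp} (or rerun its argument) to conclude $\bar w \ge 0$ on $\R^n_+ \times [\bar t, t_1]$, i.e. $w \ge \va h_{R, x^0}$ there. (5) Differentiate in $x_n$ at $(x^0{}', 0, t_1)$: since $w = \va h_{R,x^0}$ both vanish there and $w \ge \va h_{R,x^0}$ for $x_n > 0$, the one-sided derivative satisfies $\partial_{x_n} w(x^0{}', 0, t_1) \ge \va \partial_{x_n} h_{R,x^0}(x^0{}', 0) = \va/R > 0$.

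The main obstacle is step (3): verifying that the barrier genuinely produces a strict subsolution after subtraction, which requires balancing three small quantities — the time derivative of the barrier, its nonlocal Laplacian (controlled only from below, with a blow-up constant $c_0 R^{-1-2\sigma}$), and the zeroth-order term. The saving feature is that $b$ can be assumed $\le 0$ after the exponential substitution $\tilde w = e^{-Ct} w$ (as in Lemma \ref{wmp}), and that the barrier, being compactly supported and smooth in the interior of its support, has bounded $\Delta^\sigma$ there; the delicate point is near the lateral boundary $|x'| \to 1$ and near $x_n \to 0$, where Lemma \ref{lemtf} is precisely what is needed. One should also note a subtlety: Lemma \ref{lemtf} only gives the estimate for $0 \le x_n < 1/8$, so $R$ and the half-ball must be chosen so that the relevant region lies within this range; outside it one uses interior smoothness and boundedness of $\Delta^\sigma h$ directly. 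Finally, I would remark that once the Hopf inequality holds at one boundary point it holds at all of them with uniform constants on compact subsets, by translation invariance in $x'$ and a covering argument, though for the statement as written a single point suffices.
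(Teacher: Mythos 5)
Your proposal has two genuine gaps, both of which the paper's actual construction is designed to avoid.

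The first and central gap is in step (3). Lemma \ref{lemtf} only provides the \emph{one-sided} bound $\Delta^\sigma h(x)\geq -c_0 x_n$, i.e.\ a negative lower bound, and $h$ is time-independent. Therefore $h$ is a \emph{supersolution}, not a \emph{subsolution}, of the linear equation: after assuming $b\leq 0$ one finds, for $\bar w = w - \va h_{R,x^0}$,
\[
\bar w_t - a\Delta^\sigma \bar w - b\bar w \;\geq\; \va\bigl(a\Delta^\sigma h_{R,x^0} + b\, h_{R,x^0}\bigr),
\]
and both terms on the right are $\leq 0$ near the boundary (the first because $\Delta^\sigma h_{R,x^0}\geq -c_0 R^{-1-2\sigma}x_n$ is negative, the second because $b\leq 0$ while $h_{R,x^0}>0$ in $\R^n_+$). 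There is no "strict positivity built into the barrier" to absorb against: the inequality $\bar w_t\geq a\Delta^\sigma \bar w + b\bar w$ must hold exactly, and scaling $\va$ down scales every error term by the same factor, leaving nothing gained. Taking $R$ small makes matters strictly worse, since $-c_0 R^{-1-2\sigma}x_n$ becomes more negative. This is exactly why the paper's proof introduces the second auxiliary function $g$, supported in $B'(0,1)\times(1,2)\cup B'(0,1)\times(-2,-1)$, for which a direct computation gives the \emph{positive} bound $\Delta^\sigma g(x)\geq c_1 x_n$ on the strip $B'(0,1)\times(0,1/8)$. The barrier is then $H(x,t)=h(x)\bigl(\tfrac{t_0^2}{1+t_0^2}-(t-t_0)^2\bigr)+k\,g(x)$; with $k$ large, $\Delta^\sigma H\geq(kc_1-c_0)x_n$ overwhelms both $H_t$ (which is $O(x_n)$) and $bH$ (which is $O(x_n)$), so $H$ is a genuine subsolution and the comparison $w\geq\va H$ propagates.

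The second gap is in step (2), and it is subtly circular. You want $w(\cdot,\bar t)\geq\va h_{R,x^0}$ on $\R^n_+$, but both $w(\cdot,\bar t)$ and $h_{R,x^0}$ vanish on $\{x_n=0\}$ and grow linearly in $x_n$ near it, so this initial comparison near the boundary \emph{is} a Hopf-type lower bound on $w(\cdot,\bar t)$ — precisely what the lemma is trying to establish. The compactly supported part of $h_{R,x^0}$ away from $\partial\R^n_+$ poses no problem, but the piece touching $\{x_n=0\}$ does. The paper's choice of time factor $\phi(t)=\tfrac{t_0^2}{1+t_0^2}-(t-t_0)^2$ is deliberately arranged to vanish at the starting time $\bar t = t_0 - t_0/\sqrt{1+t_0^2}$, so that $H(\cdot,\bar t)=k\,g$, which is supported strictly inside $\R^n_+$ where $w(\cdot,\bar t)$ has a positive lower bound (by Lemma \ref{smp}). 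This cleanly sidesteps the circularity; your time-independent barrier cannot do so. In summary, both the auxiliary function $g$ and the specific vanishing-at-$\bar t$ time factor are essential to the argument, and the proposal as written would not close.
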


\begin{proof}
Let
\[
 g(x)=\begin{cases}
       -1,&\quad \mbox{in }B'(0,1)\times (-2,-1),\\
       1,&\quad \mbox{in }B'(0,1)\times (1,2),\\
       0,& \quad \mbox{otherwise},
      \end{cases}
\]
where $B'(0,1)$ denotes the $n-1$ dimensional unit ball centered at $0$. For any $x\in B'(0,1)\times (0,1/8)$, we have
\[
\begin{split}
 \Delta^\sigma g(x)&=\int_{\R^n}\frac{g(y)-g(x)}{|y-x|^{n+2\sigma}}\,\ud y\\&
=\int_{B'(0,1)\times (1,2)}\frac{1}{|y-x|^{n+2\sigma}}\,\ud y-\int_{B'(0,1)\times (-2,-1)}\frac{1}{|y-x|^{n+2\sigma}}\,\ud y\\&
=\int_{B'(0,1)\times (1,2)}\frac{1}{|y-(x',x_n)|^{n+2\sigma}}-\frac{1}{|y-(x',-x_n)|^{n+2\sigma}}\,\ud y\\&
=\int_{B'(0,1)\times (1,2)} \int_{0}^1-\frac{\ud }{\ud s}\left(\frac{1}{|y-x+2sx_ne_n|^{n+2\sigma}}\right)\,\ud s\,\ud y\\&
=(n+2\sigma)\int_{B'(0,1)\times (1,2)} \int_{0}^1 \frac{4(y_n-x_n)x_n+8sx_n^2}{|y-x+2sx_ne_n|^{n+2+2\sigma}}\,\ud s\,\ud y\\&
\geq c_1 x_n,
\end{split}
\]
where $c_1>0$ depends only on $n$ and $\sigma$.

For any fixed $t_0\in (0,T]$, define
\[
 H(x,t)=h(x)\Big(\frac{t_0^2}{1+t_0^2}-(t-t_0)^2\Big)+kg(x),
\]
where $h$ is as in Lemma \ref{lemtf}. We can choose a sufficiently large constant $k$ such that
\[
 H_t(x,t)\leq a(x,t)\Delta^\sigma H+b(x,t)H(x,t),
\]for all $x\in B'(0,1)\times (0,1/8)$ and $t\in (t_0-t_0/\sqrt{1+t_0^2},t_0]$.

It follows from Lemma \ref{smp} that $w(\cdot,t)>0$ in $\R^n_+$ for any fixed $t\in (0,T]$. Making a similar argument to the poof of Lemma \ref{smp},
we can show that there exists a small positive constant $\va$ such that
$w\geq \va H$ for all $t\in(0,t_0]$. Therefore, $\pa_{x_n} w(x',0,t_0)>0$ and Lemma \ref{hopf} follows immediately.
\end{proof}

Now we apply the above strong maximum principle and Hopf lemma to fractional Yamabe flow equations.

Suppose that $v$ is a positive smooth solution of \eqref{Y2} in $\Sn\times [0,T]$. Hence $$u(x,t)=\left(\frac{2}{1+|x|^2}\right)^{\frac{n-2\sigma}{2}} v(F(x),t)$$ satisfies \eqref{Y3}.
For a given real number $\lda$, define
\[
 \Sigma_\lda=\{x=(x',x_n):x_n\geq \lda\},
\]
and let $x^\lda=(x',2\lda -x_n)$ and $u_\lda(x,t)=u(x^\lda,t)$.
It is clear that $u_{\lda}$ also satisfies \eqref{Y3}.

\begin{prop}\label{weak maximum principle}
 Suppose that $u(x,0)-u_{\lda}(x,0)\geq 0$ in $\Sigma_\lda$, 
then for any fixed $t\in (0,T]$, we have $u(x,t)-u_\lda(x,t)\geq0$ in $\Sigma_\lda$.
\end{prop}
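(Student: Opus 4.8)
The plan is to reduce Proposition \ref{weak maximum principle} to the parabolic weak maximum principle for odd solutions, Lemma \ref{wmp}. First I would set $w(x,t) = u(x,t) - u_\lda(x,t)$ on the half-space $\Sigma_\lda$, which after the reflection $x \mapsto x^\lda$ can be recentered so that $\Sigma_\lda$ becomes $\R^n_+$ and $w$ becomes an odd function in the normal variable: indeed $w(x^\lda,t) = u_\lda(x,t) - u(x,t) = -w(x,t)$, so the hypothesis ``$w(x',-x_n,t) = -w(x',x_n,t)$'' of Lemma \ref{wmp} is automatic. The initial condition $w(x,0)\ge 0$ on $\Sigma_\lda$ is exactly the assumption of the Proposition, and the decay condition $\liminf_{|x|\to\infty}w(x,t)\ge 0$ should follow from the behavior of $u$ at infinity (recall $u(x,t) = O(|x|^{-(n-2\sigma)})$ coming from the conformal factor $|J_F|^{(n-2\sigma)/(2n)}$, so $u$ and $u_\lda$ both decay and $w\to 0$).

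The main work is deriving the linear differential inequality \eqref{eq: linear} for $w$. Both $u$ and $u_\lda$ satisfy \eqref{Y3}, i.e. $\pa_t(u^N) = \Delta^\sigma u + r_\sigma^g u^N$ (with the same time-dependent constant $r_\sigma^g$ since it is a global quantity of the flow). Subtracting the two equations gives $\pa_t(u^N - u_\lda^N) = \Delta^\sigma w + r_\sigma^g (u^N - u_\lda^N)$. Now I would use the mean value theorem twice: write $u^N - u_\lda^N = N\xi^{N-1} w$ for some $\xi$ between $u$ and $u_\lda$, so that $\pa_t(u^N-u_\lda^N) = N\xi^{N-1}\pa_t w + (\text{terms involving } \pa_t\xi)$. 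A cleaner route is to divide through: set $\phi = u^N - u_\lda^N$, note $\phi$ and $w$ have the same sign and $\phi = a(x,t)^{-1} \cdot (\text{something}) $... more precisely, since $u$ is positive and smooth and bounded away from $0$ on compact sets, $\partial_t w = \partial_t\big((\phi)^{\cdot}\big)$ relates to $\partial_t\phi$ by the chain rule: $w = $ (a smooth increasing function of) ... Actually the transparent bookkeeping is: $\partial_t\phi = \Delta^\sigma w + r_\sigma^g\phi$, and $\phi = c(x,t)\, w$ with $c(x,t) = N\int_0^1 (u_\lda + s(u-u_\lda))^{N-1}\,\ud s > 0$ continuous and positive; likewise $\partial_t\phi = c(x,t)\,\partial_t w + (\partial_t c)\,w$. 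Hence
\[
c(x,t)\,\partial_t w = \Delta^\sigma w + \big(r_\sigma^g c(x,t) - \partial_t c(x,t)\big)\, w,
\]
which is \eqref{eq: linear} with $a(x,t) = c(x,t)^{-1} > 0$ and $b(x,t) = c(x,t)^{-1}(r_\sigma^g c - \partial_t c)$, both continuous and (locally, which is all Lemma \ref{wmp} needs after checking boundedness) bounded on $\R^n_+\times[0,T]$ by smoothness and positivity of $v$ (equivalently $u$) on $\Sn\times[0,T]$. Applying Lemma \ref{wmp} then yields $w\ge 0$ on $\R^n_+\times[0,T]$, i.e. $u - u_\lda\ge 0$ on $\Sigma_\lda$.

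The step I expect to be the main obstacle is verifying the hypotheses of Lemma \ref{wmp} are genuinely met with the required uniformity: namely that $a$ and $b$ are continuous and $b$ bounded on all of $\R^n_+\times[0,T]$, not merely on compact subsets. The potential difficulty is that $c(x,t) = N\int_0^1(\cdots)^{N-1}\,\ud s$ could degenerate (tend to $0$) as $|x|\to\infty$ because $u, u_\lda\to 0$ there, making $a = 1/c$ blow up. However, Lemma \ref{wmp}'s proof only uses positivity of $a$ pointwise (to get the sign contradiction at the interior minimum) and boundedness of $b$ to absorb it into $e^{-Ct}$; one should check that the argument is unaffected by $a\to\infty$ at spatial infinity — it is, since the minimum of $w$, if negative, is attained at a finite point by the decay hypothesis, and there $a>0$ suffices. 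So the resolution is to observe that the proof of Lemma \ref{wmp} localizes the potential minimum to a compact set, on which $a$ is bounded below and $b$ bounded; alternatively one restricts first to a large ball, uses the boundary/decay data, and passes to the limit. I would state this carefully rather than invoke Lemma \ref{wmp} as a pure black box, since the original statement assumed $a$ continuous and positive on $\overline{\R^n_+}\times[0,T]$ without a uniform lower bound, which is exactly what is available here.
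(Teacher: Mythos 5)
Your argument is correct and reduces, exactly as the paper does, to Lemma \ref{wmp}: set $w = u - u_\lda$, derive a linear equation $w_t = a\,\Delta^\sigma w + b\,w$ with $a>0$ and $b$ bounded, and invoke the weak maximum principle for odd solutions after the obvious translation. The only difference is how you extract the coefficients. You factor $u^N - u_\lda^N = c(x,t)\,w$ with $c = N\int_0^1(\tau u + (1-\tau)u_\lda)^{N-1}\ud\tau$ and divide by $c$, so that $a = 1/c$ and $b = r_\sigma^g - (\pa_t c)/c$. The paper instead divides each flow equation by the respective nonlinear coefficient, subtracts, and separates $\Delta^\sigma u/(N u^{N-1}) - \Delta^\sigma u_\lda/(N u_\lda^{N-1})$ into $\Delta^\sigma w/(N u^{N-1})$ plus a zeroth-order term in $w$ by the mean value theorem, arriving at $a = 1/(N u^{N-1})$ and $b = \frac{(1-N)(-\Delta)^{\sigma}u_{\lda}}{N}\int_0^1(\tau u+(1-\tau) u_{\lda})^{-N}\ud \tau + r_{\sigma}^g/N$. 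Both decompositions do the job, but note that the boundedness of $b$ must hold on \emph{all} of $\R^n_+\times[0,T]$ (Lemma \ref{wmp} needs it globally to absorb $b$ into $e^{-Ct}$), and this is the one point where the two decompositions differ in transparency: in the paper's form the required cancellation between the decay $(-\Delta)^\sigma u_\lda = O(|x|^{-(n+2\sigma)})$ and the blowup of the mean-value integral is visible at a glance, while in your form one must check that $\pa_t c$ and $c$ share the same spatial decay $O(|x|^{-4\sigma})$, which you rely on but don't verify. Your observation that Lemma \ref{wmp} requires only pointwise positivity of $a$ (not a uniform lower bound), because the candidate negative minimum sits at a finite point thanks to the decay hypothesis, is exactly right — and it is indeed needed, since the paper's $a = 1/(N u^{N-1})$ also tends to $+\infty$ at spatial infinity, a fact the paper leaves implicit.
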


\begin{proof}
 Let $w(x,t)=u(x,t)-u_{\lda}(x,t)$. Then $w$ satisfies
 \be\label{eq:diff w}
 w_{t}=a(x,t)\Delta^{\sigma} w + b(x,t) w
 \ee
 where $a(x,t)=\frac{1}{Nu^{N-1}}$ and $b(x,t)=\frac{(1-N)(-\Delta)^{\sigma}u_{\lda}}{N}\int_0^1\frac{1}{(\tau u+(1-\tau) u_{\lda})^N}d \tau+\frac{r_{\sigma}^g}{N}$ is bounded. Note that $w(x',x_n+\lda,t)$ satisfies all the conditions in Lemma \ref{wmp}. Thus Proposition \ref{weak maximum principle} follows from Lemma \ref{wmp}.
\end{proof}

\begin{prop}\label{w-s max principle}
Assume the conditions in Proposition \ref{weak maximum principle}, then for any fixed $t\in (0,T]$, we have $u(x,t)-u_\lda(x,t)>0$ or $u(x,t)-u_\lda(x,t)\equiv0$ in $\Sigma_\lda$.
\end{prop}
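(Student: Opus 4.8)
The plan is to upgrade the weak maximum principle of Proposition~\ref{weak maximum principle} to a strong one, exactly as Lemma~\ref{w-s mp} upgrades Lemma~\ref{wmp}. Set $w(x,t)=u(x,t)-u_\lda(x,t)$ and recall from the proof of Proposition~\ref{weak maximum principle} that $w$ satisfies the linear equation \eqref{eq:diff w} with $a>0$ continuous and $b$ bounded, and that the shifted function $\widetilde w(x,t):=w(x',x_n+\lda,t)$ verifies all the hypotheses of Lemma~\ref{wmp}: it is $C^{2,1}$, bounded in $x$ for each $t$, odd in the last variable, has nonnegative liminf at spatial infinity in the closed half-space (since $u,u_\lda\to 0$ there), and is nonnegative on $\Sigma_\lda$ at $t=0$. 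In particular, by Proposition~\ref{weak maximum principle}, $\widetilde w\ge 0$ on $\R^n_+\times[0,T]$.

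First I would reduce, as in the proofs of Lemmas \ref{wmp}--\ref{smp}, to the case $b\le 0$ by the substitution $\widetilde w\mapsto e^{-Ct}\widetilde w$ for $C$ large, which preserves both the sign and the equation up to shifting $b$. Next, suppose that for some fixed $t_0\in(0,T]$ there is a point $x_1\in\R^n_+$ with $\widetilde w(x_1,t_0)=0$; since $\widetilde w\ge 0$, this is an interior spatial minimum at time $t_0$. Then I would run the pointwise computation from the proof of Lemma~\ref{wmp} at $(x_1,t_0)$: splitting $\Delta^\sigma\widetilde w(x_1,t_0)$ over $\R^n_+$ and its reflection and using oddness together with the strict inequality $|x_1-z|^{-(n+2\sigma)}>|x_1-(z',-z_n)|^{-(n+2\sigma)}$ for $x_1,z\in\R^n_+$, one gets
\[
\Delta^\sigma\widetilde w(x_1,t_0)=\int_{\R^n_+}\bigl(\widetilde w(z,t_0)-\widetilde w(x_1,t_0)\bigr)\Bigl(\tfrac{1}{|x_1-z|^{n+2\sigma}}-\tfrac{1}{|x_1-(z',-z_n)|^{n+2\sigma}}\Bigr)\,\ud z\ \ge 0,
\]
with equality if and only if $\widetilde w(\cdot,t_0)\equiv\widetilde w(x_1,t_0)=0$ on $\R^n_+$. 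This is precisely the observation recorded in the proof of Lemma~\ref{w-s mp}.

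It remains to exclude the case that $\Delta^\sigma\widetilde w(x_1,t_0)>0$ while $\widetilde w(\cdot,t_0)$ has an interior zero: at such a point $\partial_t\widetilde w(x_1,t_0)\le 0$ (spatial minimum, and the function is $\ge0$ with a zero) and $b(x_1,t_0)\widetilde w(x_1,t_0)=0$, so \eqref{eq:diff w} would force $0\ge\partial_t\widetilde w(x_1,t_0)=a\,\Delta^\sigma\widetilde w(x_1,t_0)+b\widetilde w=a\,\Delta^\sigma\widetilde w(x_1,t_0)>0$, a contradiction. Hence the equality case must hold, i.e. $\widetilde w(\cdot,t_0)\equiv 0$ on $\R^n_+$, and translating back gives $w(\cdot,t_0)\equiv 0$ on $\Sigma_\lda$. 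Therefore for each fixed $t\in(0,T]$ either $w(x,t)>0$ throughout $\Sigma_\lda$ or $w(x,t)\equiv 0$ there, which is the assertion. The proof is essentially a transcription of Lemma~\ref{w-s mp} to the moving-plane setting; the only point requiring a word of care is checking that $\widetilde w$ genuinely satisfies all hypotheses of Lemma~\ref{wmp} (boundedness and the decay of $u$, $u_\lda$ at infinity for each fixed $t$), and that at an interior zero the nonlocal term has a definite sign — but both are immediate from the structure already set up, so there is no real obstacle.
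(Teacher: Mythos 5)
Your proof is correct and takes the same route as the paper: the paper's entire proof is simply ``It follows from Proposition \ref{weak maximum principle} and Lemma \ref{w-s mp}'', and what you have written out is precisely the verification that the shifted difference $\widetilde w(x,t)=w(x',x_n+\lambda,t)$ satisfies the hypotheses of Lemma \ref{w-s mp} together with a transcription of that lemma's dichotomy back to $\Sigma_\lambda$.
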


\begin{proof}
It follows from Proposition \ref{weak maximum principle} and Lemma \ref{w-s mp}.
\end{proof}

\begin{prop}\label{strong maximum principle}
Assume the conditions in Proposition \ref{weak maximum principle}. In addition, we suppose that $u(x_0,0)-u_{\lda}(x_0,0)> 0$ for some $x_0\in \Sigma_{\lda}$,
then for any fixed $t\in (0,T]$, we have $u(x,t)-u_\lda(x,t)>0$ in $\Sigma_\lda$ and $\pa_{x_n}u(x',\lda,t)>0$.
\end{prop}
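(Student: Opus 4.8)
The plan is to combine the strict positivity statement already proved for odd solutions (Lemma~\ref{smp}) with the Hopf-type boundary estimate (Lemma~\ref{hopf}), applied to the shifted difference $w(x,t)=u(x,t)-u_\lda(x,t)$. As in the proof of Proposition~\ref{weak maximum principle}, $w$ solves the linear nonlocal parabolic equation \eqref{eq:diff w} with $a(x,t)=\frac{1}{Nu^{N-1}}$ positive and continuous and $b(x,t)$ bounded, and the reflected function $\tilde w(x',x_n,t):=w(x',x_n+\lda,t)$ is odd in $x_n$ with $\liminf_{x_n\ge 0,|x|\to\infty}\tilde w\ge 0$ (in fact $u,u_\lda\to 0$ at infinity), so $\tilde w$ satisfies all the hypotheses of Lemma~\ref{wmp} on $\R^n_+\times(0,T]$.

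First I would translate the additional hypothesis: $u(x_0,0)-u_\lda(x_0,0)>0$ at some $x_0\in\Sigma_\lda$ (with $x_0$ necessarily in the open half-space $x_n>\lda$, since on $\{x_n=\lda\}$ the difference vanishes identically) means that $\tilde w(\,\cdot\,,0)$ is strictly positive at an interior point of $\R^n_+$. Then Lemma~\ref{smp} applies directly and gives $\tilde w(x,t)>0$ for all $x\in\R^n_+$ and each fixed $t\in(0,T]$, i.e.\ $u(x,t)-u_\lda(x,t)>0$ throughout the open region $\{x_n>\lda\}$; on the boundary hyperplane the difference is still zero by the reflection symmetry. Next, with the same interior-positivity input, Lemma~\ref{hopf} yields $\pa_{x_n}\tilde w(x',0,t)>0$ for each fixed $t\in(0,T]$, which unwinds to $\pa_{x_n}w(x',\lda,t)>0$. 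Since $u_\lda(x,t)=u(x',2\lda-x_n,t)$ gives $\pa_{x_n}u_\lda(x',\lda,t)=-\pa_{x_n}u(x',\lda,t)$, we have $\pa_{x_n}w(x',\lda,t)=2\pa_{x_n}u(x',\lda,t)$, so the conclusion $\pa_{x_n}u(x',\lda,t)>0$ follows.

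There is essentially no new obstacle here: the proposition is a clean corollary of Lemmas~\ref{smp} and~\ref{hopf} once one records that the odd extension of $w$ about $x_n=\lda$ is admissible. The only points requiring a sentence of care are (i) checking the decay hypothesis $\liminf_{x_n\ge 0,|x|\to\infty}\tilde w\ge 0$ — this is immediate because both $u(\cdot,t)$ and $u_\lda(\cdot,t)$ are bounded and decay like $|x|^{-(n-2\sigma)}$ at infinity, so in fact $\tilde w\to 0$ there — and (ii) observing that the point $x_0$ where strict positivity holds at $t=0$ may be taken in the open half-space, since $w\equiv 0$ on $\{x_n=\lda\}$ forces the hypothesis to be nontrivial only there. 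With these remarks in place the proof is two lines: invoke Lemma~\ref{smp} for the interior strict inequality and Lemma~\ref{hopf} for the boundary derivative.

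\begin{proof}
Let $w(x,t)=u(x,t)-u_\lda(x,t)$, which satisfies \eqref{eq:diff w} with $a,b$ as in the proof of Proposition~\ref{weak maximum principle}. The function $\tilde w(x',x_n,t):=w(x',x_n+\lda,t)$ is odd in $x_n$, satisfies the same type of equation on $\R^n_+\times(0,T]$, and $\liminf_{x_n\ge 0,|x|\to\infty}\tilde w(x,t)\ge 0$ for each fixed $t$ because $u(\cdot,t)$ and $u_\lda(\cdot,t)$ decay at infinity. By hypothesis $u(x_0,0)-u_\lda(x_0,0)>0$ for some $x_0\in\Sigma_\lda$; since $w\equiv 0$ on $\{x_n=\lda\}$, the point $x_0$ lies in $\{x_n>\lda\}$, so $\tilde w(\,\cdot\,,0)$ is strictly positive at an interior point of $\R^n_+$. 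Lemma~\ref{smp} then gives $\tilde w(x,t)>0$ in $\R^n_+$ for each fixed $t\in(0,T]$, i.e.\ $u(x,t)-u_\lda(x,t)>0$ in $\Sigma_\lda$ (the inequality on $\{x_n=\lda\}$ being an equality). Moreover, by Lemma~\ref{hopf}, $\pa_{x_n}\tilde w(x',0,t)>0$ for each fixed $t\in(0,T]$. Since $\pa_{x_n}u_\lda(x',\lda,t)=-\pa_{x_n}u(x',\lda,t)$, we have $\pa_{x_n}\tilde w(x',0,t)=2\pa_{x_n}u(x',\lda,t)$, hence $\pa_{x_n}u(x',\lda,t)>0$.
\end{proof}
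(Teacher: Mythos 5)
Your proof is correct and follows exactly the paper's route: the paper's proof is the one‑liner ``It follows from Proposition \ref{weak maximum principle}, Lemma \ref{smp} and Lemma \ref{hopf},'' and you have simply spelled out the shift $\tilde w(x',x_n,t)=w(x',x_n+\lda,t)$, the verification of the hypotheses of Lemma \ref{wmp}, and the derivative computation $\pa_{x_n}\tilde w(x',0,t)=2\pa_{x_n}u(x',\lda,t)$. Your remark that the strict inequality can only hold off the plane $\{x_n=\lda\}$ is a fair reading of the (slightly loosely stated) conclusion, consistent with Lemma \ref{smp} being stated on the open half-space $\R^n_+$.
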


\begin{proof}
It follows from Proposition \ref{weak maximum principle}, Lemma \ref{smp} and Lemma \ref{hopf}.
\end{proof}

\section{Harnack inequality for a fractional Yamabe flow}\label{harnack}

Based on the results proved in the previous section, we are going to establish the following Harnack inequality.

\begin{thm}\label{thm:harnack} Let $v$ be a $C^{3,1}$ positive function on $\Sn\times [0,T^*)$ and satisfy
\[
\frac{\pa v^N}{\pa t}=-P_\sigma(v)+b(t)v^N,\quad \mbox{on }\Sn\times (0,T^*),
\]
where $b(t)\in C([0,T^*))$.
Then there exists a positive constant $C>0$ depending only on $n,\sigma, \inf_{\Sn} v(\cdot,0)$ and $\|v(\cdot, 0)\|_{C^{3}(\Sn)}$ such that
\[
\max_{\Sn} v(\cdot,t) \leq C \min_{\Sn} v(\cdot,t),
\]for any fixed $t\in (0,T^*)$.
\end{thm}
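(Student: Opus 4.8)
The plan is to run the method of moving planes (following R. Ye) in the moving form, using the strong maximum principle and Hopf lemma from Section \ref{max and hopf}, but now applied at \emph{every} fixed time $t$, and to extract a Harnack inequality from the resulting symmetry/monotonicity of $u(\cdot,t)$ about a family of hyperplanes. First I would pass to $\R^n$ via the stereographic projection: set $u(x,t)=\bigl(\tfrac{2}{1+|x|^2}\bigr)^{(n-2\sigma)/2}v(F(x),t)$, so that $u$ solves $\partial_t u^N=\Delta^\sigma u+b(t)u^N$ in $\R^n\times(0,T^*)$. For a unit direction $e$ and $\lambda\in\R$ write $\Sigma_\lambda^e$ for the half-space $\{x\cdot e>\lambda\}$ and $u_\lambda^e$ for the reflection of $u$ across $\partial\Sigma_\lambda^e$. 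The decay $u(x,t)=O(|x|^{-(n-2\sigma)})$ as $|x|\to\infty$ (from the conformal factor, since $v$ is smooth and bounded on $\Sn$) guarantees that for $\lambda$ large, $u-u_\lambda^e\ge 0$ on $\Sigma_\lambda^e$ at $t=0$; then Proposition \ref{weak maximum principle} propagates this to all $t$, so moving planes can be started.

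Next I would, for each fixed $t$, decrease $\lambda$ and let $\bar\lambda^e(t)$ be the critical value where $u(\cdot,t)-u_{\lambda}^e(\cdot,t)\ge 0$ on $\Sigma_\lambda^e$ fails to persist. The key dichotomy comes from Propositions \ref{w-s max principle} and \ref{strong maximum principle}: for $\lambda>\bar\lambda^e(t)$ either $u(\cdot,t)\equiv u_\lambda^e(\cdot,t)$ — which, combined with the decay at infinity, is impossible unless $u(\cdot,t)$ has compact support, hence is excluded — or $u(\cdot,t)>u_\lambda^e(\cdot,t)$ strictly on $\Sigma_\lambda^e$ together with $\partial_e u<0$ on the boundary hyperplane. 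A continuity/openness argument in $\lambda$ (the conditions in Proposition \ref{strong maximum principle} are satisfied at the critical $\bar\lambda^e(t)$ unless $u\equiv u_{\bar\lambda^e}^e$) shows that at $\lambda=\bar\lambda^e(t)$ one has $u(\cdot,t)\equiv u_{\bar\lambda^e(t)}^e(\cdot,t)$. Doing this for all directions $e$ forces $u(\cdot,t)$ to be radially symmetric and strictly decreasing about some center $x_c(t)\in\R^n$, with all the reflection hyperplanes through $x_c(t)$; equivalently, $v(\cdot,t)$ is, after a Möbius transformation, a radial function on $\Sn$ about a point $\xi(t)$, monotone along meridians. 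From monotonicity one gets quantitative control: for $x$ in a fixed ball around $x_c(t)$, $u(x,t)$ is comparable to $u(x_c(t),t)=\max_{\R^n}u(\cdot,t)$, while the reflections pin down the decay rate $u(x,t)\ge c\,|x-x_c(t)|^{-(n-2\sigma)}\max u(\cdot,t)$ for large $|x|$. Translating back to $\Sn$ through $F$ (whose Jacobian is bounded above and below on compact sets and behaves like $|x|^{-2n}$ at infinity), the two conformal factors cancel the decay and yield $\max_\Sn v(\cdot,t)\le C\min_\Sn v(\cdot,t)$ with $C$ depending only on the geometry of the moving-plane argument.

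The remaining point is that $C$ must be \emph{uniform in} $t$ and depend only on $n,\sigma,\inf_\Sn v(\cdot,0)$ and $\|v(\cdot,0)\|_{C^3(\Sn)}$. For this I would make the starting value $\lambda_0$ of the moving planes explicit: it can be chosen depending only on how fast $u(\cdot,0)$ decays and on $\sup u(\cdot,0)$, i.e. only on $\|v(\cdot,0)\|_{C^3}$ and $\inf_\Sn v(\cdot,0)$ (these bound the profile of $u(\cdot,0)$ near and away from $\mathcal N$), and crucially this $\lambda_0$ works for \emph{all} $t$ because Proposition \ref{weak maximum principle} only needs the ordering to hold at time $0$. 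Hence the critical planes $\bar\lambda^e(t)$ all lie in the fixed slab $\{|\lambda|\le\lambda_0\}$, the center $x_c(t)$ stays in the fixed ball $\{|x|\le \lambda_0\}$, and the comparability constants extracted from monotonicity are the same for every $t$. I expect the main obstacle to be precisely this uniformity bookkeeping — proving that the moving-plane procedure can be initiated at a time-independent $\lambda_0$ controlled solely by the stated quantities, and that the critical planes never escape to infinity (which again reduces to the time-independent decay estimate $u(x,t)\le C_0\bigl(1+|x|\bigr)^{-(n-2\sigma)}$ coming from $v$ being globally bounded on $\Sn$ together with the stereographic conformal factor) — rather than the symmetry argument itself, which is by now standard.
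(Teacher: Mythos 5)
Your high-level plan — stereographic projection to $\R^n$, moving planes initiated at a time-independent $\lambda_0$ controlled by the initial data, invoking Propositions~\ref{weak maximum principle}--\ref{strong maximum principle} to propagate the ordering — is the right toolkit, and your observations about where the uniformity-in-$t$ must come from (the weak maximum principle only needs the ordering at $t=0$) are on target. But the central step of your argument does not work, and it is a genuine gap, not a bookkeeping issue.

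You propose, at each fixed $t$, to decrease $\lambda$ to a critical value $\bar\lambda^e(t)$ and then argue via a dichotomy (from Propositions~\ref{w-s max principle}, \ref{strong maximum principle}) that $u(\cdot,t)\equiv u^e_{\bar\lambda^e(t)}(\cdot,t)$, concluding that $u(\cdot,t)$ is radially symmetric about some $x_c(t)$. This imports the \emph{elliptic} Gidas--Ni--Nirenberg mechanism into a setting where it is unavailable. Propositions~\ref{w-s max principle} and \ref{strong maximum principle} are parabolic: they yield a conclusion at time $t$ \emph{only under the hypothesis that the ordering holds at $t=0$}. For $\lambda<\lambda_0$ you simply do not know $u(\cdot,0)\geq u^e_\lambda(\cdot,0)$ on $\Sigma^e_\lambda$, so the strong maximum principle and Hopf lemma give you nothing, and there is no ``continuity/openness in $\lambda$'' mechanism to push past $\lambda_0$. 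There is also no elliptic equation satisfied by $u(\cdot,t)$ at a fixed time slice to which a strong maximum principle for $w=u-u^e_\lambda$ could be applied. And the conclusion you are aiming for is false: the fractional Yamabe flow certainly does not render a generic initial datum radially symmetric at any positive time; only the limit as $t\to\infty$ is (after a M\"obius transformation).

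What the paper does instead (following Ye's idea) is subtler and avoids any symmetry claim. Rather than locating a critical plane for $u(\cdot,t)$ per se, it bounds, for each $q_0\in\Sn$ and each time $s$, the quantity $y(s)=(n-2\sigma)^{-1}\,\nabla(v\circ G)(0,s)/v(q_0,s)$ extracted from the expansion of $u(\cdot,s)$ at infinity. If $y_n(\bar s)\geq\lambda_0$ for some $\bar s$, one may take $\lambda=y_n(\bar s)$, translate $u$ so that the ``center'' $y_n(\bar s)$ is at the origin, perform a Kelvin transform, and then the Hopf lemma of Proposition~\ref{strong maximum principle} forces $\pa_{x_n}$ of the transformed function to be \emph{strictly negative} at the origin — contradicting the expansion \eqref{expansion}, in which the first-order term has been killed precisely by the choice of $y$. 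Hence $|y(s)|<\lambda_0$ uniformly, which is exactly a bound on $\sup_\Sn|\nabla_{\Sn}v|/v$, and the Harnack inequality then follows by integrating along geodesics. The moving-planes machinery here only ever needs $\lambda\geq\lambda_0$, consistent with the parabolic nature of the comparison principle. If you want to repair your proof, replace the symmetry conclusion by this gradient estimate: the reflection argument is a device to bound a derivative, not to establish symmetry at positive times.
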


\begin{proof}
As mentioned in the introduction, the idea of this proof is essentially due to Ye \cite{Y}. We will show that
\[
\sup_{\Sn}\frac{\abs{\nabla_{\Sn} v}}{\abs{v}} \leq C \quad\text{   for all } s\in (0,T^*).
\]
Let $q_0\in\Sn$. Without loss of generality, we may assume that $q_0$ is the north pole. Consider the inverse of the stereographic projection from the north pole $F: \R^n\to\Sn$:
\[
F(x_1,\cdots, x_n)=\left(\frac{2x}{1+x^2},\frac{x^2-1}{x^2+1}\right).
\]
We also denote $G: \R^n\to\Sn$ as the inverse of the stereographic projection from the south pole, namely $G(x)=F(x/\abs{x}^2)$.
Let
\[
u(x,s)=\left(\frac{2}{1+\abs{x}^2}\right)^{\frac{n-2\sigma}{2}}v(F(x),s),\quad \bar u(x,s)=\left(\frac{2}{1+\abs{x}^2}\right)^{\frac{n-2\sigma}{2}}v(G(x),s).
\]
Then $u,\bar u\in C^{3,1}(\R^n\times [0,T^*))$ and both satisfy
\be\label{eq:in harnack}
\frac{\pa u^N}{\pa t}=\Delta^{\sigma}u+b(t)u^N,\quad \mbox{on }\R^n\times [0,T^*).
\ee
$u(\cdot, s)$ has a Taylor expansion ``at infinity" of the form
\[
u(x,s)=\frac{2^{(n-2\sigma)/2}}{\abs{x}^{n-2\sigma}}\left(a_0+\frac{a_i x_i}{\abs{x}^2}+\left(a_{ij}-\frac{n-2\sigma}{2}\delta_{ij}\right)\frac{x_i x_j}{\abs{x}^4}+O(\abs{x}^{-3})\right).
\]
Similarly, the partial derivatives of $u(\cdot, t)$ have Taylor expansions ``at infinity" of the form
\[
\begin{split}
\frac{\pa u}{\pa x_i}(x,s)
&=2^{\frac{n-2\sigma}{2}}\left(-\frac{n-2\sigma}{\abs x^{n-2\sigma+2}}x_i\left(a_0+\frac{a_jx_j}{\abs x^2}\right)+\frac{a_i}{\abs x^{n-2\sigma+2}}-\frac{2x_ia_jx_j}{\abs x^{n-2\sigma+4}}\right)\\
&\quad+O(\abs x^{-(n-2\sigma+3)}).
\end{split}
\]
Here
\begin{equation*}
\begin{split}
a_0(s) &= v(q_0,s),\\
a_i(s) &=\frac{\pa (v(\cdot,s)\circ G)}{\pa x_i}(0),\\
a_{ij}(s)&=\frac{\pa^2 (v(\cdot,s)\circ G)}{2\pa x_i x_j}(0).
\end{split}
\end{equation*}
Let $y_{i}(s)=(n-2\sigma)^{-1} a_i(s)/a_0(s)$, and $y(s)=(y_1(s),\cdots, y_n(s))$. Then
\begin{equation}\label{expansion}
u(x+y,s)=\frac{2^{\frac{n-2\sigma}{2}}}{\abs x^{n-2\sigma}}\left(a_0+\frac{\tilde a_{ij}x_ix_j}{\abs x^4}+o(\abs x^{-2})\right)
\end{equation}
and
\begin{equation}\label{expansion of derivative}
\frac{\pa u}{\pa x_i}(x+y,s)=-\frac{(n-2\sigma)a_0x_i}{\abs x^{n-2\sigma+2}}+O(\abs x^{-(n-2\sigma+3)}),
\end{equation}
where $\tilde a_{ij}=a_{ij}-\frac{n-2\sigma}{2}\delta_{ij}-\frac{a_ia_i}{a_0}$.
We only need to show that there exists a positive constant $C$ depending only on $n,\sigma, \inf_{\Sn} v(\cdot,0)$ and $\|v(\cdot, 0)\|_{C^{3}(\Sn)}$ such that
\[
\abs {y(s)}\leq C\quad \text{for all }0\leq s<T^*.
\]
Fix $T\in (0,T^*)$. After a rotation and a reflection, we may assume that $y_n(T)=\max_i \abs{y_i(T)}$.
From the Taylor expansions of $u$ and $\nabla u$ for $s=0$, we see that (e.g., Lemma 4.2 in \cite{GNN}) there exists a $\lda_0>0$, which depends only on $n,\sigma, \inf_{\Sn} v(\cdot,0)$ and $\|v(\cdot, 0)\|_{C^{3}(\Sn)}$, such that for any $\lda>\lda_0$,
\[
u(x,0)>u(x^{\lda},0) \quad\text{for }x_n<\lda,
\]
where $x^{\lda}=(x_1,\cdots,x_{n-1},2\lda-x_n)$.
Denote $u^{\lda}(x,s)=u(x^{\lda},s)$. By Proposition \ref{strong maximum principle}, we have
\begin{equation}\label{strictly bigger}
u(x,s)>u^{\lda}(x,s)\quad \text{for all } s\in [0,T],\ x_n<\lda,\ \lda\geq \lda_0.
\end{equation}
We claim that
\[
\max\limits_{0\leq s \leq T} y_n(s)<\lda_0.
\]
If not, there exists $\bar s\in (0,T]$ such that $y_n(\bar s)=\max_{0\leq s \leq T} y_n(s)\geq\lda_0$. Thus, we can set $\lda=y_n(\bar s)$ in \eqref{strictly bigger}, namely,
\[
u(x,s)>u^{\lda}(x,s)\quad \text{for all } s\in [0,T],\ x_n<\lda=y_n(\bar s).
\]
Let $\tilde u(x,s)= u(x+y_n(\bar s),s)$, then
\[
\tilde u(x',x_n,s)>\tilde u(x',-x_n,s) \quad \text{for all } s\in [0,T],\ x_n<0.
\]
Let $\tilde u_1 (x,s)=\frac{1}{\abs x^{n-2\sigma}}\tilde u (\frac{x}{\abs x^2},s)$. Then $\tilde u_1(x',x_n,s)$ and $\tilde u_1(x',-x_n,s)$ satisfy \eqref{eq:in harnack} and
\[
\tilde u_1(x',x_n,s)>\tilde u_1(x',-x_n,s) \quad \text{for all } s\in [0,T],\ x_n<0.
\]
By Proposition \ref{strong maximum principle},
\[
\left.\frac{\pa (\tilde u_1(x',x_n,s)-\tilde u_1(x',-x_n,s))}{\pa x_n}\right|_{(x,s)=(0,\bar s)}<0,
\]
i.e., $(\pa \tilde u_1/\pa x_n)(0,\bar s)<0$. This contradicts \eqref{expansion}. Hence, $\max\limits_{0\leq s \leq T} y_n(s)<\lda_0$, which implies $y_n(T)<\lda_0$. Since $\lda_0$ is independent of $s$, we have $\abs {y(s)}\leq \lda_0\quad \text{for all }0\leq s<T^*.$ Moreover, $\lda_0$ is independent of the choice of $q_0$, and we conclude that
\[
\sup_{\Sn}\frac{\abs{\nabla_{\Sn} v}}{\abs{v}} \leq C \quad\text{   for all } s\in (0,T^*).
\]
For each $t$, integrating the above inequality along a shortest geodesic between a maximum point and a minimum point of $v(\cdot,t)$ yields
\[
\max_{\Sn} v(\cdot,t) \leq C \min_{\Sn} v(\cdot,t),
\]
where $C$ depends only on $n,\sigma, \inf_{\Sn} v(\cdot,0)$ and $\|v(\cdot, 0)\|_{C^{3}(\Sn)}$.
\end{proof}

\section{Existence and convergence of a fractional Yamabe flow}\label{section: existence and convergence}

\label{yambe}

\subsection{Schauder estimates}
For an open set $\Omega\subset \R^n$ and $\gamma\in (0,1)$,  $C^\gamma(\om)$ denotes the standard H\"older space over $\om$, with the norm 
\[
|v|_{\gamma; \Omega}:=|v|_{0;\om}+[v]_{\gamma;\om}:=\sup_{\om}|v(\cdot)|+\sup_{x_1\neq x_2, x_1,x_2\in \om}\frac{|u(x_1)-u(x_2)|}{|x_1-x_2|^{\gamma}}.
\]
For simplicity, we use $C^{\gamma}(\om)$ to denote $C^{[\gamma],\gamma-[\gamma]}(\om)$ when $1<\gamma\notin \mathbb{N}$ (the set of positive integers), where $[\gamma]$ is the integer part of $\gamma$. 
Since the operator $\pa_t+(-\Delta)^{\sigma}$ is invariant under the scaling $(x,t)\to (c x, c^{2\sigma}t)$ with $c>0$,
we introduce the fractional parabolic distance as
\[
 \rho(X_1,X_2)=(|x_1-x_2|^2+|t_1-t_2|^{1/\sigma})^{1/2},
\]
where  $X_1=(x_1,t_2), X_2=(x_2,t_2)\in \R^{n+1}$. For a measurable function $u$ defined in a Borel set $Q\subset \R^{n+1}$ and $0<\al<\min(1,2\sigma)$, we define
\[
[u]_{\al, \frac{\al}{2\sigma};Q}=\sup_{X_1\neq X_2, X_1,X_2\in Q}\frac{|u(X_1)-u(X_2)|}{\rho(X_1,X_2)^{\al}},
\] 
and
\[
|u|_{\al,\frac{\al}{2\sigma};Q}=|u|_{0;Q}+[u]_{\al,\frac{\al}{2\sigma};Q},
\]
where $|u|_{0;Q}=\sup_{X\in Q}|u(X)|$. We denote $C^{\al, \frac{\al}{2\sigma}}(Q)$ as the space of all measurable functions $u$ for which $ |u|_{\al, \frac{\al}{2\sigma};Q}<\infty$. Let $Q_T=\R^n\times (0,T]$, $T\in (0,\infty)$. For $2\sigma+\al\notin \mathbb{N}$ and $0<\al<\min(1,2\sigma)$, we say $u\in \mathcal C^{2\sigma+\al,1+\frac{\al}{2\sigma}}(Q_T)$ if 
\[
[u]_{2\sigma+\al,1+\frac{\al}{2\sigma};Q_T}:=[u_t]_{\al, \frac{\al}{2\sigma};Q_T}+[(-\Delta)^{\sigma}u]_{\al, \frac{\al}{2\sigma};Q_T}<\infty
\]
and
\[
|u|_{2\sigma+\al,1+\frac{\al}{2\sigma};Q_T}:=|u|_{0;Q_T}+|u_t|_{0,Q_T}+|(-\Delta)^{\sigma}u|_{0;Q_T}+[u]_{2\sigma+\al,1+\frac{\al}{2\sigma};Q_T}<\infty.
\]
Then $\mathcal C^{2\sigma+\al,1+\frac{\al}{2\sigma}}(Q_T)$ is a Banach space equipped with the norm $|\cdot|_{2\sigma+\al,1+\frac{\al}{2\sigma};Q_T}$.

Consider the following Cauchy problem
\be\label{lpde}
\begin{cases}
 a(x,t)u_t+(-\Delta)^\sigma u+b(x,t)u=f(x,t),&\quad \mbox{in }Q_T,\\
u(x,0)=u_0(x),&\quad \mbox{in }\R^n,
\end{cases}
\ee
where $\lda^{-1}\leq a(x,t)\leq \lda$ for some constant $\lda\geq 1$.

\begin{lem}\label{heat maximum principle}
Suppose $b(x,t)$ is bounded in $Q_1$. Let $u\in \mathcal C^{2\sigma+\al,1+\frac{\al}{2\sigma}}(Q_1)$ satisfy
\[
\begin{cases}
 a(x,t)u_t+(-\Delta)^\sigma u+b(x,t)u\leq 0,&\quad \mbox{in }Q_1,\\
u(x,0)\leq 0,&\quad \mbox{in }\R^n,
\end{cases}
\]
then $u\leq 0$ in $Q_1$.
\end{lem}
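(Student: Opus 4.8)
The plan is to prove the statement by a contradiction argument combined with the strong maximum principle from Section \ref{max and hopf}, adapted to the nonlocal setting. Suppose $u\in \mathcal C^{2\sigma+\al,1+\frac{\al}{2\sigma}}(Q_1)$ satisfies the stated differential inequality and $u(x,0)\le 0$, but $\sup_{Q_1} u>0$. First I would absorb the zeroth-order term: after replacing $u$ by $\tilde u(x,t)=e^{-Ct}u(x,t)$ for a large constant $C$ (using that $b$ is bounded and $a$ is bounded below), the coefficient of $\tilde u$ becomes $\le 0$, so without loss of generality we may take $b\le 0$. Because $u\in \mathcal C^{2\sigma+\al,1+\frac{\al}{2\sigma}}(Q_1)$, in particular $u$ is bounded and continuous on $Q_1$ and $u_t,(-\Delta)^\sigma u$ are continuous; note also that $u(\cdot,t)\to 0$ uniformly as $|x|\to\infty$ need not hold, so one must be slightly careful about where the maximum is attained.

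The key step is a pointwise argument at a first time the positive maximum is (nearly) attained. Fix a small $\va>0$ and consider $u_\va(x,t)=u(x,t)-\va t$; if $\sup u>0$ then for $\va$ small enough $\sup_{Q_1} u_\va>0$ as well. Let $t_0\in(0,1]$ be the infimum of times $t$ for which $\sup_x u_\va(x,t)\ge \tfrac12\sup_{Q_1}u_\va$, and pick (or approximate, if the sup is not attained) a point $x_0$ where $u_\va(x_0,t_0)$ is close to this value; at such a point $\pa_t u_\va \ge 0$, hence $u_t(x_0,t_0)\ge \va>0$, while $u(x_0,t_0)>0$ gives $b(x_0,t_0)u(x_0,t_0)\le 0$. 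The crucial inequality is $(-\Delta)^\sigma u(x_0,t_0)\ge 0$: since $u(x_0,t_0)\ge u(y,t_0)$ for all $y$ (up to the $o(1)$ coming from the approximation of the sup), the integrand $u(x_0,t_0)-u(y,t_0)$ in \eqref{fractional laplacian expression in integral} is nonnegative, so $(-\Delta)^\sigma u(x_0,t_0)\ge 0$, i.e. $\Delta^\sigma u(x_0,t_0)\le 0$. Plugging into $a u_t+(-\Delta)^\sigma u+bu\le 0$ gives $a(x_0,t_0)\va \le a(x_0,t_0)u_t(x_0,t_0)\le -(-\Delta)^\sigma u(x_0,t_0)-b(x_0,t_0)u(x_0,t_0)\le 0$, contradicting $a\ge \lda^{-1}>0$ and $\va>0$.

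The main obstacle is the lack of compactness: the spatial supremum over $\R^n$ need not be attained, so I cannot simply say ``let $(x_0,t_0)$ be a maximum point.'' I expect to handle this by a standard approximation — working with near-maximizers and letting the approximation error tend to zero, using the continuity of $u_t$ and $(-\Delta)^\sigma u$ (part of the definition of $\mathcal C^{2\sigma+\al,1+\frac{\al}{2\sigma}}$) together with a localization/cutoff so that the competitor function decays at infinity, exactly in the spirit of the proof of Lemma \ref{wmp} and Lemma \ref{heat maximum principle}-type arguments in \cite{dQRV} or \cite{silvestre}. Alternatively, if $u$ is assumed to vanish at spatial infinity (or can be arranged to after subtracting a small multiple of a suitable supersolution), the maximum is genuinely attained and the argument above is literal. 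Once $b\le 0$ is arranged and this pointwise obstruction is dealt with, the remaining computations are routine and the conclusion $u\le 0$ in $Q_1$ follows immediately.
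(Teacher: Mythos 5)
Your pointwise idea at a spatial maximum is the right instinct, but the argument as written has a genuine gap precisely at the place you flag, and the sketch you give for closing it does not actually work. At a near-maximizer $(x_0,t_0)$ you do \emph{not} get $\pa_t u_\va(x_0,t_0)\geq -o(1)$ or $(-\Delta)^\sigma u(x_0,t_0)\geq -o(1)$: being within $\delta$ of the supremum gives no sign information on $u_t$, and in the nonlocal integral the bound $u(x_0,t_0)-u(y,t_0)\geq -\delta$ is integrated against the non-integrable kernel $|x_0-y|^{-n-2\sigma}$ near $y=x_0$, so the error is not controlled by $\delta$ alone. Continuity of $u_t$ and $(-\Delta)^\sigma u$ does not rescue this; you genuinely need a point where the maximum is attained.

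The paper closes the gap by a different mechanism, and one part of your setup is incompatible with it. Instead of normalizing to $b\le 0$, the paper normalizes to $b\ge 1$ (multiplying by $e^{Ct}$ rather than $e^{-Ct}$), then considers the compactly supported function $v=\eta_R u$ with a cutoff $\eta_R(\cdot)=\eta(\cdot/R)$. The function $v$ satisfies the inequality up to commutator errors $\langle u,\eta_R\rangle + u(-\Delta)^\sigma\eta_R$, and since $v$ is compactly supported its positive maximum over $Q_1$ \emph{is} attained, say at $(x_0,t_0)$. There $v_t\ge 0$ and $(-\Delta)^\sigma v\ge 0$, so the inequality gives $b(x_0,t_0)v(x_0,t_0)\le \sup|\langle u,\eta_R\rangle+u(-\Delta)^\sigma\eta_R|$; the normalization $b\ge 1$ is essential here, because it converts this into the quantitative bound $\sup_{B_R\times(0,1]}u\le v(x_0,t_0)\le \sup|\text{error}|$. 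The error terms tend to $0$ uniformly as $R\to\infty$ (the derivatives of $\eta_R$ scale like negative powers of $R$, and $u$ is bounded with finite H\"older seminorms), giving $u\le 0$. With your normalization $b\le 0$, the same computation only yields $b(x_0,t_0)v(x_0,t_0)\le \text{error}$, which is vacuous when $v(x_0,t_0)>0$ and $b\le 0$: there is no contradiction and no bound on $v(x_0,t_0)$. So the two changes — cutoff and the opposite sign normalization — are not cosmetic; they are exactly what makes the argument close, and neither is present in your proposal beyond a passing mention of ``localization/cutoff.''
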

\begin{proof}
Without loss of generality we may assume that $b(x,t)\geq 1$ as before. Let $\eta(x)$ be a smooth cut-off function supported in $B_2\subset \R^n$ and equal to $1$ in $B_1$. Let $\eta_{R}(\cdot)=\eta(\cdot/R)$ and $v=\eta_R u$. Then
\[
a v_t+(-\Delta)^\sigma v+b(x,t) v\leq  \langle u,\eta_R\rangle+u(-\Delta)^\sigma \eta_R,
\]
where
\be\label{fractional inner product}
 \langle u,\eta\rangle=c(n,\sigma)\int_{\R^n}\frac{(u(x,t)-u(y,t))(\eta(x,t)-\eta(y,t))}{|x-y|^{n+2\sigma}}\,\ud y.
\ee
If $u$ is positive somewhere in $Q_1$, then we can choose $R$ as large as we want such that $v$ attains its positive maximum value in $Q_1$ at $(x_0,t_0)\in B_R\times(0,1]$. It is clear that $a(x_0,t_0) v_t(x_0,t_0)+(-\Delta)^\sigma v(x_0,t_0)\geq 0$. Since $b\geq 1$, we have
\[
\sup_{B_R\times(0,1]}u\leq v(x_0,t_0)\leq \sup_{Q_1}|\langle u,\eta_R\rangle+u(-\Delta)^\sigma \eta_R|\to 0\quad\mbox{as }R\to\infty.
\]
This finishes the proof of this Lemma.
\end{proof}

\begin{prop}\label{schauder estimates} Let $0<\al<\min(1,2\sigma)$ such that $2\sigma+\al$ is not an integer. Suppose that $a(x,t)$, $b(x,t)$, $f(x,t)$ $\in C^{\al,\frac{\al}{2\sigma}}(Q_1)$ and $u_0(x)\in C^{2\sigma+\al}(\R^n)$. Then there exists a unique solution $u\in \mathcal C^{2\sigma+\al,1+\frac{\al}{2\sigma}}(Q_1)$ of \eqref{lpde}. Moreover, there exists a constant $C>0$ depending only on $n,\sigma, \lda, \al, |a|_{\al,\frac{\al}{2\sigma};Q_1}$ and $|b|_{\al,\frac{\al}{2\sigma};Q_1}$ such that 
\be\label{sch}
 |u|_{2\sigma+\al, 1+\frac{\al}{2\sigma};Q_1}\leq C(|u_0|_{2\sigma+\al; \R^n}+|f|_{\al,\frac{\al}{2\sigma};Q_1}).
\ee
\end{prop}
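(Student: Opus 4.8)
The plan is to establish the a priori estimate \eqref{sch} first and then use it, together with the method of continuity, to get existence and uniqueness. Uniqueness is immediate from Lemma \ref{heat maximum principle}: if $u_1,u_2$ are two solutions in $\mathcal C^{2\sigma+\al,1+\frac{\al}{2\sigma}}(Q_1)$, then $w=u_1-u_2$ satisfies $a w_t+(-\Delta)^\sigma w+bw=0$ with $w(\cdot,0)=0$, so applying the lemma to $\pm w$ gives $w\equiv 0$. So the heart of the matter is the a priori bound and then solvability.

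For the a priori estimate I would proceed in the standard Schauder manner, reducing to the constant-coefficient model operator $\pa_t+(-\Delta)^\sigma$ (after dividing by $a$, which is bounded between $\lda^{-1}$ and $\lda$ and lies in $C^{\al,\frac{\al}{2\sigma}}$, and absorbing the $b u$ term and the variation of $a$ into the right-hand side). The model case — sharp parabolic Schauder estimates for the fractional heat operator, i.e. $|u|_{2\sigma+\al,1+\frac{\al}{2\sigma};Q_1}\le C(|u_0|_{2\sigma+\al;\R^n}+|f|_{\al,\frac{\al}{2\sigma};Q_1})$ for $\pa_t u+(-\Delta)^\sigma u=f$, $u(\cdot,0)=u_0$ — can be obtained via the Caffarelli–Silvestre extension (turning it into a degenerate-parabolic Neumann problem in $\R^{n+1}_+$, to which classical parabolic Schauder theory in the spirit of Schauder/Krylov applies), or via explicit kernel estimates for the fractional heat semigroup $e^{-t(-\Delta)^\sigma}$ and Campanato-type characterizations of the Hölder spaces; either route is by now routine, and I would cite it. Then the passage to variable coefficients is the usual freezing-of-coefficients argument: fix a point $X_0=(x_0,t_0)\in Q_1$, write $a(X)u_t = a(X_0)u_t + (a(X)-a(X_0))u_t$, treat $(a(X)-a(X_0))u_t + b u - f$ as the new forcing term, rescale to a unit cylinder, apply the model estimate, and use an interpolation inequality to absorb the intermediate-norm contributions, together with a covering/iteration argument to pass from small cylinders to $Q_1$. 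The constant produced this way depends only on $n,\sigma,\lda,\al$ and the $C^{\al,\frac{\al}{2\sigma}}$-norms of $a$ and $b$, as claimed.

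With the a priori estimate in hand, existence follows by the method of continuity: connect \eqref{lpde} to the constant-coefficient problem $\pa_t u+(-\Delta)^\sigma u=f$, $u(\cdot,0)=u_0$ through the family $L_\tau u := (\tau a+(1-\tau))u_t+(-\Delta)^\sigma u+\tau b u$, $\tau\in[0,1]$; each $\tau a+(1-\tau)$ is bounded in $[\lda^{-1},\lda]$ (enlarging $\lda$ if necessary) and lies in $C^{\al,\frac{\al}{2\sigma}}$, so the a priori estimate holds uniformly in $\tau$, and the $\tau=0$ problem is solvable (for smooth compactly supported data by the semigroup $e^{-t(-\Delta)^\sigma}$, then for general $C^{2\sigma+\al}$ data $u_0$ and $C^{\al,\frac{\al}{2\sigma}}$ data $f$ by approximation and the a priori bound). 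The method of continuity then yields solvability of $L_1u=f$ in $\mathcal C^{2\sigma+\al,1+\frac{\al}{2\sigma}}(Q_1)$.

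The main obstacle is the sharp model-case estimate for $\pa_t+(-\Delta)^\sigma$ in the parabolic Hölder scale $\mathcal C^{2\sigma+\al,1+\frac{\al}{2\sigma}}$ — in particular handling the nonlocal term $(-\Delta)^\sigma u$ correctly when $\al$ is close to $\min(1,2\sigma)$ and the subtlety of the integer-order exclusions (which is why we assume $2\sigma+\al\notin\mathbb N$ and $0<\al<\min(1,2\sigma)$). Once that is granted, the freezing argument, the interpolation inequalities, and the method of continuity are all routine. A secondary technical point is that, because $(-\Delta)^\sigma$ is nonlocal, cutoff/localization generates commutator terms of the form $\langle u,\eta_R\rangle + u(-\Delta)^\sigma\eta_R$ (as in the proof of Lemma \ref{heat maximum principle}); these must be controlled in the Hölder norm and shown to be lower-order, which is handled exactly as there by taking the cutoff radius large and using the decay of the kernel $|x-y|^{-n-2\sigma}$.
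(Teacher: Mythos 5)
Your proposal follows essentially the same route as the paper: uniqueness from Lemma \ref{heat maximum principle}, a priori estimates by reduction to the constant-coefficient model $\pa_t + (-\Delta)^\sigma$ via freezing of coefficients and interpolation, and existence by the method of continuity. The paper cites Madych--Rivi\`ere for the model estimate (rather than the Caffarelli--Silvestre extension you mention as an option), and it organizes the freezing step around a near-maximizing pair of points for $[u_t]_{\al,\frac{\al}{2\sigma};Q_1}$ instead of a covering/iteration, but these are minor variations on the same idea.
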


\begin{proof}
By Lemma \ref{heat maximum principle}, there exists $C>0$ depending only on $\lda, |b|_{L^{\infty}(Q_1)}$ such that
\be\label{infinity norm controlled}
|u|_{0; Q_1}\leq C(|u_0|_{0; \R^n}+|f|_{0; Q_1}).
\ee
Then the uniqueness of solutions of \eqref{lpde} follows immediately. In the following, we will show the a priori estimate \eqref{sch}. By \eqref{infinity norm controlled} and some interpolation inequalities in Lemma \ref{interpolation inequality}, we only need to show, instead of \eqref{sch}, \be\label{seminorm sch}
[u]_{2\sigma+\al, 1+\frac{\al}{2\sigma};Q_1}\leq C(|u_0|_{2\sigma+\al; \R^n}+|f|_{\al,\frac{\al}{2\sigma};Q_1}).
\ee
First of all, \eqref{seminorm sch} holds provided $a=1, b=0$ (see, e.g., \cite{MR}),
and it can be easily extended to the case that $a$ is a positive constant.
For the general case, we use the ``freezing coefficients'' method (see, e.g., \cite{Kr}).

Fix a small $\delta>0$, which will be specified later. We can find two points $X_1, X_2\in Q_1$ such that
\[
 \frac{|u_t(X_1)-u_t(X_2)|}{\rho(X_1,X_2)^\al} \geq \frac{1}{2}[u_t]_{\al, \frac{\al}{2\sigma};Q_1}.
\]
If $\rho(X_1,X_2)>\delta$, then
\[
 [u_t]_{\al, \frac{\al}{2\sigma};Q_1}\leq 4\delta^{-\al}|u_t|_{0;Q_1}.
\]
It follows from Lemma \ref{interpolation inequality} that, for any small $\va_0>0$,
\be\label{sch1}
[u_t]_{\al, \frac{\al}{2\sigma};Q_1}\leq \va_0[u]_{2\sigma+\al, 1+\frac{\al}{2\sigma};Q_1}+C_0|u|_{0;Q_1},
\ee
where $C_0>0$ depends on $n,\sigma, \al,\va_0, \delta$.

If $\rho(X_1,X_2)\leq \delta$, take a cut-off function $\eta(X)\in C^\infty(\R^{n+1})$
such that $\eta(X)=1$ for $\rho(X,X_1)\leq \delta$, $\eta(X)=0$ for $\rho(X,X_1)\geq 2\delta$.
By the estimates of solutions of \eqref{lpde} with $a$ being a positive constant and $b\equiv 0$, we have
\[
\begin{split}
 [u_t]_{\al, \frac{\al}{2\sigma};Q_1}&\leq 2 \frac{|u_t(X_1)-u_t(X_2)|}{\rho(X_1,X_2)^\al}\leq 2[u\eta]_{2\sigma+\al,1+\frac{\al}{2\sigma};Q_1}\\&
\leq C_1(|a(X_1)(u\eta)_t+(-\Delta)^\sigma(u\eta)|_{\al,\frac{\al}{2\sigma};Q_1}+|u_0\eta|_{2\sigma+\al;\R^n}+|u\eta|_{0;Q_1}),
\end{split}
\]
where $C_1>0$ is independent of $\delta$.
Note that
\[
 \begin{split}
  & a(X_1)(u\eta)_t+(-\Delta)^\sigma(u\eta)\\&
=\eta(a(X)u_t+(-\Delta)^\sigma u)+\eta(a(X_1)-a(X))u_t+a(X_1)u\eta_t-\langle u,\eta\rangle+u(-\Delta)^\sigma\eta \\&
=\eta(f-bu)+\eta(a(X_1)-a(X))u_t+a(X_1)u\eta_t-\langle u,\eta\rangle+u(-\Delta)^\sigma\eta,
 \end{split}
\]
where
$ \langle u,\eta\rangle $ is defined in \eqref{fractional inner product}. 
Since $|\eta(X)(a(X_1)-a(X))|\leq [a]_{\al,\frac{\al}{2\sigma};Q_1}\delta^\al$, making use of Lemma \ref{interpolation inequality}
again, we have
\be\label{ut estimate before inter}
\begin{split}
  [u_t]_{\al, \frac{\al}{2\sigma};Q_1}&\leq C_1\delta^\al[u]_{2\sigma+\al,1+\frac{\al}{2\sigma};Q_1}+C(\delta)(|u|_{0;Q_1}+|f|_{\al,\frac{\al}{2\sigma};Q_1})\\
  &\quad +C_1|\langle u,\eta\rangle|_{\al,\frac{\al}{2\sigma};Q_1}+C_1|u_0\eta|_{2\sigma+\alpha;\R^n}.
\end{split}
\ee
Hence, from \eqref{estimate of inner product} in Lemma \ref{lemma of inner product}, \eqref{ut estimate before inter} and \eqref{sch1}, we can conclude that
\be\label{sch2}
 [u_t]_{\al, \frac{\al}{2\sigma};Q_1}\leq (C_1\delta^\al+\va_0)[u]_{2\sigma+\al,1+\frac{\al}{2\sigma};Q_1}+C(\delta)(|u|_{0,Q_1}+|f|_{\al,\frac{\al}{2\sigma};Q_1}+|u_0|_{2\sigma+\alpha;\R^n}).
\ee
Since
\[
 u_t+(-\Delta)^\sigma u=(1-a)u_t-bu+f,
\]
we see that
\be\label{para to elliptic}
 [u]_{2\sigma+\al,1+\frac{\al}{2\sigma};Q_1}\leq C([u_t]_{\al, \frac{\al}{2\sigma};Q_1}+|u|_{0;Q_1}+|f|_{\al,\frac{\al}{2\sigma};Q_1}+|u_0|_{2\sigma+\alpha;\R^n},),
\ee
where $C>0$ depending only on $n,\sigma, \lda, \al$, $\|a\|_{\al,\frac{\al}{2\sigma};Q_1}$ and $\|a, b\|_{\al,\frac{\al}{2\sigma};Q_1}$.
Then \eqref{sch} follows from \eqref{infinity norm controlled}, \eqref{para to elliptic} and \eqref{sch2} by choosing sufficiently small $\delta$ and $\va_0$. 

Finally, the existence of solutions of \eqref{lpde} follows from standard continuity method.
\end{proof}

\begin{rem} 
 Cauchy problems for non-local operators and pseudo-differential operators in different spaces have been studied, e.g., in \cite{Ko}, \cite{MP1}, \cite{MP2}, \cite{MP3} and references therein. 
 \end{rem}

\begin{rem}\label{Ob1}
Observe that in the proof of the above proposition the only place we use the uniform lower and upper bounds of $a(x)$ is that at $X_1$, that is  $\frac{1}{\lda}\leq a(X_1)\leq \lda$. This observation will be used in the proof of Proposition \ref{schsn12}.
\end{rem}

\begin{rem}
One can also obtain the estimates in $Q_T$ by considering the scaled function $\tilde u(x,t):=u(T^{1/2\sigma}x, Tt)$.
\end{rem}

For $\gamma\in (0,1)$,  $C^\gamma(\Sn)$ denotes the standard H\"older space over $\Sn$, with norm 
\[
|v|_{\gamma; \Sn}:=|v|_{0;\Sn}+[v]_{\gamma;\Sn}:=\sup_{\Sn}|v(\cdot)|+\sup_{\xi_1\neq \xi_2, \xi_1,\xi_2\in \Sn}\frac{|v(\xi_1)-u(\xi_2)|}{|\xi_1-\xi_2|^{\gamma}},
\]
where  $|\xi_1-\xi_2|$ is understood as the Euclidean distance from $\xi_1$ to $\xi_2$ in $\R^{n+1}$.
For simplicity, we use $C^{\gamma}(\Sn)$ to denote $C^{[\gamma],\gamma-[\gamma]}(\Sn)$ when $1<\gamma\notin \mathbb{N}$, where $[\gamma]$ is the integer part of $\gamma$. 
For $Y_1=(\xi_1,t_1), Y_2=(\xi_2,t_2)\in \Sn\times(0,\infty)$ we denote
\[
 \rho(Y_1,Y_2)=(|\xi_1-\xi_2|^2+|t_1-t_2|^{1/\sigma})^{1/2}.
\]
 We still assume that $0<\al<\min(1,2\sigma)$. Let $\mathcal Q_T=\Sn\times(0,T]$ for $T>0$ . We say $v\in C^{\al, \frac{\al}{2\sigma}}(\mathcal Q_T)$ if
\[
|v|_{\al,\frac{\al}{2\sigma};\mathcal Q_T}=|v|_{0;\mathcal Q_T}+[v]_{\al,\frac{\al}{2\sigma};\mathcal Q_T}:=\sup_{Y\in \mathcal Q_T}v(Y)+\sup_{Y_1\neq Y_2, Y_1,Y_2\in \mathcal Q_T}\frac{|v(Y_1)-u(Y_2)|}{\rho(Y_1,Y_2)^{\al}}<\infty,
\]
and $v\in \mathcal C^{2\sigma+\al,1+\frac{\al}{2\sigma}}(\mathcal Q_T)$ if 
\[
[v]_{2\sigma+\al,1+\frac{\al}{2\sigma};\mathcal Q_T}:=[v_t]_{\al, \frac{\al}{2\sigma};\mathcal Q_T}+[P_{\sigma}(v)]_{\al, \frac{\al}{2\sigma};\mathcal Q_T}<\infty
\]
and
\[
|v|_{2\sigma+\al,1+\frac{\al}{2\sigma};\mathcal Q_T}:=|v|_{0;\mathcal Q_T}+|v_t|_{0,\mathcal Q_T}+|P_{\sigma}(v)|_{0;\mathcal Q_T}+[v]_{2\sigma+\al,1+\frac{\al}{2\sigma};\mathcal Q_T}<\infty.
\]
Then $\mathcal C^{2\sigma+\al,1+\frac{\al}{2\sigma}}(\mathcal Q_T)$ is a Banach space equipped with the norm $|\cdot|_{2\sigma+\al,1+\frac{\al}{2\sigma};\mathcal Q_T}$.

\begin{prop}\label{schsn12} Let $0<\al<\min(1,2\sigma)$ such that $2\sigma+\al$ is not an integer. Let $a(\xi,t)$, $b(\xi,t)$, $f(\xi,t)$ $\in C^{\al,\frac{\al}{2\sigma}}(\mathcal Q_1)$, $v_0\in C^{2\sigma+\al}(\Sn)$ and $\lda^{-1} \leq a(\xi,t)\leq \lda$ for some $\lda \geq 1$.
 Then there exists a unique function $v\in \mathcal C^{2\sigma+\al, 1+\frac{\al}{2\sigma}}$$(\mathcal Q_1)$ such that
\be \label{Dir}
\begin{cases}
 a v_t+P_\sigma(v)+bv=f,&\quad \mbox{in } \mathcal Q_1, \\
v(y,0)=v_0(y),&\quad \mbox{in } \Sn.
\end{cases}
\ee 
Moreover, there exists a constant $C$ depending only on $n,\sigma, \lda, \al, |a|_{\al,\frac{\al}{2\sigma};\mathcal Q_1}$ and $|b|_{\al,\frac{\al}{2\sigma};\mathcal Q_1}$ such that
\be\label{sch13}
 |v|_{2\sigma+\al, 1+\frac{\al}{2\sigma};\mathcal Q_1}\leq C(|v_0|_{2\sigma+\al; \Sn}+|f|_{\al,\frac{\al}{2\sigma};\mathcal Q_1}).
\ee
\end{prop}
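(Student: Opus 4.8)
The plan is to transfer the problem from $\Sn$ to $\R^n$ via stereographic projection and then invoke Proposition \ref{schauder estimates}. The operator $P_\sigma$ on $\Sn$ and $(-\Delta)^\sigma$ on $\R^n$ are conjugate to each other through the map $F$ and the Jacobian weight $|J_F|$, as recorded in \eqref{P-sigma to frac lap}; the same identity holds for the stereographic projection from the south pole. So given $v$ solving \eqref{Dir}, set $u(x,t) = |J_F|^{\frac{n-2\sigma}{2n}} v(F(x),t)$ for $x$ in, say, the unit ball $B_1 \subset \R^n$ (whose image under $F$ is a fixed hemisphere), and more generally cover $\Sn$ by finitely many stereographic charts. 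On each chart $u$ satisfies a Cauchy problem of the form \eqref{lpde} with coefficients $\tilde a(x,t) = a(F(x),t)$, $\tilde b(x,t)$ picking up a lower-order contribution from $|J_F|$, and a right-hand side $\tilde f$ built from $f$ and the Jacobian; all of these lie in $C^{\al,\frac{\al}{2\sigma}}$ of the chart because $|J_F|$ is smooth and bounded above and below there. One then applies Proposition \ref{schauder estimates} (or its $Q_T$ version from the final remark) on each chart to get $u \in \mathcal C^{2\sigma+\al,1+\frac{\al}{2\sigma}}$ with the estimate \eqref{sch}, and pulls the estimate back to $\Sn$, summing over the finitely many charts to obtain \eqref{sch13}. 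Existence follows the same way, either by solving on $\R^n$ and patching, or directly by the continuity method on $\Sn$ using the a priori estimate; uniqueness follows from the maximum principle (Lemma \ref{heat maximum principle}) transplanted to $\Sn$, or simply from \eqref{sch13} applied to a difference of solutions with zero data.

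The one genuinely delicate point is that $(-\Delta)^\sigma$ is \emph{nonlocal}, so cutting off to a chart is not innocuous: $(-\Delta)^\sigma(\eta u)$ differs from $\eta (-\Delta)^\sigma u$ by the commutator terms $\langle u, \eta\rangle$ and $u(-\Delta)^\sigma \eta$ of \eqref{fractional inner product}, which are driven by the values of $u$ outside the chart. But this is exactly the situation already handled inside the proof of Proposition \ref{schauder estimates} via the ``freezing coefficients'' cutoff argument and the estimate \eqref{estimate of inner product} for $\langle u,\eta\rangle$; here the tails of $u$ are controlled because $v$ is globally defined and bounded on all of $\Sn$, so the far-field contribution to $(-\Delta)^\sigma(\eta u)$ is smooth and $C^{\al,\frac{\al}{2\sigma}}$-bounded by $|v|_{0;\mathcal Q_1}$. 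In fact, since $P_\sigma$ has the explicit nonlocal representation \eqref{description of P sigma} on $\Sn$ itself, one could alternatively run the entire freezing-coefficients scheme intrinsically on $\Sn$, mimicking line by line the proof of Proposition \ref{schauder estimates}; the stereographic route is just bookkeeping-lighter because it reuses that proposition as a black box.

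A subtlety worth flagging, already anticipated by Remark \ref{Ob1}: when $\sigma = 1/2$ the weight $|J_F|^{\frac{n-2\sigma}{2n}} = |J_F|^{\frac{n-1}{2n}}$ and the conjugated coefficient $\tilde a$ behave well, but one should be careful that the pointwise bound $\lda^{-1} \le \tilde a(x,t) \le \lda$ only needs to hold at the base point of each freezing, which Remark \ref{Ob1} guarantees suffices. The main obstacle, then, is not any single hard estimate but the organizational care needed to (i) choose a fixed finite atlas of stereographic charts with overlaps, (ii) verify that the commutator/tail terms produced by localization are absorbed using precisely \eqref{infinity norm controlled} and \eqref{estimate of inner product}, and (iii) reassemble the local estimates into the global norm on $\mathcal Q_1$ without losing track of the dependence of $C$ on $n,\sigma,\lda,\al$ and the Hölder norms of $a,b$. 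I expect the write-up to consist mostly of stating the chart setup, the conjugation identity, and then citing Proposition \ref{schauder estimates}, with the commutator bookkeeping being the place where one must be most careful.
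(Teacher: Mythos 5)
Your high-level strategy matches the paper's: transfer to $\R^n$ via stereographic projection, invoke Proposition \ref{schauder estimates}, and use Remark \ref{Ob1} to deal with the fact that the conjugated coefficient is not uniformly elliptic. But you have proposed a substantially heavier machinery than the paper actually uses, and in one place you have the conjugation slightly wrong.

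The paper's proof is entirely global and uses no cutoffs or commutators when passing from $\Sn$ to $\R^n$. The trick is to choose the stereographic projection \emph{adaptively}: one first picks $Y_1,Y_2\in\mathcal Q_1$ where the H\"older quotient of $v_t$ is nearly achieved, rotates so that both spatial points lie in the south hemisphere, and then projects from the \emph{north} pole, so that the corresponding $X_1,X_2$ land in $B(0,1)$. The conjugated function $u$ is then defined on \emph{all} of $\R^n$ (since $v$ lives on all of $\Sn$), and it satisfies the Cauchy problem \eqref{lpde} globally, with no cutting and hence no commutator terms $\langle u,\eta\rangle$ to estimate beyond those already internal to Proposition \ref{schauder estimates}. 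The only new difficulty is that the conjugated leading coefficient is $\bigl(\tfrac{2}{1+|x|^2}\bigr)^{2\sigma}a(F(x),t)$ --- note the Jacobian factor, which your write-up drops when you write $\tilde a(x,t)=a(F(x),t)$ --- and this degenerates to $0$ as $|x|\to\infty$. That is precisely where Remark \ref{Ob1} is invoked: the freezing argument only uses the value of the coefficient at the base point $X_1\in B(0,1)$, where the Jacobian factor is bounded above and below. The rest is bookkeeping with \eqref{P-sigma to frac lap}, the interpolation inequalities of Lemma \ref{interpolation inequality on Sn}, and the maximum principle for the $|v|_0$ bound.

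Your finite-atlas-plus-cutoff version would presumably go through, but it incurs exactly the commutator/tail complications you flag, and it requires verifying that the local estimates glue. The paper's adaptive single-chart argument sidesteps all of that, so if you write this up, I'd recommend switching to that formulation; it also makes the role of Remark \ref{Ob1} crisper, since there is exactly one base point to worry about rather than one per chart. Finally, do correct the form of the conjugated coefficients: all three of $a$, $b$, $f$ pick up powers of $|J_F|$ under \eqref{P-sigma to frac lap}, and the power on $a$ is what drives the degeneration that Remark \ref{Ob1} was designed to absorb.
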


\begin{proof}
Uniqueness of solutions of \eqref{Dir} follows from maximum principles. We only need to show a priori estimate \eqref{sch13},
from which the existence of solution of \eqref{Dir} follows by the standard continuity method. 

Choose $Y_1=(\xi_1,t_1),Y_2=(\xi_2,t_2) \in \Sn\times (0,T)$ such that  
\be\label{sphere starting pt}
 \frac{|v_t(Y_1)-v_t(Y_2)|}{\rho(Y_1,Y_2)^\al}\geq \frac{1}{2}[v_t]_{\al,\frac{\al}{2\sigma};\mathcal Q_1}.
\ee
Without loss of generality we may assume that $\xi_1,\xi_2$ are on the south hemisphere. Let $F(x)$ be the inverse of stereographic projection from the north pole and  
\[
u(x,t)=\left(\frac{2}{1+|x|^2}\right)^{\frac{n-2\sigma}{2}} v(F(x),t).
\]
There exist $x_1, x_2\in B(0,1)$ such that $Y_1=(F(x_1),t_1), Y_2=(F(x_2),t_2)$. We denote $ X_1=(x_1,t_1), X_2=(x_2,t_2)$. By \eqref{sphere starting pt} there exists a constant $C$ depending only $n,\sigma,\al$ such that
\[
[u_t]_{\alpha,\frac{\al}{2\sigma};Q_1}\leq C|v_t|_{0,\mathcal Q_1}+C |u_t|_{0,Q_1}+C\frac{|u_t(X_1)-u_t(X_2)|}{\rho(X_1,X_2)^\al}.
\] 
 Note that $u$ satisfies \eqref{lpde} with $a, b,f$ replaced by 
\[
 \left(\frac{2}{1+|x|^2}\right)^{2\sigma}a(F(x),t), \ \left(\frac{2}{1+|x|^2}\right)^{2\sigma}b(F(x),t), \ \left(\frac{2}{1+|x|^2}\right)^{\frac{n+2\sigma}{2}}f(F(x),t).
\]
 In view of Remark \ref{Ob1} and the arguments in the proof of Proposition \ref{schauder estimates}, we conclude that 
\[
 [u]_{2\sigma+\al, 1+\frac{\al}{2\sigma}; Q_1}\leq C(|v_0|_{2\sigma+\al;\Sn}+|v|_{0;\mathcal Q_1}+|v_t|_{0;\mathcal Q_1}+|f|_{\al, \frac{\al}{2\sigma};\mathcal Q_1}).  
\]
Hence, together with \eqref{sphere starting pt} and interpolation inequalities in Lemma \ref{interpolation inequality on Sn}, we have
\be\label{vt estimate}
 [v_t]_{\al,\frac{\al}{2\sigma};\mathcal Q_1}\leq C(|v_0|_{2\sigma+\al;\Sn}+|v|_{0;\mathcal Q_1}+|f|_{\al,\frac{\al}{2\sigma};\mathcal Q_1}).
\ee
It follows from the maximum principle that $|v|_{0;\mathcal Q_1}\leq C(|v_0|_{2\sigma+\al;\Sn}+|f|_{\al,\frac{\al}{2\sigma};\mathcal Q_1})$. Hence \eqref{sch13} follows from \eqref{vt estimate}, \eqref{Dir} and some inequalities in Lemma \ref{interpolation inequality on Sn}.
\end{proof}

\begin{cor}\label{interior schsn} 
Let $0<\al<\min(1,2\sigma)$ such that $2\sigma+\al$ is not an integer. Let $a(\xi,t)$, $b(\xi,t)$, $f(\xi,t)$ $\in C^{\al,\frac{\al}{2\sigma}}(\mathcal Q_3)$,  $\lda^{-1} \leq a(\xi,t)\leq \lda$ for some $\lda \geq 1$. 
Suppose that $v\in \mathcal C^{2\sigma+\al, 1+\frac{\al}{2\sigma}}$$(\mathcal Q_3)$ satisfies
\[
 a v_t+P_\sigma(v)+bv=f,\quad \mbox{in } \mathcal Q_3.
\]
Then there exists a constant $C$ depending only on $n,\sigma, \lda, \al, |a|_{\al,\frac{\al}{2\sigma};\Sn\times[1,3]}$ and $|b|_{\al,\frac{\al}{2\sigma};\Sn\times [1,3]}$ such that
\[
 |v|_{2\sigma+\al, 1+\frac{\al}{2\sigma};\Sn\times [2,3]}\leq C(|v|_{\al,\frac{\al}{2\sigma}; \Sn\times [1,3]}+|f|_{\al,\frac{\al}{2\sigma};\Sn\times [1,3]}).
\]
\end{cor}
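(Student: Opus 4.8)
The plan is to reduce Corollary~\ref{interior schsn} to the global a priori estimate of Proposition~\ref{schsn12} by a cut-off in the time variable alone; since $\Sn$ is compact there is nothing to localize in space, so the only ``interior'' direction here is $t$, exactly as in interior parabolic Schauder theory where one cuts off near the initial slice.

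First I would fix a smooth function $\eta=\eta(t)$ with $0\le\eta\le 1$, $\eta\equiv 0$ on $[0,1]$ and $\eta\equiv 1$ on $[2,\infty)$, and set $w=\eta v$. Because $P_\sigma$ differentiates only in the spatial variable, $P_\sigma(\eta v)=\eta\,P_\sigma(v)$, so $w$ satisfies
\[
 a w_t+P_\sigma(w)+bw=\eta f+a\eta' v=:\tilde f\quad\text{on }\mathcal Q_3,\qquad w(\cdot,0)=0 .
\]
Since $w$ vanishes for $t\le 1$, I may replace $a$ and $b$ on $\Sn\times(0,1)$ by their time-independent extensions $a(\xi,1)$, $b(\xi,1)$ without altering this equation; the extended coefficients still lie in $C^{\al,\frac{\al}{2\sigma}}$, still satisfy $\lda^{-1}\le a\le\lda$, and their parabolic Hölder norms over $\mathcal Q_3$ are controlled by those of $a,b$ over $\Sn\times[1,3]$.

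Next I would apply Proposition~\ref{schsn12} to $w$ on the time interval $(0,3]$ — its proof applies verbatim on $\mathcal Q_T$ for any finite $T$ (or one rescales time as in the remark following it), the constant then also depending on the fixed value $T=3$, which is harmless. This yields
\[
 |w|_{2\sigma+\al,1+\frac{\al}{2\sigma};\mathcal Q_3}\le C\big(|w(\cdot,0)|_{2\sigma+\al;\Sn}+|\tilde f|_{\al,\frac{\al}{2\sigma};\mathcal Q_3}\big)=C\,|\tilde f|_{\al,\frac{\al}{2\sigma};\mathcal Q_3},
\]
with $C$ depending only on $n,\sigma,\lda,\al$ and the $C^{\al,\frac{\al}{2\sigma}}(\Sn\times[1,3])$ norms of $a,b$. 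Since $\eta,\eta'$ are smooth and supported in $\{t\ge 1\}$ and $a\in C^{\al,\frac{\al}{2\sigma}}$, a product estimate gives $|\tilde f|_{\al,\frac{\al}{2\sigma};\mathcal Q_3}\le C\big(|f|_{\al,\frac{\al}{2\sigma};\Sn\times[1,3]}+|v|_{\al,\frac{\al}{2\sigma};\Sn\times[1,3]}\big)$. As $w\equiv v$ on $\Sn\times[2,3]$, restricting the displayed estimate to that slab gives precisely the claimed bound.

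The only genuinely delicate points are bookkeeping ones: verifying that Proposition~\ref{schsn12} holds on the longer time interval with a constant of the same structure, and arranging the coefficient extension so that the final constant depends only on the norms of $a,b$ over $\Sn\times[1,3]$ rather than over all of $\mathcal Q_3$. There is no analytic obstacle coming from the nonlocality, precisely because the cut-off depends only on $t$ and hence commutes with $P_\sigma$, so no extra ``commutator'' terms of the form $\langle u,\eta\rangle$ appear as they did in the proofs of Lemma~\ref{heat maximum principle} and Proposition~\ref{schauder estimates}.
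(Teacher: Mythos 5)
Your proposal is correct and follows essentially the same route as the paper: a time-only cutoff $\eta(t)$ so that $P_\sigma$ commutes with it, noting that $\eta v$ satisfies the same equation with right-hand side $\eta f + a\eta' v$ and zero initial data, and then invoking Proposition~\ref{schsn12}. The only cosmetic difference is that the paper takes $\eta$ supported in $\{t\ge 4/3\}$ and applies the Schauder estimate on $\Sn\times[1,3]$ with initial time $t=1$, which makes your extension of $a,b$ to $(0,1)$ unnecessary.
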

\begin{proof}
Let $\eta(t)$ be a smooth cut-off function defined on $\R$ such that $\eta(t)=0$ when $t\leq 4/3$ and $\eta(t)=1$ when $t\geq 5/3$. Then $\tilde v:=\eta v$ satisfies
\[
\begin{cases}
 a \tilde v_t+P_\sigma(\tilde v)+b\tilde v=f\eta+av\eta_t,&\quad \mbox{in } \Sn\times[1,3], \\
 \tilde v(\cdot, 1)=0.
\end{cases}
\]
The Corollary follows immediately from Proposition \ref{schsn12}.
\end{proof}

\subsection{Short time existence}\label{short time existence}

\begin{prop}\label{STE}
Let $0<\al<\min(1,2\sigma)$ such that $2\sigma+\al$ is not an integer. Let $v_0\in C^{2\sigma+\al}(\Sn)$ and $v_0>0$ in $\Sn$. Then there exists a small positive constant $T_*$  depending only on $n, \sigma, \al, \inf_{\Sn}v_0, |v_0|_{2\sigma+\al;\Sn}$ and a unique positive solution $v\in \mathcal C^{2\sigma+\al, 1+\frac{\al}{2\sigma}}(\Sn\times [0,T_*])$ of \eqref{Y2} in $\Sn\times (0, T_*]$ with $v(\cdot,0)=v_0$. Furthermore, $v$ is smooth in $\Sn\times(0,T_*)$.
\end{prop}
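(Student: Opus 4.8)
The plan is to recast \eqref{Y2} as a quasilinear Cauchy problem of the type covered by the linear Schauder theory in Proposition \ref{schsn12} and then solve it by a contraction mapping (fixed point) argument on a suitable ball in $\mathcal C^{2\sigma+\al,1+\frac{\al}{2\sigma}}(\mathcal Q_{T})$. Write \eqref{Y2} in the form
\[
N v^{N-1} v_t = -P_\sigma(v) + r_\sigma^g v^N,
\]
i.e. $a(v) v_t + P_\sigma(v) = f(v)$ with $a(v) = N v^{N-1}$ and $f(v)=r_\sigma^g v^N$, where $r_\sigma^g = r_\sigma^g(t)$ depends on $v(\cdot,t)$ through $r_\sigma^g = (\int_{\Sn} v^N)^{-1}\int_{\Sn} v P_\sigma(v)\,\ud vol_{g_{\Sn}}$. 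First I would fix a large constant $M \geq 2|v_0|_{2\sigma+\al;\Sn}$ and a small constant $\mu$ with $0<\mu \le \tfrac12 \inf_{\Sn} v_0$, and work in the closed convex set
\[
\mathcal B_{T} = \{\, w \in \mathcal C^{2\sigma+\al,1+\frac{\al}{2\sigma}}(\mathcal Q_{T}) : w(\cdot,0)=v_0,\ |w|_{2\sigma+\al,1+\frac{\al}{2\sigma};\mathcal Q_{T}}\le M,\ \inf_{\mathcal Q_{T}} w \ge \mu \,\}.
\]
For $w\in\mathcal B_{T}$ define $\Phi(w)=v$ to be the solution of the \emph{linear} problem $a(w)v_t + P_\sigma(v) + 0\cdot v = f(w)$, $v(\cdot,0)=v_0$, which exists and is unique by Proposition \ref{schsn12} since $a(w)=N w^{N-1}\in C^{\al,\frac{\al}{2\sigma}}(\mathcal Q_{T})$ satisfies $\lda^{-1}\le a(w)\le \lda$ with $\lda$ depending only on $\mu, M, N$, and $f(w)=r_\sigma^g[w]\, w^N\in C^{\al,\frac{\al}{2\sigma}}(\mathcal Q_{T})$ (using that $r_\sigma^g[w]$ is a bounded Hölder function of $t$, which follows from the mapping properties of $P_\sigma$ and the definition of $r_\sigma^g$ as a quotient of integrals with denominator bounded below by $\mu^N vol_{g_{\Sn}}(\Sn)$).

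Next I would show $\Phi$ maps $\mathcal B_{T}$ into itself for $T$ small. Proposition \ref{schsn12} gives $|v|_{2\sigma+\al,1+\frac{\al}{2\sigma};\mathcal Q_{T}} \le C(|v_0|_{2\sigma+\al;\Sn} + |f(w)|_{\al,\frac{\al}{2\sigma};\mathcal Q_{T}})$ with $C=C(n,\sigma,\lda,\al,M)$; however $|f(w)|_{\al,\frac{\al}{2\sigma};\mathcal Q_{T}}$ is not small for small $T$, so the naive bound only controls $|v|$ by a fixed large constant. The standard remedy is to estimate $v - v_0$: since $v-v_0$ solves a problem with zero initial data and right-hand side $f(w)-a(w)(v_0)_t = f(w)$ together with the constant-coefficient operator applied to $v_0$, one gets $[v-v_0]_{2\sigma+\al,1+\frac{\al}{2\sigma};\mathcal Q_{T}} + |v-v_0|_{0;\mathcal Q_{T}} \le C\, T^{\beta}$ for some $\beta>0$ depending on $\al,\sigma$ (using that Hölder norms of the data over $\mathcal Q_{T}$, when measured against $v_0$ which is time-independent, carry a positive power of $T$; more precisely, $f(w)$ is bounded, so $|v_t|_{0;\mathcal Q_{T}}\le C$ gives $|v-v_0|_{0;\mathcal Q_T}\le CT$, and interpolation upgrades this to the full norm with a gain $T^\beta$). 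Choosing $T=T_*$ small then forces $|v|_{2\sigma+\al,1+\frac{\al}{2\sigma};\mathcal Q_{T_*}}\le M$ and, since $|v-v_0|_{0;\mathcal Q_{T_*}}$ is small, $\inf v \ge \mu$. Then I would check $\Phi$ is a contraction on $\mathcal B_{T_*}$ in the weaker norm $|\cdot|_{\al,\frac{\al}{2\sigma};\mathcal Q_{T_*}}$ (or $|\cdot|_{0}$): if $v_i=\Phi(w_i)$, then $v_1-v_2$ solves $a(w_1)(v_1-v_2)_t + P_\sigma(v_1-v_2) = (f(w_1)-f(w_2)) + (a(w_2)-a(w_1))(v_2)_t$ with zero initial data, and since $a$ and $f$ are locally Lipschitz in $w$ (on $\{\ge\mu,\ \le M\}$) while $(v_2)_t$ is bounded, the Schauder estimate plus the $T^\beta$-smallness yields $|v_1-v_2|\le \tfrac12 |w_1-w_2|$ for $T_*$ small. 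The contraction mapping theorem then produces a unique fixed point $v\in\mathcal B_{T_*}$, which is the desired solution of \eqref{Y2}; uniqueness in the full class follows from the maximum principle (Lemma \ref{heat maximum principle} transplanted to $\Sn$ via stereographic projection) applied to the difference of two solutions, after linearizing as in Proposition \ref{weak maximum principle}.

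Finally, for smoothness in $\Sn\times(0,T_*)$ I would bootstrap: once $v\in\mathcal C^{2\sigma+\al,1+\frac{\al}{2\sigma}}$ is known, the coefficients $a(v)=Nv^{N-1}$ and $f(v)=r_\sigma^g v^N$ lie in $C^{\al',\frac{\al'}{2\sigma}}$ for a possibly different exponent, and more importantly on any compact subinterval $[\tau, T_*)$ with $\tau>0$ the interior estimate Corollary \ref{interior schsn} (rescaled) applies; differentiating the equation in space and in time and iterating Corollary \ref{interior schsn} gives $v\in C^\infty$ away from $t=0$. The main obstacle is the second paragraph: extracting the factor $T^\beta$ so that $\Phi$ is both a self-map of $\mathcal B_{T_*}$ and a contraction. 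This requires care because the Schauder constant in Proposition \ref{schsn12} is stated for the fixed domain $\mathcal Q_1$; one rescales to $\mathcal Q_{T}$ via $\tilde v(y,t)=v(y,Tt)$ (as in the Remark after Proposition \ref{schauder estimates}), tracks how the norms and the coefficient bounds transform under this rescaling, and exploits that the initial data $v_0$ has no time variation so that all the "increments over $\mathcal Q_T$" of the lower-order data against $v_0$ are $O(T^\beta)$. Everything else is routine once this smallness is in hand.
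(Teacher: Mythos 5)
Your overall strategy (set up a fixed-point map for the linearized problem and close the argument by a contraction estimate on a small time interval) is a classical and legitimate route, but it is genuinely different from the paper's. The paper reduces by a time rescaling to the unnormalized equation $\pa_t v^N=-P_\sigma(v)$, then solves the \emph{frozen-coefficient} linear problem $Nv_0^{N-1}w_t=-P_\sigma(w)$, $w(\cdot,0)=v_0$, and seeks $v=w+\varphi$ via the Implicit Function Theorem. The crucial structural point in the paper's proof is that
\[
\mathcal F(w)=Nw^{N-1}w_t+P_\sigma(w)=N\bigl(w^{N-1}-v_0^{N-1}\bigr)w_t,
\]
which vanishes identically at $t=0$ by construction of $w$; it is this cancellation, not a direct Schauder-constant gain, that gives the smallness $\|\eta\,\mathcal F(w)\|_{C^{\al,\frac{\al}{2\sigma}}}\le\delta$ on a short time interval and hence lets the IFT produce $\varphi$ with $\mathcal F(w+\varphi)=0$. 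In exchange, all linear estimates are used on a \emph{fixed} cylinder $\mathcal Q_{T_{\va_0}}$, so no small-time improvement of the Schauder constant is needed.

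The gap in your argument is precisely the step you flag as the "main obstacle," and the remedy you propose there does not work on $\Sn$. You suggest rescaling via $\tilde v(y,t)=v(y,Tt)$ and invoking the Remark after Proposition~\ref{schauder estimates}. That Remark, however, uses the genuine parabolic rescaling $\tilde u(x,t)=u(T^{1/2\sigma}x,Tt)$, which scales both space and time and has no analogue on the compact manifold $\Sn$. The time-only rescaling you write turns $a v_t + P_\sigma v = f$ into $(a/T)\tilde v_t + P_\sigma \tilde v = f$, so the coefficient bound $\lda$ entering Proposition~\ref{schsn12} blows up as $T\to 0$ and the Schauder constant degenerates rather than improves. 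Consequently your claim that "interpolation upgrades [$|v-v_0|_0\le CT$] to the full norm with a gain $T^\beta$" is not substantiated: interpolation between $|\cdot|_0$ and the top seminorm $[\cdot]_{2\sigma+\al,1+\frac{\al}{2\sigma}}$ only controls the intermediate seminorms $[\cdot]_{2\sigma+\al',1+\frac{\al'}{2\sigma}}$ with $\al'<\al$, not the full $\al$-norm you need for the self-map bound $|\Phi(w)|_{2\sigma+\al,1+\frac{\al}{2\sigma};\mathcal Q_{T_*}}\le M$. To repair this you would have to either (i) run the contraction in a strictly weaker Hölder scale $\mathcal C^{2\sigma+\al',1+\frac{\al'}{2\sigma}}$ with $\al'<\al$ while defining $\mathcal B_T$ by the stronger $\al$-norm (which is lower semicontinuous under $\al'$-convergence), or (ii) borrow the paper's device of subtracting off the solution $w$ of the frozen-coefficient problem so that the relevant source term vanishes at $t=0$ and a cutoff in time makes it small in $C^{\al,\frac{\al}{2\sigma}}$. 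As written, the self-map step of your fixed-point argument is not justified.

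Two smaller remarks. First, the paper eliminates the nonlocal coefficient $r_\sigma^g(t)$ at the outset by a change of time variable, whereas you carry it along; this is not an error, but it does force you to check Hölder continuity of $t\mapsto r_\sigma^g[w](t)$, which you assert but do not verify. Second, your uniqueness and smoothness bootstrap arguments are consistent with what the paper does (Corollary~\ref{interior schsn} plus iteration), so those parts are fine once existence is secured.
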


\begin{proof} By a scaling argument in the time variable, we only need to show the short time existence of
\[
\begin{cases}
\frac{\pa v^N}{\pa t}=-P_{\sigma}(v),\\
v(\cdot, 0)=v_0.
\end{cases}
\]

We shall use the Implicit Function Theorem.
By Proposition \ref{schsn12}, there exists a function $w\in \mathcal C^{2\sigma+\al, 1+\frac{\al}{2\sigma}}(\Sn\times (0,1])$ such that
\[
 \begin{cases}
  N v_0^{N-1} w_t=-P_\sigma(w),& \quad \mbox{in }\Sn\times (0,1],\\
w(\cdot, 0)=v_0,
 \end{cases}
\]
and for any small positive constant $\va_0$, we have $\|w(\cdot,t)-v_0\|_{C^{2\sigma+\al}(\Sn)}\leq \va_0$ provided $t\leq T_{\va_0}$. Here $T_{\va_0}$ is a positive constant
depending on $\va_0$. Hence, we may assume that $w>0$ in $\Sn$.

Denote
\[
 \mathscr{X}=\{\varphi\in \mathcal C^{2\sigma+\al, 1+\frac{\al}{2\sigma}}(\Sn\times (0,T_{\va_0}]): \varphi(\cdot, 0)=0\},
\]
and
\[
 \mathscr{Y}=C^{\al, \frac{\al}{2\sigma}}(\Sn\times (0,T_{\va_0}]).
\]
Define $\mathcal{F}(v):=N|v|^{N-1}\frac{\pa v}{\pa t}+P_\sigma(v)$ for $v\in  \mathcal C^{2\sigma+\al, 1+\frac{\al}{2\sigma}}(\Sn\times (0,T_{\va_0}])$, and
\[
 L:\mathscr{X}\to \mathscr{Y},\quad \varphi \mapsto \mathcal{F}(w+\varphi)-\mathcal{F}(w).
\]
Note that $L(0)=0$,
\[
 L'(0)\varphi=Nw^{N-1}\varphi_t+P_\sigma(\varphi)+N(N-1)w^{N-2}w_t\varphi, \quad \forall \ \varphi\in \mathscr{X}.
\]
It follows from Proposition \ref{schsn12} that $L'(0): \mathscr{X}\to \mathscr{Y}$ is invertible when $\va_0$ is chosen sufficiently small.

By the Implicit Function Theorem, there exists a positive constant $\delta>0$ such that for any $\phi\in \mathscr{Y}$ with $\|\phi\|_{\mathscr{Y}}\leq \delta$ there exists a
unique solution $\varphi\in \mathscr{X}$ of the equation
\[
 L(\varphi)=\phi.
\]
Let $ T_*>0$ be small. Pick a cut off function $0\leq \eta(t)\leq 1$ in $\R_+$ satisfying $\eta(t)=1$ for $s\leq T_*$ and $\eta(t)=0$ if $s\geq 2T_*$. It is easy to see that
\[
 \|\eta(t)\mathcal{F}(w)\|_{\mathscr{Y}}\leq \delta
\]
provided $T_*$ is sufficiently small. Therefore, there exists a function $\varphi \in \mathscr{X}$ such that
\[
 L(\varphi)=-\eta(t)\mathcal{F}(w).
\]
Thus, $v:=w+\varphi$ satisfies $v(\cdot,0)=v_0$ and
\[
 \mathcal{F}(w+\varphi)=0,\quad \mbox{in } \Sn\times (0,T_*].
\]
Moreover, $v$ is positive if $T_*$ is small enough. The smoothness of $v$ follows from Corollary \ref{interior schsn} and bootstrap arguments.
\end{proof}

\subsection{Long time existence and convergence}

\begin{prop}\label{infinity norm controls all}
Let $v$ be a positive smooth solution of \eqref{Y2} in $\Sn\times (0,3]$ and satisfy $\Lambda^{-1}\leq v(y,t)\leq \Lambda$ for all $(y,t)\in \Sn\times (0,3]$ with some positive constant $\Lambda$. Then for any positive integer $k$, 
\be\label{higher order estimate}
\|v\|_{C^{k}(\Sn\times [2,3])}\leq C
\ee
where $C>0$ depends only on $n, \sigma, k$, $\Lambda$, and $r_{\sigma}^{g(1)}$.
\end{prop}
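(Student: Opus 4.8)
The plan is to bootstrap from the given $L^\infty$ bounds $\Lambda^{-1}\le v\le \Lambda$ to full $C^k$ control, using the interior Schauder estimate of Corollary \ref{interior schsn} and the Harnack-type gradient machinery of Section \ref{harnack}. First I would fix $\alpha\in(0,\min(1,2\sigma))$ with $2\sigma+\alpha\notin\mathbb{N}$ and rewrite \eqref{Y2} in the linear form
\[
a(y,t)v_t+P_\sigma(v)=r_\sigma^{g(t)}v^N,\qquad a(y,t)=N v^{N-1}(y,t),
\]
where, by the hypothesis $\Lambda^{-1}\le v\le\Lambda$, the coefficient $a$ satisfies $\tilde\Lambda^{-1}\le a\le\tilde\Lambda$ for some $\tilde\Lambda=\tilde\Lambda(n,\sigma,\Lambda)$, and the right-hand side is bounded in terms of $\Lambda$ and $r_\sigma^{g(1)}$ (recall that $r_\sigma^g$ is constant in $t$ up to the normalization, or at any rate is controlled by $r_\sigma^{g(1)}$ and $\Lambda$ along the flow since volume and total curvature are monotone). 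At this first stage we only know $v$ is bounded, not yet Hölder, so the coefficient $a$ and the right-hand side $f=r_\sigma v^N$ are merely bounded; hence I cannot directly invoke Corollary \ref{interior schsn}, which needs $C^{\alpha,\alpha/2\sigma}$ coefficients.

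The key step is therefore to upgrade $v$ from $L^\infty$ to $C^{\alpha,\alpha/2\sigma}$ on an interior time slab. For this I would first apply the Harnack inequality of Theorem \ref{thm:harnack} — or rather its proof — to get a uniform $C^1$-in-space bound: the moving-plane argument in Section \ref{harnack} produces $\sup_{\Sn}|\nabla_{\Sn}v(\cdot,t)|/|v(\cdot,t)|\le C$, and since $v$ is already bounded this gives a uniform Lipschitz bound in the spatial variable on $\Sn\times[1,3]$ (the constant depending on $n,\sigma,\Lambda$ and $r_\sigma^{g(1)}$ through the hypotheses feeding into Theorem \ref{thm:harnack}, after a harmless time shift so that $t=1$ plays the role of the initial time). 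A spatial Lipschitz bound together with the equation $v_t = a^{-1}(r_\sigma v^N - P_\sigma(v))$ and the integral representation \eqref{description of P sigma} of $P_\sigma$ then bounds $P_\sigma(v)$ pointwise, hence bounds $v_t$ in $L^\infty$; combining the spatial Lipschitz bound and the $L^\infty$ bound on $v_t$ yields $v\in C^{\alpha,\alpha/2\sigma}(\Sn\times[1,3])$ for every admissible $\alpha$, with norm controlled by the allowed quantities. Now $a=Nv^{N-1}$ and $f=r_\sigma v^N$ inherit $C^{\alpha,\alpha/2\sigma}$ bounds, so Corollary \ref{interior schsn} applies on $\Sn\times[1,3]$ and gives $v\in\mathcal C^{2\sigma+\alpha,1+\alpha/2\sigma}(\Sn\times[2,3])$ with the stated dependence. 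This is already \eqref{higher order estimate} for the first nontrivial order; I expect this upgrade from boundedness to Hölder continuity — i.e. getting \emph{any} time-regularity to start the machine — to be the main obstacle, which is precisely why the gradient estimate from Section \ref{harnack} is essential here.

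The remainder is a standard bootstrap. Having $v\in\mathcal C^{2\sigma+\alpha,1+\alpha/2\sigma}$ on an interior slab, the coefficients $a=Nv^{N-1}$, $b\equiv 0$ and the right-hand side $f=r_\sigma v^N$ now lie in $C^{2\sigma+\alpha-\varepsilon,\dots}$ or better (products and powers of a $C^{2\sigma+\alpha,1+\alpha/2\sigma}$ function with a positive lower bound stay in the same class), so a further application of the Schauder estimate on a slightly smaller slab raises the regularity of $v$ by roughly $2\sigma$ derivatives in space and one in the parabolic scaling. Iterating finitely many times and shrinking the time interval at each step — staying always inside $[2,3]$ after the first shrinkage, or more carefully running the iteration on $\Sn\times[3/2,3]$, then $\Sn\times[7/4,3]$, etc., converging to $\Sn\times[2,3]$ — we obtain $v\in C^k(\Sn\times[2,3])$ for every $k$, with the constant depending only on $n,\sigma,k,\Lambda$ and $r_\sigma^{g(1)}$. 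One should also record that $r_\sigma^{g(t)}$ itself is controlled along the flow: since $S(g)$ is monotone and the volume is preserved under the normalized flow \eqref{Y1}, $|r_\sigma^{g(t)}|$ is bounded in terms of $r_\sigma^{g(1)}$ and $\Lambda$, so that invoking $r_\sigma^{g(1)}$ in the constant is legitimate. This completes the proof of \eqref{higher order estimate}.
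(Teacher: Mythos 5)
Your bootstrap plan (linearize, apply Corollary \ref{interior schsn}, iterate on shrinking time slabs) is the right skeleton and matches the paper, as is the observation that $r_\sigma^{g(t)}$ is monotone and hence controlled. The problem is in your ``key step,'' the upgrade from $L^\infty$ to H\"older. You propose to get a spatial Lipschitz bound for $v$ by invoking Theorem \ref{thm:harnack} (or its moving-plane proof) with $t=1$ as the new initial time. But the constant in Theorem \ref{thm:harnack} depends explicitly on $\|v(\cdot,0)\|_{C^{3}(\Sn)}$: the moving-plane argument needs the Taylor expansion of $u(\cdot,0)$ and $\nabla u(\cdot,0)$ near infinity to produce the starting plane $\lambda_0$, which is a genuine $C^3$ requirement on the slice from which the planes are moved. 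After your time shift this becomes $\|v(\cdot,1)\|_{C^3(\Sn)}$ — exactly the quantity Proposition \ref{infinity norm controls all} is trying to bound. The hypotheses of the proposition give only $L^\infty$ control with constant $\Lambda$, so feeding $t=1$ into the Harnack machinery is circular; there is no ``harmless'' time shift here.

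The paper closes this gap by going outside its own flow machinery: after reducing (via the scaling/monotonicity remark you correctly make) to the pure porous-medium form $\partial_t v^N = -P_\sigma(v)$, it invokes the interior H\"older estimate for nonlocal porous-medium / fractional fast-diffusion equations from Athanasopoulos--Caffarelli \cite{AC} (see also Theorem 9.2 in \cite{dQRV}), which gives $v\in C^{\beta,\beta/2\sigma}(\Sn\times[1,3])$ for some $\beta\in(0,\min(1,2\sigma))$ with constant depending only on $n,\sigma,\Lambda$. That estimate requires no a priori regularity of the initial slice beyond boundedness, so it breaks the circularity. Once that H\"older bound is in hand, your remaining steps (coefficients and right-hand side are then H\"older, apply Corollary \ref{interior schsn}, bootstrap) are fine. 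So the missing ingredient is a De Giorgi--Nash--Moser-type regularity result for the nonlinear nonlocal parabolic equation, not the Harnack inequality of Section \ref{harnack}.
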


\begin{proof}
We first observe that $r_{\sigma}^{g(t)}$ is decreasing in $t$, and is lower bounded away from $0$ by Sobolev inequalities (see, e.g., \cite{Be}).
Hence through a scaling argument in $t$, we may assume that $v$ satisfies the equation $\frac{\pa v^N}{\pa t}=-P_{\sigma}(v)$ instead of \eqref{Y2}. 
By the H\"older estimates in \cite{AC} (see also Theorem 9.3 in \cite{dQRV}), there exists some $\beta\in(0,\min(1,2\sigma))$ such that
\[
|v|_{\beta,\frac{\beta}{2\sigma}; \Sn\times [1,3]}\leq C(n, \sigma, \beta, \Lambda).
\]
The Proposition follows from Corollary \ref{interior schsn} and bootstrap arguments.
\end{proof}

\begin{proof}[Proof of Theorem \ref{convergence of fractional yamabe flow on spheres}]
By Proposition \ref{STE}, we have a unique positive smooth solution of \eqref{Y1} on a maximum time interval $[0,T^*)$. Since the flow preserves the volume of the sphere, the Harnack inequality in Theorem \ref{thm:harnack} implies that $v(x,t)$ is uniformly bounded from above and away from zero. Proposition \ref{infinity norm controls all} yields smooth estimates for $v$ on $\Sn\times [\min(1,T^*/2),T^*)$. It follows that $T^*=\infty$, since otherwise by Proposition \ref{STE} we can extend $v$ beyond $T^*$.  Moreover, there exists $v_{\infty}\in C^{\infty}(\Sn)$ and a sequence $\{v(t_j)\}$ such that $v(t_j)$ converges smoothly to $v_{\infty}$. By Theorem \ref{convergence of flow 2} in the Appendix, $v(t)$ converges smoothly to $v_{\infty}$, i.e. there exists a smooth metric $g_{\infty}$ on $\Sn$ such that $g(t)$ converges smoothly to $g_{\infty}$. The formula for the gradient of the total $\sigma$-curvature gives
\[
\frac{d S}{d t}=-\frac{n-2\sigma}{2n}( vol_g(\Sn))^{\frac{2\sigma-n}{n}}\int_{\Sn}(R_{\sigma}^g-r_{\sigma}^g)^2 \ud vol_g.
\]
Thus,
\[
\int_0^{\infty}\int_{\Sn} (R_{\sigma}^g-r_{\sigma}^g)^2 \ud vol_g<\infty,
\]
which implies that $R_\sigma^{g_{\infty}}$ is a positive constant. 
\end{proof}

\section{Extinction profile of a fractional porous medium equation}\label{Extinction profile of a fractional porous medium equation}

Let $u(x,t)$ be the solution of \eqref{fpme} and $T>0$ be its extinction time. Since $u_0$ is not identically zero, it is proved in \cite{dQRV} that $u(x,t)>0$ in $\R^n\times (0,T)$ and $u(x,t)\in C^{\alpha}(\R^n\times (0,T))$ for some $\alpha \in (0,1)$.
We define $v(F(x),s)$ for all $x\in\R^n$ and all $s\geq 0$ as
\be\label{u to v}
v(F(x),s): = \left(\frac{1+\abs{x}^2}{2}\right)^{\frac{n-2\sigma}{2}}(T-t)^{-m/(1-m)} u(x,t)^{m}|_{t=T(1-e^{-s})},
\ee
where $F: \R^n\to\Sn$ is the inverse of stereographic projection from the north pole and $m=\frac{n-2\sigma}{n+2\sigma}$. By the assumption of $u_0$, we have $v(\cdot,0)\in C^2(\Sn)$. It follows from Proposition \ref{STE} that, there exists an $s^*>0$ and a unique positive function $\tilde v\in C^{\infty}(\Sn\times (0,s^*))$ satisfies 
\be\label{fpme for v}
\frac{\pa \tilde v^N}{\pa s}=-P_{\sigma}(\tilde v)+\frac{1}{1-m}\tilde v^N
\ee
and $\tilde v_0=v(\cdot,0)$. On the other hand, $\tilde u(x,t)$, which is defined by $\tilde v$ through \eqref{u to v}, satisfies \eqref{fpme}. By the uniqueness theorem on the solution of \eqref{fpme} in \cite{dQRV}, $v\equiv \tilde v$ in $\Sn\backslash\{\mathcal N\}\times (0,s^*)$, and hence $v$ can be extended to a positive and smooth function in $\Sn\times (0,s^*)$.

Our first goal is that $v$ defined by relation \eqref{u to v} is positive and smooth in $\Sn\times (0,\infty)$. Secondly, we will show that $v$ converges to a steady solution of \eqref{fpme for v}. In summary, we will show the following theorem in terms of $v$.

\begin{thm}\label{convergence of v}
Let $v$ be defined by relation \eqref{u to v}. Then $v$ is positive and smooth in $\Sn\times(0,\infty)$. Moreover, there is a unique positive solution $\bar v$ of
\be\label{steady equation of v}
-P_{\sigma}(\bar v)+\frac{1}{1-m}\bar v^N=0
\ee
such that
\[
\|v(y,s)-\bar v(y)\|_{C^3(\Sn)}\to 0\quad \text{as }s\to\infty.
\]
\end{thm}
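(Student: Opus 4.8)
The plan is to run the machinery built in Sections \ref{max and hopf}--\ref{section: existence and convergence} on equation \eqref{fpme for v}, which is exactly the normalized fractional Yamabe flow with the fixed constant $r_\sigma^g$ replaced by $\tfrac{1}{1-m}$, and then upgrade subsequential convergence to full convergence via the \L{}ojasiewicz--Simon argument of Appendix \ref{appendix a}. First I would establish that $v$ is globally defined, positive and smooth on $\Sn\times(0,\infty)$. By the discussion preceding the theorem, $v$ coincides with the local smooth solution $\tilde v$ of \eqref{fpme for v} given by Proposition \ref{STE} on a maximal interval $(0,s^*)$, and the point is to show $s^*=\infty$. The key quantitative input is an upper and lower bound for $v(\cdot,s)$ independent of $s$. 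This should come from two ingredients: (i) the extinction-time/mass estimates — since $u(\cdot,t)$ extinguishes exactly at $T$, the time-rescaling $t=T(1-e^{-s})$ together with the scaling factors in \eqref{u to v} keeps $\|v(\cdot,s)\|$ in a compact range, using the conservation/monotonicity properties of \eqref{fpme} from \cite{dQRV} and the fact that the conformal factor $(u_0^m)_{0,1}$ extends positively across the origin (this is where the hypothesis on $u_0$ is used, to control $v$ near the north pole $\mathcal N$); and (ii) the Harnack inequality of Theorem \ref{thm:harnack}, applied with $b(s)\equiv \tfrac{1}{1-m}$, which turns an $L^\infty$ bound from above into a two-sided bound $\Lambda^{-1}\le v\le\Lambda$ once we know $v$ does not blow up or vanish. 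Given such a bound, Proposition \ref{infinity norm controls all} (which is stated for \eqref{Y2} but applies verbatim with the constant coefficient $\tfrac{1}{1-m}$ in place of $r_\sigma^{g(t)}$) yields uniform $C^k$ estimates on $\Sn\times[s_0,s_0+1]$ for every $s_0$ and every $k$; hence $s^*=\infty$, for otherwise Proposition \ref{STE} would extend the solution past $s^*$.

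Next I would extract a convergent subsequence. The energy functional adapted to \eqref{fpme for v} is
\[
E(v)=\frac{1}{2}\int_{\Sn} v\,P_\sigma(v)\,\ud vol_{g_{\Sn}}-\frac{1}{(1-m)N}\int_{\Sn} v^{N+1}\,\ud vol_{g_{\Sn}},
\]
and a direct computation (multiplying \eqref{fpme for v} by $v_s$ and integrating) gives
\[
\frac{\ud}{\ud s}E(v(\cdot,s))=-N\int_{\Sn} v^{N-1}v_s^2\,\ud vol_{g_{\Sn}}\le 0,
\]
so $E$ is monotone and bounded (using the uniform bounds above), whence $\int_0^\infty\!\int_{\Sn}v^{N-1}v_s^2\,\ud vol_{g_{\Sn}}\,\ud s<\infty$. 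Choosing $s_j\to\infty$ with $\int_{\Sn}v^{N-1}v_s^2(\cdot,s_j)\to 0$, the uniform $C^k$ bounds give (after passing to a further subsequence) $v(\cdot,s_j)\to \bar v$ in $C^\infty(\Sn)$ with $\bar v>0$ and $P_\sigma(\bar v)=\tfrac{1}{1-m}\bar v^N$, i.e. \eqref{steady equation of v}. Uniqueness of the positive solution of \eqref{steady equation of v} is not true as stated in full generality — the positive solutions are the bubbles classified in \cite{CLO},\cite{Li04} — so "unique $\bar v$" here must be read as: the particular limit is unique along the flow; this is precisely the content of the \L{}ojasiewicz--Simon uniqueness theorem, Theorem \ref{convergence of flow 2} in Appendix \ref{appendix a}, which promotes subsequential convergence of a negative gradient flow with an analytic energy to convergence of the full trajectory. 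Applying that theorem with the above $E$ gives $v(\cdot,s)\to\bar v$; interpolating the $C^\infty$ bounds with this (a priori weaker) convergence, and in particular extracting the $C^3$ statement, finishes the proof.

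The main obstacle I expect is step one: establishing the $s$-uniform two-sided bound on $v$, equivalently showing that the rescaling \eqref{u to v} exactly neutralizes the extinction. This is the heart of del Pino--Sáez's argument in \cite{ds} and requires combining the precise extinction-time estimates (Lemma \ref{estimate of extinction time}) with a careful analysis of the behavior of $u(x,t)$ as $x\to\infty$ and as $t\to T$ — one must rule out both that $v$ concentrates (a bubble escaping to $\mathcal N$, which is where the $C^2$-extendability hypothesis on $(u_0^m)_{0,1}$ enters) and that $v$ degenerates. A secondary technical point is checking that the energy $E$ is real-analytic on a neighbourhood of $\bar v$ in the relevant Hölder space so that Appendix \ref{appendix a} applies; since $P_\sigma$ is a (self-adjoint, positive) pseudodifferential operator with smooth symbol and $v\mapsto v^{N+1}$ is analytic away from $0$, this should be routine but needs to be stated.
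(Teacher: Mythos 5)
Your high-level plan — uniform two-sided a priori bounds on $v$, then monotone energy plus compactness for subsequential convergence, then the \L{}ojasiewicz--Simon machinery of Appendix \ref{appendix a} to upgrade to full convergence — is exactly the structure of the paper's proof (Proposition \ref{universal estimate}, Lemmas \ref{decreasing energy}--\ref{positive energy}, and Theorem \ref{convergence of flow}). But the crucial step, which you yourself flag as ``the main obstacle,'' is where the proposal has a genuine gap rather than merely a deferred detail.

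You propose to obtain the $s$-uniform bounds $\beta_1\le v\le\beta_2$ from ``extinction-time/mass estimates'' (pointing to Lemma \ref{estimate of extinction time}) together with the Harnack inequality. That is not how it works, and Lemma \ref{estimate of extinction time} plays no role in the paper's Proposition \ref{universal estimate} — it is used later, in Section \ref{inequality along flow}, for the bound $bT\le n/2\sigma$. Integral extinction estimates control $\int u^{m+1}$, not $\sup u$ or $\inf u$, and the Harnack inequality converts a bound on $\max_\Sn v$ into a bound on $\min_\Sn v$ (and vice versa) only once you \emph{already} know one of them stays in a compact range; it cannot bootstrap from $L^1$ information alone. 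The actual mechanism in Proposition \ref{universal estimate} is three explicit comparison arguments: (Step 1) a time-dependent supersolution of the form $U(x,t)=K^{1/m}\bigl[(1+s_\varepsilon-\log T+\log(T-t))(T-t)\bigr]_+^{1/(1-m)}\bigl(\tfrac{2}{1+|x|^2}\bigr)^{(n+2\sigma)/2}$, which would force $u$ to extinguish before $T$ if $\max_\Sn v$ ever dropped below a fixed $\kappa_1$; (Step 2) a pure Barenblatt-type barrier $(M-t)_+^{1/(1-m)}k(n,\sigma)(1+|x|^2)^{-(n+2\sigma)/2}$ to get the upper bound $v\le\beta_2$; and (Step 3) a subsolution argument showing $\min_\Sn v\le\kappa_2$, again by contradiction with the extinction at time $T$. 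Only after Steps 1--3 does Harnack (Theorem \ref{thm:harnack}) close the loop with $\beta_1=\kappa_1/C$, $\beta_2=C\kappa_2$. Without these barrier constructions the first step of your argument does not go through, and everything downstream (Proposition \ref{infinity norm controls all} for higher regularity, boundedness of the energy, compactness) collapses.

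Two smaller corrections. Your energy functional has coefficient $\tfrac{1}{(1-m)N}$ in front of $\int v^{N+1}$; the correct coefficient that makes $\frac{\ud}{\ud s}J=-N\int v^{N-1}v_s^2$ is $\tfrac{1}{(1-m)(N+1)}$, as in the paper's $J$. And the relevant full-convergence statement is Theorem \ref{convergence of flow} (the one involving $\bar v$), not Theorem \ref{convergence of flow 2} (the one for $v_\infty$ in the normalized Yamabe flow). Your reading of ``unique'' as uniqueness of the asymptotic limit (not of all positive solutions of \eqref{steady equation of v}) is correct and is indeed what the \L{}ojasiewicz--Simon argument delivers; and your observation that the energy lower bound can be obtained directly from the $L^\infty$ bounds on $v$, rather than via the exponential-growth contradiction of Lemma \ref{positive energy}, is a legitimate shortcut.
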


Our proof of Theorem \ref{convergence of v} is inspired by some arguments in \cite{ds}. To prove convergence of $v(\cdot,t)$, we first establish the following universal estimates.
\begin{prop}\label{universal estimate}
Let $v$ be defined by relation \eqref{u to v}. There exist positive constants $\beta_1,\beta_2$ such that
\[
\beta_{1}\leq v(y,s)\leq \beta_2
\]
for all $y\in \Sn$, $s^*/2\le s< +\infty$. Hence, $v\in C^{\infty}(\Sn\times (s^*/2, \infty))$.
\end{prop}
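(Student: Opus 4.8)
The plan is to reduce the pointwise bounds to a two–sided bound on the single scalar quantity
\[
V(s):=\int_{\Sn}v(\cdot,s)^{N+1}\,\ud vol_{g_{\Sn}}=vol_{g(s)}(\Sn),\qquad g(s)=v^{\frac{4}{n-2\sigma}}(\cdot,s)\,g_{\Sn},
\]
and then to upgrade this to an $L^\infty$ control via the Harnack inequality. Under the conformal/self–similar change of variables \eqref{u to v} (with $t=T(1-e^{-s})$) the conformal factors and the Jacobian $|J_F|$ cancel exactly, and since $N+1=2^*(\sigma)$, $m(N+1)=m+1$, $\frac{m+1}{1-m}=\frac{n}{2\sigma}$, one gets
\[
V(s)=(T-t)^{-n/(2\sigma)}\,y(t),\qquad y(t):=\int_{\R^n}u(x,t)^{m+1}\,\ud x .
\]
So it suffices to show $c_1(T-t)^{n/(2\sigma)}\le y(t)\le c_2(T-t)^{n/(2\sigma)}$ for $t$ near $T$ (equivalently $s\ge s^*/2$). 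Once $V$ is trapped between two positive constants, the Harnack inequality of Theorem \ref{thm:harnack}, applied to $v$ — which solves \eqref{fpme for v}, that is \eqref{Y2} with the \emph{constant} $b\equiv\frac{1}{1-m}\in C([0,\infty))$, and with Harnack constant $C_H$ depending only on the smooth positive datum $v(\cdot,s^*/2)$ — gives $\max_{\Sn}v(\cdot,s)\le C_H\min_{\Sn}v(\cdot,s)$; combining this with $c_1\le|\Sn|(\max v)^{N+1}$ and $|\Sn|(\min v)^{N+1}\le c_2$ yields $\beta_1\le v(\cdot,s)\le\beta_2$ for $s\ge s^*/2$. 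The global smoothness then follows from Proposition \ref{infinity norm controls all} together with Proposition \ref{STE}, exactly as in the proof of Theorem \ref{convergence of fractional yamabe flow on spheres} (the uniform $C^k$ bounds preclude a finite maximal time of existence).

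The sharp decay of $y$ comes from the monotonicity of the conformally invariant Sobolev quotient along the flow. Set $F(t):=\|u^m(\cdot,t)\|_{\dot H^{\sigma}}^2/\|u^m(\cdot,t)\|_{L^{2^*(\sigma)}}^2$; by \eqref{P-sigma to frac lap} this equals $\big(\int_{\Sn}vP_{\sigma}(v)\,\ud vol\big)\big/\big(\int_{\Sn}v^{N+1}\,\ud vol\big)^{(n-2\sigma)/n}$, and by \eqref{eq:fs} satisfies $F(t)\ge S_{n,\sigma}^{-1}$. The energy identity for \eqref{fpme} reads
\[
\frac{\ud y}{\ud t}=-(m+1)\|u^m\|_{\dot H^{\sigma}}^2=-(m+1)\,F(t)\,y(t)^{\frac{n-2\sigma}{n}},\qquad\text{so}\qquad
\frac{\ud}{\ud t}\big(y^{2\sigma/n}\big)=-\tfrac{2\sigma}{n}(m+1)\,F(t).
\]
The identities $m(N+1)=m+1$ and $(m+1)\tfrac{n-2\sigma}{n}=2m$ make the Cauchy–Schwarz inequality $\big(\int u^m(-\Delta)^{\sigma}u^m\big)^2\le\big(\int u^{m+1}\big)\big(\int u^{m-1}((-\Delta)^{\sigma}u^m)^2\big)$ force $\frac{\ud}{\ud t}\log F\le0$, so $F$ is non-increasing; hence $S_{n,\sigma}^{-1}\le F(t)\le F(t_0)<\infty$ for $t\ge t_0:=T(1-e^{-s^*/2})$, with $F(t_0)$ finite since $v(\cdot,s^*/2)$ is smooth and positive on $\Sn$. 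Integrating from $t$ to $T$ and using the extinction $y(T)=0$ gives $y(t)^{2\sigma/n}=\tfrac{2\sigma}{n}(m+1)\int_t^{T}F(\tau)\,\ud\tau$, which together with $S_{n,\sigma}^{-1}(T-t)\le\int_t^{T}F\le F(t_0)(T-t)$ yields $y(t)\asymp(T-t)^{n/(2\sigma)}$ for $t\ge t_0$, hence $V(s)$ bounded above and below away from $0$ for $s\ge s^*/2$. (The same monotonicity computation can be done directly on $\Sn$ from \eqref{fpme for v}, using \eqref{description of P sigma} and $P_{\sigma}$ self-adjoint.)

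I expect the main obstacle to be technical rather than structural: one must legitimise the energy framework — absolute continuity of $y(t)$, the identity for $\ud y/\ud t$, and above all the membership $u^m(\cdot,t)\in\dot H^{\sigma}(\R^n)$ for every $t<T$, which requires the spatial decay $u^m(x,t)=O(|x|^{-(n-2\sigma)})$. This is precisely where the hypothesis that $(u_0^m)_{0,1}(x)=|x|^{2\sigma-n}u_0^m(x/|x|^2)$ extends to a positive $C^2$ function near the origin is used, propagated forward to $t>0$ by a comparison argument; it is also what guarantees $v(\cdot,0)\in C^2(\Sn)$ so that Proposition \ref{STE} applies. One further needs $\lim_{t\to T^-}y(t)=0$, which follows from the finite-time extinction result for \eqref{fpme} in \cite{dQRV}. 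Granting these, the monotonicity of $F$ and the resulting extinction rate for $y$, hence the boundedness of $V$ and of $v$, go through as above.
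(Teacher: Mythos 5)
Your approach is genuinely different from the paper's and contains a structural gap. The paper's proof is barrier-based: Step~1 constructs an explicit supersolution $U(x,t)=K^{1/m}[(1+s_\varepsilon-\log T+\log(T-t))(T-t)]_+^{1/(1-m)}\bigl(\tfrac{2}{1+|x|^2}\bigr)^{(n+2\sigma)/2}$ which extincts strictly before $T$, to force $\max_{\Sn}v(\cdot,s)>\kappa_1$; Step~2 uses a Barenblatt-type barrier $(M-t)_+^{1/(1-m)}k(n,\sigma)(1+|x|^2)^{-(n+2\sigma)/2}$ plus Harnack and Propositions~\ref{infinity norm controls all}, \ref{STE} to extend smoothness past any putative finite $s_0$; Step~3 uses another explicit barrier to bound $\min_{\Sn}v$ from above. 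All three steps invoke only the comparison principle (Theorem~6.2 of \cite{dQRV}) for the strong solution $u$, which is available without any a priori smoothness of $v$.

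Your argument replaces all of this with an energy/Sobolev-quotient monotonicity computation, and here is where the gap lies. The identities you need --- $y'(t)=-(m+1)\|u^m(\cdot,t)\|_{\dot H^\sigma}^2$, and above all $\tfrac{\ud}{\ud t}\log F\le 0$ which involves what is effectively $y''$ and a Plancherel manipulation --- are precisely the computations the paper performs in Lemma~\ref{estimate of extinction time} and Proposition~\ref{increase H}, and the paper states explicitly in the proof of Proposition~\ref{increase H} that their justification \emph{uses} the smoothness and positivity of $v$ on $\Sn\times(0,\infty)$, i.e., the content of Theorem~\ref{convergence of v} (hence of this very proposition). Moreover, integrating $(y^{2\sigma/n})'=-\tfrac{2\sigma}{n}(m+1)F$ from $t$ to $T$ requires knowing $F$ on the \emph{entire} interval $(t,T)$, i.e., $v$ regular for all $s\ge s^*/2$; a bootstrap on $[s^*/2,s_0)$ with $s_0<\infty$ does not give you that integral. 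So the argument is circular as it stands: the energy framework you want to ``legitimise'' is not a peripheral technicality but is exactly what Proposition~\ref{universal estimate} must establish first, and the barriers are what break this circularity. A secondary but real issue is the restriction $n>4\sigma$: your $\R^n$ computation of $\tfrac{\ud}{\ud t}\log F$ uses the same Plancherel step that forces $n>4\sigma$ in Lemma~\ref{estimate of extinction time}, whereas Proposition~\ref{universal estimate} is stated (and used, e.g., in Theorem~\ref{extinction profile}) with no such restriction; switching to the $\Sn$ picture avoids the decay issue but not the circularity just described. Your reduction of the two-sided $L^\infty$ bound to a two-sided bound on $V(s)$ via Harnack, and the final smoothness step via Propositions~\ref{infinity norm controls all}, \ref{STE}, are both correct and match the paper's Step~2; what is missing is a non-circular way to trap $V(s)$, which is exactly what the paper's explicit barriers supply.
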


\begin{proof}
Step1: We show that if $s_0$ is such that $v$ is positive and smooth in $\Sn\times (s^*/2,s_0)$, then there is a positive constant $\kappa_1$, independent of $s_0$, such that for all $s\in (s^*/2,s_0)$
\be\label{max lower bound}
\max_{\Sn} v(\cdot,s)>\kappa_1.
\ee
Let us argue by contradiction. If this is not true, then for every small $\va>0$, there is an $s_{\va}$ such that  $s_0>s_{\va}>s^*/2$ and $v(y,s_{\va})<\va$ for all $y\in \Sn$. Given $\va>0$, consider
\[
U(x,t)=K^{1/m}[(1+s_{\va}-\log T+\log (T-t))(T-t)]_+^{\frac{1}{1-m}}\left(\frac{2}{1+\abs x^2}\right)^{\frac{n+2\sigma}{2}},
\]
where $K$ will be chosen later. Direct computations yield that
\[
\begin{split}
&U_t-(\Delta)^{\sigma} U^{\frac{n-2\sigma}{n+2\sigma}}\\
&=\quad K^{\frac{1}{m}}[(1+s_{\va}-\log T+\log (T-t))(T-t)]_+^{\frac{m}{1-m}}\left(\frac{2}{1+\abs x^2}\right)^{\frac{n+2\sigma}{2}}\\
&\quad\quad \cdot  \left(\log T-\log(T-t)-2-s_{\va}+P_{\sigma}(1)K^{1-1/m}\right),
\end{split}
\]
where we used that $(-\Delta)^{\sigma}\left(\frac{2}{1+\abs x^2}\right)^{\frac{n-2\sigma}{2}}=P_{\sigma}(1)\left(\frac{2}{1+\abs x^2}\right)^{\frac{n+2\sigma}{2}}$ with $P_{\sigma}(1)$ given in \eqref{value on spherical harmonics}.

Let $t_{\va}$ be that $s_{\va}-\log T+\log (T-t_{\va})=0$. We choose $K$ small such that $P_{\sigma}(1)K^{\frac{m-1}{m}}>2$ and let $\va=K$. Since $v(y,s_{\va})<\va$,
\[
u(x,t_{\va})<{\va}^{1/m}(T-t_{\va})^{\frac{1}{1-m}}\left(\frac{2}{1+\abs x^2}\right)^{\frac{n+2\sigma}{2}}=U(x,t_{\va}).
\]
For $t>t_{\va}$, $U(x,t)$ is a supersolution of \eqref{fpme}. It follows from the comparison principle (see the proof of Theorem 6.2 in \cite{dQRV}) that $u(x,t)\leq U(x,t)$. But $U$ vanishes before $T$. Hence, $u$ vanishes before $T$, which contradicts the definition of the extinction time $T$. 

\medskip

Step 2: $v$ is strictly positive and smooth for $s^*/2<s<\infty$.

To show this, we define
\[
s_0=\sup\{s>0: v\in C^{3,1}(\Sn\times(s^*/2,s))\}.
\]
Note that $s_0\geq s^*$. We assume that $s_0<\infty$. Since $v\in C^{3,1}(\Sn\times(s^*/2,s_0))$ and $v$ is positive, by Theorem \ref{thm:harnack} and step 1 we have that $v$ is uniformly lower bounded away from 0. We define
\[
U(x,t)=(M-t)_+^{1/(1-m)} k(n,\sigma)\left(\frac{1}{1+\abs x^2}\right)^{\frac{n+2\sigma}{2}},
\]
where $k(n,\sigma)$ is defined in Theorem \ref{extinction profile}. $U(x,t)$ satisfies \eqref{fpme} and will be used as a barrier function.
By our assumptions on $u_0$,  we choose sufficiently large $M>T$ such that
\[
u_0(x)\leq M^{1/(1-m)} k(n,\sigma)\left(\frac{1}{1+\abs x^2}\right)^{\frac{n+2\sigma}{2}}.
\]
 It follows from comparison principle (Theorem 6.2 in \cite{dQRV}) that for all $0<t<T$,
\[
u(x,t)\leq (M-t)^{1/(1-m)} k(n,\sigma)\left(\frac{1}{1+\abs x^2}\right)^{\frac{n+2\sigma}{2}}.
\]
Hence, for all $s^*/2\leq s\leq s_0$
\[
v(y,s)\leq \left(\frac{T+(M-T)e^s}{T}\right)^{\frac{m}{1-m}}k(n,\sigma)^m\leq\left(\frac{T+(M-T)e^{s_0}}{T}\right)^{\frac{m}{1-m}}k(n,\sigma)^m.
\]
It follows that $v$ is uniformly bounded from above. Since $v$ satisfies \eqref{fpme for v}, Proposition \ref{infinity norm controls all} implies that $v$ has a uniform limit as $s\to s_0$ which is also positive and smooth. By Proposition \ref{STE} $v$ can be extended in a smooth and positive way beyond $s_0$, which violates the definition of $s_0$. We conclude that $s_0=+\infty$.

\medskip

Step 3: There is a constant $\kappa_2=(1+P_{\sigma}(1)(1-m))^{m/(1-m)}>0$ such that for all $s>0$
\[
\min_{\Sn}v(y,s)\leq\kappa_2.
\]
We argue by contradiction. Suppose that there is a time $\bar s<\infty$ for which
\[
\min_{\Sn}v(y,\bar s)>\kappa_2.
\]
This implies
\[
u(x,\bar t)\geq (T-\bar t+P_{\sigma}(1)(1-m)(T-\bar t))^{1/(1-m)}\left(\frac{1}{1+\abs x^2}\right)^{\frac{n+2\sigma}{2}},
\]
where $\bar t=T(1-e^{-\bar s})<T$.
We consider a barrier function
\[
U(x,t)=\left(T-\bar t+P_{\sigma}(1)(1-m)(T-t)\right)^{\frac{1}{1-m}}\left(\frac{1}{1+\abs x^2}\right)^{\frac{n+2\sigma}{2}},
\]
which satisfies \eqref{fpme}. Since $u(x,\bar t)\geq U(x,\bar t)$, by the comparison principle
\[
u(x,\bar t)\geq \left(T-\bar t+P_{\sigma}(1)(1-m)(T-t)\right)^{\frac{1}{1-m}}\left(\frac{1}{1+\abs x^2}\right)^{\frac{n+2\sigma}{2}}.
\]
This contradicts the extinction time $T$ of $u$.

From Steps 1, 2 and 3 we can conclude Proposition \ref{universal estimate} by taking $\beta_2=C\kappa_2$ and $\beta_1=\kappa_1/C$ where $C$ is the constant in Theorem \ref{thm:harnack} for $s_0=\infty$.
\end{proof}

Now we are in the position to prove Theorem \ref{convergence of v}. Let $J$ be the functional defined as
\[
J(z)=\frac{1}{2}\int_{\Sn}zP_{\sigma}(z)-\frac{1}{(1-m)(N+1)}\int_{\Sn} z^{N+1}.
\]
Direct computations yield
\begin{lem}\label{decreasing energy}
Let $v(x,s)$ satisfy \eqref{fpme for v}. Then
\[
\frac{\ud}{\ud s}J(v(\cdot,s))=-N\int_{\Sn}v^{N-1}(v_s)^2\leq 0.
\]
\end{lem}

The above lemma indicates that the functional is decreasing in time. The next lemma states that this functional is always nonnegative, and hence $\lim\limits_{s\to\infty}J(v(\cdot,s))$ exists.
\begin{lem}\label{positive energy}
$J(v(\cdot,s))\geq 0$ for all $s>0$.
\end{lem}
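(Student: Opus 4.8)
The plan is to reduce the statement to the Hardy--Littlewood--Sobolev / Sobolev inequality on $\Sn$ together with the universal bounds of Proposition~\ref{universal estimate}. First, I would observe that, since $v(\cdot,s)$ is positive and smooth and satisfies \eqref{fpme for v} on the compact manifold $\Sn$ with $P_\sigma$ positive and self-adjoint, the two quantities appearing in $J$ are finite and $\int_{\Sn} v P_\sigma(v) = \|v\|_{\dot H^\sigma(\Sn)}^2 \ge 0$. The Sobolev inequality on $\Sn$ associated to $P_\sigma$ (equivalently, via stereographic projection and \eqref{P-sigma to frac lap}, the Euclidean inequality \eqref{eq:fs}) gives a constant $C_S>0$ with
\[
\left(\int_{\Sn} v^{N+1}\,\ud vol_{g_{\Sn}}\right)^{\frac{2}{N+1}} \le C_S \int_{\Sn} v P_\sigma(v)\,\ud vol_{g_{\Sn}},
\qquad N+1 = \frac{2n}{n-2\sigma},
\]
and $C_S$ is attained exactly on the family of solutions of $P_\sigma(w) = \text{const}\cdot w^N$. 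The steady equation \eqref{steady equation of v} is precisely this extremal equation with constant $\frac{1}{1-m}$, so the bound \eqref{eq:fs} becomes an equality along any steady solution $\bar v$.

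The key step is then to track the energy backwards in time toward the extinction time. By Lemma~\ref{decreasing energy}, $s \mapsto J(v(\cdot,s))$ is nonincreasing, so $J(v(\cdot,s)) \ge \lim_{s\to\infty} J(v(\cdot,s))$ is false in the direction I need; instead I would use monotonicity in the other direction: $J(v(\cdot,s)) \le J(v(\cdot,s_0))$ for $s \ge s_0$ is also the wrong sign. The correct route is to bound $J$ from below \emph{uniformly in $s$} using the scaling structure. Writing $I(s) = \int_{\Sn} v P_\sigma(v)$ and $A(s) = \int_{\Sn} v^{N+1}$, one has $J = \tfrac12 I - \tfrac{1}{(1-m)(N+1)} A$ and, from \eqref{fpme for v} after multiplying by $v$ and integrating, a differential relation linking $\tfrac{d}{ds}\int_{\Sn} v^{N+1}$ to $I$ and $A$. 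Combining this ODE relation with the Sobolev inequality $A^{2/(N+1)} \le C_S I$ yields a differential inequality for $A(s)$ (or for the ratio $Q(s) = A^{2/(N+1)}/I$) that forces $J \ge 0$: concretely, if $J(v(\cdot,s_1)) < 0$ at some $s_1$, then monotonicity of $J$ plus the universal bounds $\beta_1 \le v \le \beta_2$ (Proposition~\ref{universal estimate}) would make $\int_{\Sn} v^{N+1}$ grow without bound or decay to zero, contradicting those same bounds on the compact manifold $\Sn$ of finite volume. I expect the cleanest formulation is: show $J(v(\cdot,s))\to 0$ as $s\to\infty$ along the convergent subsequence guaranteed by Proposition~\ref{universal estimate} and the Arzel\`a--Ascoli argument (the limit being a steady state $\bar v$, on which $J(\bar v)=\tfrac12 I(\bar v) - \tfrac{1}{(1-m)(N+1)}A(\bar v)$, and $\bar v$ satisfies \eqref{steady equation of v} so $I(\bar v) = \tfrac{1}{1-m}A(\bar v)$, giving $J(\bar v) = \left(\tfrac12 - \tfrac{1}{N+1}\right)\tfrac{1}{1-m}A(\bar v) = \tfrac{N-1}{2(N+1)(1-m)}A(\bar v) \ge 0$ since $N>1$); then Lemma~\ref{decreasing energy} gives $J(v(\cdot,s)) \ge \lim_{s\to\infty} J(v(\cdot,s)) = J(\bar v) \ge 0$ for all $s$.

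The main obstacle is that this last argument appears circular: Theorem~\ref{convergence of v} (existence of the limit $\bar v$) is what Lemma~\ref{positive energy} is being used to prove. To avoid the circularity I would instead extract only the \emph{subconvergence} that is already available from Proposition~\ref{universal estimate} and Proposition~\ref{infinity norm controls all} (uniform $C^k$ bounds on $\Sn \times [s^*/2,\infty)$ give, by Arzel\`a--Ascoli, a sequence $s_j \to \infty$ with $v(\cdot,s_j) \to v_\infty$ in $C^3$), and note that Lemma~\ref{decreasing energy} forces $\int_{s^*/2}^\infty \int_{\Sn} v^{N-1} v_s^2 < \infty$, whence $v_\infty$ is a steady state and $J(v_\infty) \ge 0$ by the computation above. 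Since $J(v(\cdot,s))$ is nonincreasing and bounded below along $s_j$ by $J(v_\infty)$, it follows that $J(v(\cdot,s)) \ge J(v_\infty) \ge 0$ for every $s > 0$. The delicate points to get right are the integration-by-parts identity for $\tfrac{d}{ds}\int v^{N+1}$ (needing the regularity from Proposition~\ref{universal estimate}), passing $J$ to the $C^3$ limit (continuity of $z \mapsto \int z P_\sigma(z)$ and $\int z^{N+1}$ in $C^{2\sigma+}$, immediate here), and confirming $\lim_{s\to\infty} J(v(\cdot,s))$ exists so that the subsequential value equals the full limit.
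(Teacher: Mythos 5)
Your proposal is correct, but it takes a genuinely different and somewhat heavier route than the paper. The paper's argument is a short contradiction: if $J(v(\cdot,s_0))<0$, then $J(v(\cdot,s))<0$ for all $s>s_0$ by monotonicity, and differentiating $F(s)=\int_{\Sn}v^{N+1}$ along the flow gives
\[
\tfrac{N}{N+1}F'(s)=-2J(v(\cdot,s))+\tfrac{N-1}{(1-m)(N+1)}F(s)\geq F(s)
\]
(using $m=1/N$), so $F$ grows exponentially, contradicting the uniform upper bound on $v$ from Proposition~\ref{universal estimate}. This needs only the $L^\infty$ bounds. Your route instead uses the full $C^k$ compactness (Propositions~\ref{universal estimate} and~\ref{infinity norm controls all}) to see that $J$ is bounded, hence convergent to some $L$; Lemma~\ref{decreasing energy} then gives $\int_{s^*/2}^\infty\int_{\Sn}v^{N-1}v_s^2<\infty$, producing $s_j\to\infty$ with $v(\cdot,s_j)\to v_\infty$ in $C^3$ and $v_s(\cdot,s_j)\to 0$ in $L^2$, so $v_\infty$ is a steady state; finally $J(v_\infty)=\tfrac{N-1}{2(N+1)(1-m)}\int_{\Sn}v_\infty^{N+1}\geq 0$ and $J(v(\cdot,s))\geq L=J(v_\infty)\geq 0$. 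You correctly identified and avoided the potential circularity with Theorem~\ref{convergence of v} by using only subconvergence. Your route buys the identification of the exact limiting value of $J$ at the cost of higher-order compactness; interestingly, you gesture at the paper's actual mechanism in the middle of your first paragraph (``$J<0$ would force $\int v^{N+1}$ to grow'') but do not pursue the ODE for $F$ that makes it precise.
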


\begin{proof}
The proof is similar to that of Lemma 6.1 in \cite{ds}, which is included here for completeness. We argue by contradiction. Assume that for certain $0<s_0<\infty$ one has $J(v(\cdot,s_0))< 0$. By Lemma \ref{decreasing energy} $J(v(\cdot,s))< 0$ for all $s>s_0$. Let us consider the quantity
\[
F(s)=\int_{\Sn}v^{N+1}(y,s) \ud y\geq 0,\quad s\in(0,\infty).
\]
Then
\[
\begin{split}
\frac{N}{N+1}\frac{\ud}{\ud s}F(s)
=\int_{\Sn}(v^N)_s v
&=-2J(v(\cdot,s))+\frac{N-1}{(1-m)(N+1)}F(s)\\
&\geq \frac{N-1}{(1-m)(N+1)}F(s)
\end{split}
\]
for all $s>s_0$. Note that $F(s)\neq 0$ for all $s\geq s_0$. Otherwise, $v(\cdot,s)\equiv 0$ which is impossible because $J(v(\cdot,s))\leq J(v(\cdot,s_0))< 0$. Integrating the above differential inequality, we have
\[
F(s)\geq F(s_0)e^{s-s_0}.
\]
 It follows that $F(s)\to \infty$ as $s\to \infty$. On the other hand, Proposition \ref{universal estimate} implies that $v$ is uniformly bounded. Consequently, $F(s)$ is bounded. We reach a contradiction.
\end{proof}

\begin{proof}[Proof of Theorem \ref{convergence of v}]
It follows from Proposition \ref{universal estimate} 
and Proposition \ref{infinity norm controls all} 
that for $s>s^*/2$, $v(\cdot,s)$ is compact in $C^k(\Sn)$ for any $k$. Let $\bar v$ be a limit point of $v(\cdot,s)$ as $s\to\infty$ in the $C^2$ sense. We will show that $\bar v$ is a solution of \eqref{steady equation of v} and $\bar v$ is the unique limit of $v(\cdot,s)$ as $s\to\infty$.

Suppose that along a sequence $s_j\to\infty$, $v(\cdot,s_j)\to\bar v$ in $C^2(\Sn)$. Since
\[
\frac{\ud}{\ud s}J(v(\cdot,s))=-N\int_{\Sn}v^{N-1}v_s^2=-\frac{4N}{(N+1)^2}\int_{\Sn}\abs{(v^{(N+1)/2}(\cdot,s))_s},
\]
we have, by integrating from $s_j$ to $s_j+\tau$ and using the Cauchy-Schwarz inequality,
\[
\begin{split}
&\int_{\Sn}\abs{v^{\frac{N+1}{2}}(\cdot,s_j+\tau)-v^{\frac{N+1}{2}}(\cdot,s_j)}^2\\
&\quad\leq \frac{(N+1)^2\tau}{4N}\big(J(v(\cdot,s_j))-J(v(\cdot,s_j+\tau))\big).
\end{split}
\]
By Lemma \ref{decreasing energy} and Lemma \ref{positive energy}, $J(v(\cdot,s))$ has a limit as $s\to\infty$. Hence for each $\tau>0$, $\{v(\cdot,s_j+\tau)\}_{1}^{\infty}$ is Cauchy in $L^{N+1}$. It follows that $v(\cdot,s_j+\tau)\to\bar v$ in $L^{N+1}$, and in $C^2(\Sn)$ uniformly in $\tau$ for $\tau$ in bounded intervals. Thus, for any $\phi\in C^{\infty}(\Sn)$ we have, 
\[
\begin{split}
&\int_{\Sn}\big(v^N(\cdot,s_j+1)-v^N(\cdot,s_j)\big)\phi\\
&\quad=\int_0^1\int_{\Sn}\left(-P_{\sigma}\big(v(y,s_j+\tau)\big)+\frac{1}{1-m}v^N(y,s_j+\tau)\right)\phi\ud y\ud\tau .
\end{split}
\]
After sending $j\to\infty$, we obtain
\[
\int_{\Sn}\left(-P_{\sigma}(\bar v)+\frac{1}{1-m}\bar v^N\right)\phi=0,
\]
i.e.,  $\bar v$ solves \eqref{steady equation of v}. Finally, it follows from Theorem \ref{convergence of flow} that $v(\cdot,s)$ converges to $\bar v$ in $C^3(\Sn)$.
\end{proof}
\begin{proof}[Proof of Theorem \ref{extinction profile}]
By the classification of solutions of \eqref{steady equation of v} in \cite{CLO} and \cite{Li04}, Theorem \ref{extinction profile} follows from Theorem \ref{convergence of v} immediately.
\end{proof}

From Theorem \ref{extinction profile} we see that the extinction profile of $u(x,t)$ is determined by the pair of numbers $(\lda, x_0)=(\lda (u_0), x_0(u_0))$. The next theorem verifies the stability of both the extinction time and the extinction profile.

\begin{thm}\label{stability}
$T(u_0),\lda (u_0)$ and $x_0(u_0)$ continuously depend on $u_0$ in the sense that if
$u_0$, $\{u_{0;j}\}$ are positive $C^2$ functions in $\R^n$, $(u_0^m)_{0,1}, (u_{0;j}^m)_{0,1}$ can be extended to positive $C^2$ functions near the origin, and $\lim\limits_{j\to\infty} \|u^m_{0;j}-u^m_0\|_{b}=0$ where $\|\cdot\|_b$ is defined by
\[
\|\cdot\|_b=\|\cdot\|_{C^2(B_2)}+\|(\cdot)_{0,1}\|_{C^2(B_2)},
\]
then
\[
\lim_{j\to\infty}(T(u_{0;j}),\lda (u_{0;j}), x_0(u_{0;j}))=(T(u_0),\lda (u_0), x_0(u_0)).
\]
\end{thm}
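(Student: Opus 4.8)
The plan is to leverage the uniqueness of the extinction profile obtained in Theorem \ref{convergence of v} together with the continuous dependence of the fractional fast diffusion flow on its initial data. First I would translate the problem to the sphere: for each $j$, define $v_j(F(x),s)$ from $u_{0;j}$ via relation \eqref{u to v} with extinction time $T(u_{0;j})$, and similarly $v$ from $u_0$. The hypothesis $\|u_{0;j}^m - u_0^m\|_b \to 0$ is precisely designed so that $v_j(\cdot,0)\to v(\cdot,0)$ in $C^2(\Sn)$; note the weight $(1+|x|^2)^{(n-2\sigma)/2}/2$ in \eqref{u to v} combined with the inversion $(u_0^m)_{0,1}$ controls the behavior near the north pole $\mathcal N$, so $v_j(\cdot,0)$ is uniformly positive and bounded in $C^2(\Sn)$. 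By Proposition \ref{universal estimate} applied uniformly (the constants $\beta_1,\beta_2$ there depend only on controlled quantities of the initial data), together with Proposition \ref{infinity norm controls all}, the family $\{v_j\}$ enjoys uniform $C^k(\Sn\times[s^*/2,\infty))$ bounds for every $k$.

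\medskip

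The first thing to establish is $T(u_{0;j})\to T(u_0)$. I would do this by a barrier/comparison argument as in the proof of Proposition \ref{universal estimate}: Steps 2 and 3 there give explicit two-sided bounds on $T(u_0)$ in terms of $M$ (chosen so that $u_0(x)\le M^{1/(1-m)}k(n,\sigma)(1+|x|^2)^{-(n+2\sigma)/2}$) and in terms of a lower barrier built from $\kappa_2$. Since $u_{0;j}^m\to u_0^m$ in $\|\cdot\|_b$, these barriers can be chosen with constants converging to those for $u_0$, giving $\limsup_j T(u_{0;j})\le T(u_0)$ and $\liminf_j T(u_{0;j})\ge T(u_0)$; continuity of the solution map of \eqref{fpme} from \cite{dQRV} upgrades this. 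Once $T(u_{0;j})\to T(u_0)$, the rescaling \eqref{u to v} is stable, and by uniqueness in \eqref{fpme} the sphere solutions satisfy $v_j\to v$ in $C^k_{loc}(\Sn\times(0,\infty))$ for every $k$.

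\medskip

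Next I would pass to the limit $s\to\infty$ uniformly in $j$. By Theorem \ref{convergence of v}, each $v_j(\cdot,s)\to\bar v_j$ in $C^3(\Sn)$, where $\bar v_j$ is the steady state determined by the pair $(\lda(u_{0;j}),x_0(u_{0;j}))$ via the classification in \cite{CLO,Li04}. The key quantitative input is the rate of convergence furnished by L. Simon's uniqueness theorem in its nonlocal form (Theorem \ref{convergence of flow}/\ref{convergence of flow 2} in the Appendix): the decay of $\|v_j(\cdot,s)-\bar v_j\|_{C^3}$ is controlled in terms of the uniform energy bounds and the uniform $C^k$ bounds above, hence is \emph{uniform in $j$}. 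Therefore, given $\varepsilon>0$, I can pick $S$ with $\|v_j(\cdot,S)-\bar v_j\|_{C^3}<\varepsilon$ for all $j$; since $v_j(\cdot,S)\to v(\cdot,S)$ and $\|v(\cdot,S)-\bar v\|_{C^3}<\varepsilon$ as well, we get $\|\bar v_j-\bar v\|_{C^3}<3\varepsilon$ for $j$ large. Finally, the map from a steady state $\bar v$ (a standard bubble) to its parameters $(\lda,x_0)$ is continuous — indeed smooth — on the set of such bubbles, so $\bar v_j\to\bar v$ in $C^3$ forces $(\lda(u_{0;j}),x_0(u_{0;j}))\to(\lda(u_0),x_0(u_0))$. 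Combined with $T(u_{0;j})\to T(u_0)$ this completes the proof.

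\medskip

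I expect the main obstacle to be the \emph{uniformity in $j$ of the convergence rate} as $s\to\infty$: a bare application of Theorem \ref{convergence of v} gives convergence for each fixed $j$ with no control as $j$ varies. Making this uniform requires checking that every constant entering the \L ojasiewicz--Simon inequality in Appendix \ref{appendix a} — the \L ojasiewicz exponent and constant near the manifold of steady states, the energy gap, and the a priori $C^k$ bounds — depends only on the uniformly controlled data $(\beta_1,\beta_2$ from Proposition \ref{universal estimate}, $r_\sigma^{g(1)}$, etc.$)$ and not on $j$ individually. Once that bookkeeping is done, the three-$\varepsilon$ argument above closes everything.
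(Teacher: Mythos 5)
Your overall strategy matches the paper's, which simply cites Theorem \ref{l}, Lemma \ref{d/dJ} and Theorem \ref{convergence of flow} and then refers to del Pino--S\'aez for the details. The three-$\varepsilon$ argument via a uniform \L ojasiewicz--Simon rate is the correct skeleton, and you have correctly identified where the real bookkeeping is: the constants in Theorem \ref{l} and Lemma \ref{d/dJ} must be checked to depend only on the uniform bounds $\beta_1,\beta_2$ from Proposition \ref{universal estimate} (which themselves depend on quantities controlled by $\|\cdot\|_b$) and not on $j$ separately. You flag this as the main obstacle without closing it; that is an honest assessment, but since the paper leaves the same details to \cite{ds}, this is acceptable.

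There is, however, a concrete error in your argument for $T(u_{0;j})\to T(u_0)$. Steps 2 and 3 of the proof of Proposition \ref{universal estimate} do \emph{not} give ``two-sided bounds on $T(u_0)$''. Step 2 uses a supersolution $U$ with a parameter $M>T$ chosen large enough to dominate $u_0$ and yields an upper bound $T\leq M$, while Step 3 yields the bound $\min_{\Sn}v(\cdot,s)\leq\kappa_2$, a bound on $v$ rather than on $T$. Even with a matching subsolution barrier one only obtains $M'\leq T\leq M$ with strict gaps, and having $M_j\to M$, $M'_j\to M'$ does not pinch $T_j$ onto $T$. The correct tool is Lemma \ref{estimate of extinction time}: applied from an interior time $t_0<T$ it gives $T_j-t_0\leq\frac{(n+2\sigma)S_{n,\sigma}}{4\sigma}\big(\int u_j^{m+1}(\cdot,t_0)\big)^{2\sigma/n}$ and also $\int u_j^{m+1}(\cdot,t_0)\geq\big(\frac{4\sigma(T_j-t_0)}{(n+2\sigma)S_{n,\sigma}}\big)^{n/(2\sigma)}$. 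Combining these with the $L^1$/$L^{m+1}$-continuity of the solution map from \cite{dQRV} and the fact that $\int u^{m+1}(\cdot,t_0)\to 0$ as $t_0\to T^-$ gives $\limsup_j T_j\leq T$, while the reverse inequality follows because extinction of $u_j$ at time $T_j<T'<T$ would force $u_j(\cdot,T')\equiv 0$ and contradict $u_j(\cdot,T')\to u(\cdot,T')>0$. Your invocation of ``continuity of the solution map \ldots\ upgrades this'' gestures at the right input but does not explain the mechanism; as written the barrier step is incorrect.
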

\begin{proof}
Given Theorem \ref{l}, Lemma \ref{d/dJ} and Theorem \ref{convergence of flow}, the proof is identical to the proof of Theorem 1.2 in \cite{ds}. We refer to \cite{ds} for details.
\end{proof}

\section{A Sobolev inequality and a Hardy-Littlewood-Sobolev inequality along a fractional diffusion equation}
\label{inequality along flow}

As mentioned in the introduction, the results in this section are inspired by \cite{CCL} and \cite{D}.
\begin{prop}\label{increase H}
Assume that $n\geq 2$. If $u$ is a solution of \eqref{fpme} with positive initial data $u_0\in C^2$ in $\R^n$ satisfying that $(u_0^m)_{0,1}$ can be extended to a positive $C^2$ function near the origin, then
\[
\frac{1}{2}\frac{\ud}{\ud t} H =\left(\int_{\R^n}u^{m+1}\right)^{\frac{2\sigma}{n}}\left(S_{n,\sigma}\|u^m\|^2_{\dot H^{\sigma}}-\|u^m\|^2_{L^{2^*(\sigma)}}\right)\geq 0,
\]
where $H$ is given by \eqref{definition of H}.
\end{prop}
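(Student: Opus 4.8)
The plan is to differentiate $H(t)$ directly, substitute the equation $u_t=-(-\Delta)^{\sigma}u^{m}$ from \eqref{fpme}, and integrate by parts, exploiting the self-adjointness of the Riesz potential together with the identity $(-\Delta)^{\sigma}(-\Delta)^{-\sigma}=\mathrm{id}$ and $\int u^{m}(-\Delta)^{\sigma}u^{m}=\|u^{m}\|^2_{\dot H^{\sigma}}$. Throughout I will use that, for each fixed $t$ before the extinction time, $u(\cdot,t)$ is positive, smooth and satisfies $u(x,t)=O(\abs x^{-n-2\sigma})$ as $x\to\infty$ (this is the content of Theorem \ref{convergence of v}, as already noted in the introduction). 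This decay makes every integrand appearing below decay like $\abs x^{-2n}$, so all the integrals converge and the differentiation under the integral sign and the integrations by parts are legitimate.

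First I would record the two elementary exponent identities that drive the computation: with $p=\tfrac{2n}{n+2\sigma}$ one has $p=m+1$, $p-1=m$, $m\cdot 2^*(\sigma)=p$, and $\tfrac2p-1=\tfrac{2\sigma}{n}$. Writing $H=H_1-H_2$ with $H_1=\int_{\R^n}u(-\Delta)^{-\sigma}u$ and $H_2=S_{n,\sigma}\|u\|^2_{L^{p}}$, self-adjointness of $(-\Delta)^{-\sigma}$ gives $\tfrac{\ud}{\ud t}H_1=2\int u_t(-\Delta)^{-\sigma}u=-2\int\big((-\Delta)^{\sigma}u^{m}\big)(-\Delta)^{-\sigma}u=-2\int u^{m}u=-2\int u^{m+1}$, while, since $p-1=m$, $\tfrac{\ud}{\ud t}H_2=2S_{n,\sigma}\big(\int u^{p}\big)^{\frac2p-1}\int u^{m}u_t=-2S_{n,\sigma}\big(\int u^{p}\big)^{\frac{2\sigma}{n}}\|u^{m}\|^2_{\dot H^{\sigma}}$. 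Assembling, $\tfrac12\tfrac{\ud}{\ud t}H=S_{n,\sigma}\big(\int u^{m+1}\big)^{\frac{2\sigma}{n}}\|u^{m}\|^2_{\dot H^{\sigma}}-\int u^{m+1}$. Finally, using $m\cdot 2^*(\sigma)=p$ and $\tfrac{2}{2^*(\sigma)}=\tfrac{n-2\sigma}{n}$ one rewrites $\int u^{m+1}=\big(\int u^{m+1}\big)^{\frac{2\sigma}{n}}\big(\int u^{m+1}\big)^{\frac{n-2\sigma}{n}}=\big(\int u^{m+1}\big)^{\frac{2\sigma}{n}}\|u^{m}\|^2_{L^{2^*(\sigma)}}$, and factoring out $\big(\int u^{m+1}\big)^{\frac{2\sigma}{n}}$ yields precisely the claimed formula. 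Nonnegativity is then immediate from the fractional Sobolev inequality \eqref{eq:fs} applied to $u^{m}$.

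The computation itself is routine exponent bookkeeping; the only point genuinely requiring care is the rigorous justification of the two integrations by parts and of $\tfrac{\ud}{\ud t}\!\int u(-\Delta)^{-\sigma}u=2\int u_t(-\Delta)^{-\sigma}u$ for a function $u$ with only polynomial decay $O(\abs x^{-n-2\sigma})$. I would handle this by an approximation argument: insert a smooth cutoff $\chi_R$ into $u^{m}$, perform the integrations by parts in the Schwartz class (or for compactly supported smooth functions), and pass to the limit $R\to\infty$, using that $u\in L^1\cap L^{\infty}$ so that $(-\Delta)^{-\sigma}u=O(\abs x^{-(n-2\sigma)})$ and hence $u\,(-\Delta)^{-\sigma}u$, $u^{m+1}$ and $(-\Delta)^{\sigma/2}u^{m}$ are all in $L^1$, together with the continuity-in-time and compactness estimates of Section \ref{section: existence and convergence} to differentiate under the integral. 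This is where the hypotheses on $u_0$ are used, precisely to guarantee the quantitative decay of $u(\cdot,t)$.
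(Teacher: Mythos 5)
Your proposal is correct and follows essentially the same route as the paper: differentiate $H=H_1-H_2$, substitute $u_t=-(-\Delta)^{\sigma}u^{m}$, use self-adjointness of $(-\Delta)^{\pm\sigma}$ together with the exponent identities $m+1=\tfrac{2n}{n+2\sigma}$, $m\cdot 2^{*}(\sigma)=m+1$, and factor out $\bigl(\int u^{m+1}\bigr)^{2\sigma/n}$ before invoking the Sobolev inequality. The paper disposes of the regularity justification with a one-line appeal to the smoothness and positivity established in Theorem~\ref{convergence of v}, whereas you spell out the cutoff/decay argument, but the substance is the same.
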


\begin{proof}
It follows from \eqref{fpme} and \eqref{eq:fs} that
\[
\begin{split}
\frac{\ud}{\ud t} H
&=\int_{\R^n}2u(-\Delta)^{-\sigma}u_t\ud x-2S_{n,\sigma}\left(\int_{\R^n}u^{m+1}\right)^{\frac{2\sigma}{n}}\int_{\R^n}u^{m}u_t\\
%&=\int_{\R^n}-2u(-\Delta)^{-\sigma}(-\Delta)^{\sigma}u^m\ud x+2S_{n,\sigma}\left(\int_{\R^n}u^{m+1}\right)^{\frac{2\sigma}{n}}\int_{\R^n}u^{m}(-\Delta)^{\sigma}u^m\\
&= -2\int_{\R^n}u^{m+1}+2S_{n,\sigma}\left(\int_{\R^n}u^{m+1}\right)^{\frac{2\sigma}{n}}\int_{\R^n}u^{m}(-\Delta)^{\sigma}u^m \\
&=2\left(\int_{\R^n}u^{m+1}\right)^{\frac{2\sigma}{n}}\left(S_{n,\sigma}\|u^m\|^2_{\dot H^{\sigma}}-\|u^m\|^2_{L^{2^*(\sigma)}}\right)\geq 0.
\end{split}
\]
%In the last equality we have used Plancherel Formula since $(-\Delta)^{\sigma}u^m=-u_t\in L^2(\R^n)$ which leads to $\int_{\R^n}u^{m}(-\Delta)^{\sigma}u^m=\|\abs{\xi}^{2\sigma} \mathcal F u^m\|^2_{L^2(\R^n)}=\|u^m\|^2_{\dot H^{\sigma}}$ and $\mathcal F$ denotes the Fourier transform.
Note that the first part of Theorem \ref{convergence of v}, i.e., $v$ defined by \eqref{u to v} is positive and smooth in $\Sn\times(0,\infty)$, has been used in the justifications of these equalities.
\end{proof}

The next lemma gives an estimate for the extinction time of solutions of \eqref{fpme}.
\begin{lem}\label{estimate of extinction time}
If $u$ is a solution of \eqref{fpme} with positive initial data $u_0\in C^2$ in $\R^n$ satisfying that $(u_0^m)_{0,1}$ can be extended to a positive $C^2$ function near the origin, then for any $t\in(0,T)$ we have
\[
\left(\frac{4\sigma(T-t)}{(n+2\sigma)S_{n,\sigma}}\right)^{\frac{n}{2\sigma}}\leq \int_{\R^n} u^{m+1}(t,x) dx\leq
\int_{\R^n}u_0^{m+1} dx.
\]
Consequently, the extinction time $T$ is bounded by
\[
T\leq\frac{(n+2\sigma)S_{n,\sigma}}{4\sigma}\left(\int_{\R^n}u_0 ^{m+1}dx\right)^{\frac{2\sigma}{n}}.
\]
If in addition $n>4\sigma$, then
\[
T\geq \frac{(n+2\sigma)}{2n}\frac{\int_{\R^n}u_0 ^{m+1}dx}{\int_{\R^n}u_0^m(-\Delta)^{\sigma}u_0^m}
\]
and
\[
\begin{split}
\int_{\R^n}u^m(\cdot,t)(-\Delta)^{\sigma}u^m(\cdot,t)&\leq \int_{\R^n}u_0^m(-\Delta)^{\sigma}u_0^m,\\
\int_{\R^n}u^{m+1}(\cdot,t)&\geq\int_{\R^n}u_0^{m+1}-\frac{2n}{n+2\sigma}t\int_{\R^n}u_0^m(-\Delta)^{\sigma}u_0^m.
\end{split}
\]
\end{lem}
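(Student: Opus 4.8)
The plan is to monitor the two functionals
\[
F(t):=\int_{\R^n}u^{m+1}(x,t)\,\ud x,\qquad G(t):=\int_{\R^n}u^m(-\Delta)^\sigma u^m(x,t)\,\ud x=\|u^m(\cdot,t)\|_{\dot H^\sigma}^2
\]
along the flow, and to combine the dissipation identity $F'=-(m+1)G$ with the Sobolev inequality \eqref{eq:fs}. First I would record that $F(t)$ is finite and tends to $0$ as $t\to T$: by Theorem \ref{convergence of v} and the change of variables \eqref{u to v}, $v(\cdot,s)$ is bounded on $\Sn$, so $u^m(x,t)\le C(T-t)^{m/(1-m)}(1+\abs x^2)^{-(n-2\sigma)/2}$ and hence $u^{m+1}(x,t)\le C(T-t)^{n/(2\sigma)}(1+\abs x^2)^{-n}$, which is integrable and has integral tending to $0$. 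Differentiating $F$ and using \eqref{fpme} together with the self-adjointness of $(-\Delta)^{\sigma/2}$ — exactly the computation carried out in the proof of Proposition \ref{increase H} — gives $F'(t)=-(m+1)\int_{\R^n}u^m(-\Delta)^\sigma u^m=-(m+1)G(t)\le0$, which already yields $F(t)\le F(0)=\int_{\R^n}u_0^{m+1}$, the upper bound in the first displayed inequality.

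Because of the algebraic coincidence $m\cdot 2^*(\sigma)=m+1=\frac{2n}{n+2\sigma}$, the Sobolev inequality \eqref{eq:fs} applied to $w=u^m$ reads $F(t)^{(n-2\sigma)/n}=\|u^m\|_{L^{2^*(\sigma)}}^2\le S_{n,\sigma}G(t)$, so that
\[
F'(t)\le-\frac{m+1}{S_{n,\sigma}}\,F(t)^{(n-2\sigma)/n}\qquad\text{on }(0,T).
\]
Rewriting this as $\frac{\ud}{\ud t}F^{2\sigma/n}\le-\frac{4\sigma}{(n+2\sigma)S_{n,\sigma}}$ (here one uses $(m+1)\cdot\frac{2\sigma}{n}=\frac{4\sigma}{n+2\sigma}$) and integrating from $t$ to $T$, with $F(s)\to0$ as $s\to T$, gives $F(t)^{2\sigma/n}\ge\frac{4\sigma(T-t)}{(n+2\sigma)S_{n,\sigma}}$, which is the lower bound in the first displayed inequality; taking $t=0$ yields the claimed bound $T\le\frac{(n+2\sigma)S_{n,\sigma}}{4\sigma}\big(\int_{\R^n}u_0^{m+1}\big)^{2\sigma/n}$.

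For the statements assuming $n>4\sigma$: the role of that hypothesis is that the decay $u_0^m(x)=O(\abs x^{-(n-2\sigma)})$, forced by the assumption that $(u_0^m)_{0,1}$ extends to a positive $C^2$ function near the origin, places $u_0^m$ in $\dot H^\sigma(\R^n)$ exactly when $n>4\sigma$, so that $G(0)<\infty$. I would then differentiate $G$: since $(u^m)_t=-m\,u^{m-1}(-\Delta)^\sigma u^m$ by \eqref{fpme},
\[
G'(t)=2\int_{\R^n}(-\Delta)^\sigma u^m\,(u^m)_t\,\ud x=-2m\int_{\R^n}u^{m-1}\big((-\Delta)^\sigma u^m\big)^2\,\ud x\le0,
\]
so $G$ is nonincreasing on $(0,T)$ and, by continuity of the solution at $t=0$, $G(t)\le G(0)$; this is the first of the last two displayed inequalities. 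Substituting $G(t)\le G(0)$ into $F'(t)=-(m+1)G(t)$ and integrating from $0$ to $t$ gives $F(t)\ge F(0)-(m+1)G(0)t=\int_{\R^n}u_0^{m+1}-\frac{2n}{n+2\sigma}t\int_{\R^n}u_0^m(-\Delta)^\sigma u_0^m$, the last displayed inequality, and letting $t\to T$ so that $F(T)=0$ yields $T\ge\frac{n+2\sigma}{2n}\,\big(\int_{\R^n}u_0^{m+1}\big)\big/\big(\int_{\R^n}u_0^m(-\Delta)^\sigma u_0^m\big)$.

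The routine steps — differentiating $F$ and $G$ under the integral sign, the two self-adjointness identities, and the continuity of $F$ and $G$ at $t=0$ — are all controlled by the uniform decay of $u^m(\cdot,t)$ and the smoothness of $u$ on $\R^n\times(0,T)$ provided by Theorem \ref{convergence of v}. The main obstacle, modest as it is, is verifying that $F(t)\to0$ at the extinction time (and that $G(0)$ is finite), which is precisely where the hypotheses on $u_0$ — and $n>4\sigma$ for the last part — are needed; once these decay facts are in hand, the remainder is the differential-inequality bookkeeping above, with the sharp constant $S_{n,\sigma}$ entering through the identities $m\cdot2^*(\sigma)=m+1$ and $(m+1)\cdot\frac{2\sigma}{n}=\frac{4\sigma}{n+2\sigma}$.
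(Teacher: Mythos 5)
Your proof is correct and follows essentially the same line as the paper: both proceed by differentiating $F(t)=\int u^{m+1}$, invoking the Sobolev inequality to get the differential inequality $F'\le-\frac{m+1}{S_{n,\sigma}}F^{1-2\sigma/n}$, and then exploiting a convexity property (the paper computes $F''\ge0$ directly, you equivalently compute $G'=-(m+1)^{-1}F''$ and show $G$ is nonincreasing). The two presentations are algebraically interchangeable, and your additional remarks about $F(T)=0$, the finiteness of $G(0)$, and the role of $n>4\sigma$ in justifying Plancherel match what the paper asserts more tersely.
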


\begin{proof}
As in the proof of Lemma \ref{positive energy}, we define
\be\label{definition of F}
F(t):=\int_{\R^n}u^{m+1}(x,t)\ud x,
\ee
which is positive in $(0,T)$ and $F(T)=0$. It follows that
\[
\begin{split}
F'(t)&=(m+1)\int_{\R^n}u^m(\cdot,t)u_t(\cdot,t)\\
&=-(m+1)\int_{\R^n}u^m(\cdot,t)(-\Delta)^{\sigma}u^m(\cdot,t)\leq -\frac{m+1}{S_{n,\sigma}} F(t)^{1-\frac{2\sigma}{n}},
\end{split}
\]
where we have used the Sobolev inequality \eqref{eq:fs} in the last inequality.
This shows the first two inequalities by simple integrations.
If in addition $n>4\sigma$, then
\[
\begin{split}
F''(t)&=m(m+1)\int_{\R^n}u^{m-1}(\cdot,t)\big((-\Delta)^{\sigma}u^m(\cdot,t)\big)^2\\
&\quad +m(m+1)\int_{\R^n}u^{m}(\cdot,t)(-\Delta^{\sigma})\big(u^{m-1}(-\Delta)^{\sigma}u^m(\cdot,t)\big)\\
&=2m(m+1)\int_{\R^n}u^{m-1}(\cdot,t)\big((-\Delta)^{\sigma}u^m(\cdot,t)\big)^2\geq 0,
\end{split}
\]
where the condition $n>4\sigma$ is used to guarantee the $L^2$ integrability of $u^m(\cdot,t)$ such that we can use Plancherel's theorem in the second equality.
Thus, the lower bound of $T$ follows from that $0=F(T)\geq F(t)+F'(t)(T-t)$ with sending $t\to 0$. The last two inequalities follows from the sign of $F''$ and simple integrations.
\end{proof}

Let
\be\label{definition of QEG}
Q:=-\frac{1}{m+1}F'F^{\frac{2\sigma-n}{n}}, E:=-\frac{1}{m+1}F'F^{-1}, G(t_1,t_2):=\exp\left((m+1)\int_{t_1}^{t_2}E(s)\ud s\right).
\ee
\begin{thm} Assume $n>4\sigma$. For any $u_0$ positive and  $C^2$ in $\R^n$ satisfying that $(u_0^m)_{0,1}$ can be extended to a positive $C^2$ function near the origin, we have
\[
\begin{split}
S_{n,\sigma}\|u_0\|^2_{L^{\frac{2n}{n+2\sigma}}}&-\int_{\R^n}u_0(-\Delta)^{-\sigma}u_0\ud x +4mS_{n,\sigma}\int_0^T\ud t\int_0^t F(s)^{\frac{2\sigma}{n}}K(s)G(t,s)\ud s\\
&=2\|u_0^m\|_{L^{2^*(\sigma)}}^{\frac{4\sigma}{n-2\sigma}}\left(S_{n,\sigma}\|u_0^m\|^2_{\dot H^{\sigma}}-\|u_0^m\|^2_{L^{2^*(\sigma)}}\right)\int_0^T G(t,0)\ud t,
\end{split}
\]
where $u(\cdot,t)$ is the solution of \eqref{fpme} with initial data $u(\cdot,t)=u_0$, $T$ is the extinction time of $u(\cdot,t)$ and $F, E, G, K$ are defined in \eqref{definition of F}, \eqref{definition of QEG} and \eqref{definition of K}.
\end{thm}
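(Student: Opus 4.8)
The plan is to integrate the monotonicity identity of Proposition~\ref{increase H} from $t=0$ to the extinction time $T$, and then to ``unfold'' the resulting integrand by means of a first-order ODE in $t$ satisfied by $Q$. All the differentiations under the integral sign and manipulations with $(-\Delta)^{\sigma}$ below are legitimate because, for $0<t<T$, $u(\cdot,t)$ is positive, smooth and decays like $|x|^{-(n+2\sigma)}$ at infinity by Theorem~\ref{convergence of v}, while the hypothesis $n>4\sigma$ gives the $L^{2}$-integrability of $u^{m}(\cdot,t)$ used (exactly as in Lemma~\ref{estimate of extinction time}) to differentiate $F$ twice; at $t=0$ one invokes the $C^{2}$-regularity and decay of $u_0$, and at $t=T$ the extinction profile of Theorem~\ref{extinction profile}. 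Throughout I would use the algebraic dictionary: since $m\cdot\frac{2n}{n-2\sigma}=m+1$, one has $\|u^{m}(\cdot,t)\|^{2}_{L^{2^{*}(\sigma)}}=F(t)^{\frac{n-2\sigma}{n}}$, and, writing $\|u^{m}(\cdot,t)\|^{2}_{\dot H^{\sigma}}=\int_{\R^n} u^{m}(-\Delta)^{\sigma}u^{m}=-\frac{1}{m+1}F'(t)$, the definition of $Q$ gives $\|u^{m}(\cdot,t)\|^{2}_{\dot H^{\sigma}}=Q(t)\,F(t)^{\frac{n-2\sigma}{n}}$.

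With this dictionary the Sobolev deficit in Proposition~\ref{increase H} is $S_{n,\sigma}\|u^{m}\|^{2}_{\dot H^{\sigma}}-\|u^{m}\|^{2}_{L^{2^{*}(\sigma)}}=F^{\frac{n-2\sigma}{n}}(S_{n,\sigma}Q-1)$, so Proposition~\ref{increase H} reads $\frac{1}{2}H'(t)=F(t)\,(S_{n,\sigma}Q(t)-1)$. I would next check that $H(t)\to 0$ as $t\to T$: by Theorem~\ref{extinction profile}, $(T-t)^{-1/(1-m)}u(\cdot,t)$ converges, in a norm strong enough to pass to the limit in both $\int_{\R^n}u(-\Delta)^{-\sigma}u$ and $\|u\|^{2}_{L^{2n/(n+2\sigma)}}$, to an extremal $W$ of the Hardy--Littlewood--Sobolev inequality~\eqref{eq:HLS}, for which $H(W)=0$; since $H$ is homogeneous of degree two and $(T-t)^{2/(1-m)}\to 0$, this forces $H(u(\cdot,t))\to 0$. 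Integrating $\frac{1}{2}H'$ over $(0,T)$ therefore yields
\[
S_{n,\sigma}\|u_0\|^{2}_{L^{\frac{2n}{n+2\sigma}}}-\int_{\R^n}u_0(-\Delta)^{-\sigma}u_0=-H(0)=2\int_{0}^{T}F(t)\,(S_{n,\sigma}Q(t)-1)\,\ud t.
\]

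Finally I would differentiate $Q(t)=-\frac{1}{m+1}F'F^{\frac{2\sigma-n}{n}}$. Using $F''(t)=2m(m+1)\int_{\R^n} u^{m-1}((-\Delta)^{\sigma}u^{m})^{2}$ from Lemma~\ref{estimate of extinction time} together with the elementary identity $\frac{n-2\sigma}{n}=\frac{2m}{m+1}$, a short computation gives
\[
Q'(t)=-\tfrac{1}{m+1}F^{\frac{2\sigma-n}{n}}\Big(F''+\tfrac{2\sigma-n}{n}\tfrac{(F')^{2}}{F}\Big)=-2m\,F(t)^{\frac{2\sigma-n}{n}}\,K(t),
\]
where $K(t)$ is the quantity in \eqref{definition of K}, i.e.\ the Cauchy--Schwarz defect $K(t)=\int_{\R^n}u^{m-1}((-\Delta)^{\sigma}u^{m})^{2}-\big(\int_{\R^n}u^{m+1}\big)^{-1}\big(\int_{\R^n}u^{m}(-\Delta)^{\sigma}u^{m}\big)^{2}$. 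Integrating from $0$ to $t$ gives $S_{n,\sigma}Q(t)-1=(S_{n,\sigma}Q(0)-1)-2mS_{n,\sigma}\int_{0}^{t}F(s)^{\frac{2\sigma-n}{n}}K(s)\,\ud s$. Inserting this into the displayed identity, using $F(t)F(s)^{\frac{2\sigma-n}{n}}=F(s)^{\frac{2\sigma}{n}}G(t,s)$ (which is immediate from $E=-\frac{1}{m+1}(\log F)'$, whence $G(t,s)=F(t)/F(s)$), and then substituting $S_{n,\sigma}Q(0)-1=F(0)^{-\frac{n-2\sigma}{n}}\big(S_{n,\sigma}\|u_0^{m}\|^{2}_{\dot H^{\sigma}}-\|u_0^{m}\|^{2}_{L^{2^{*}(\sigma)}}\big)$, $\int_0^T F=F(0)\int_0^T G(t,0)\,\ud t$ and $F(0)^{\frac{2\sigma}{n}}=\|u_0^{m}\|_{L^{2^{*}(\sigma)}}^{\frac{4\sigma}{n-2\sigma}}$, the three terms of the asserted identity fall out. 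The only genuinely non-routine step is the passage $H(t)\to0$ as $t\to T$ --- that is, justifying that the weighted convergence of Theorem~\ref{extinction profile} may be inserted into the two nonlocal quadratic functionals defining $H$; the rest is bookkeeping with the definitions of $F,Q,E,G,K$ and the equation~\eqref{fpme}.
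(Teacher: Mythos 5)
Your proof is correct and is essentially the paper's argument: both start from $H'(t)=2F(t)(S_{n,\sigma}Q(t)-1)$ and the same computation $Q'(t)=-2m\,F^{(2\sigma-n)/n}K(t)$; you simply integrate these two first-order relations directly and substitute (using the explicit formula $G(t,s)=F(t)/F(s)$), whereas the paper packages the identical computation as the second-order ODE $H''=-(m+1)EH'-4mF^{2\sigma/n}S_{n,\sigma}K$ solved by the integrating factor $G(0,\cdot)$ and integrated twice. Your additional justification that $H(t)\to 0$ as $t\to T$ (degree-two homogeneity plus the extinction profile) makes explicit a step the paper uses tacitly when writing ``$0-H(0)$''.
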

\begin{proof}
From the proof of Proposition \ref{increase H} we know that
\[
H'(t)=2F(t)\big(S_{n,\sigma}Q(t)-1\big).
\]
Hence
\[
\begin{split}
H''(t)&=2F'(t)\big(S_{n,\sigma}Q(t)-1\big)+2F(t)S_{n,\sigma}Q'(t)\\
&=\frac{F'(t)}{F(t)}H'(t)+2F(t)S_{n,\sigma}Q'(t)\\
&=-(m+1)E(t)H'(t)+2F(t)S_{n,\sigma}Q'(t).
\end{split}
\]
On the other hand,% \textbf(since $F''$ can be justified)
\[
\begin{split}
Q'(t)&=\frac{F''(t)-\frac{n-2\sigma}{n}F^{-1}(t)\big(F'(t)\big)^2}{-(m+1)F(t)^{\frac{n-2\sigma}{n}}}\\
&=-\frac{2m}{F(t)^{\frac{n-2\sigma}{n}}}\left(\int_{\R^n}u^{m-1}(\cdot,t)\big((-\Delta)^{\sigma}u^m(\cdot,t)\big)^2 -F^{-1}\int_{\R^n}u^m(\cdot,t)(-\Delta)^{\sigma}u^m(\cdot,t)\right)\\
&=-\frac{2m}{F(t)^{\frac{n-2\sigma}{n}}}\int_{\R^n} u(\cdot,t)^{m-1} \abs{-(-\Delta)^{\sigma} u(\cdot,t)^m+E(t) u(\cdot,t)}^2.
\end{split}
\]
Denote
\be\label{definition of K}
K(t):=\int_{\R^n} u(\cdot,t)^{m-1} \abs{-(-\Delta)^{\sigma} u(\cdot,t)^m+E(t) u(\cdot,t)}^2.
\ee
Then
\[
H''(t)=-(m+1)E(t)H'(t)-4mF^{\frac{2\sigma}{n}}(t)S_{n,\sigma}K(t).
\]
Multiplying $G(0,s)$ and integrating from $0$ to $t$, we have
\[
H'(t)G(0,t)-H'(0)G(0,0)=\int_0^t(H'G)'(s)\ud s=-4mS_{n,\sigma}\int_0^t F(s)^{\frac{2\sigma}{n}}K(s)G(0,s)\ud s.
\]
Dividing $G(0,t)$ and integrating from $0$ to $T$, we obtain
\[
0-H(0)=H'(0)\int_0^T G(t,0)\ud t-4mS_{n,\sigma}\int_0^T\ud t\int_0^t F(s)^{\frac{2\sigma}{n}}K(s)G(t,s)\ud s,
\]
which finishes the proof.
\end{proof}

The drawback of the above Theorem is that the extra terms are not explicit. Fortunately, we can use simple estimates to reach Theorem \ref{duality}.

\begin{proof}[Proof of Theorem \ref{duality}]
We first assume that $w=u^m_0$ where $u_0\in C^2(\R^n)$ is positive and satisfies that $(u_0^m)_{0,1}$ can be extended to a positive $C^2$ function near the origin. 
By Lemma \ref{estimate of extinction time},
\[
(m+1)E(s)\geq (m+1)S^{-1}_{n,\sigma}\left(\int_{\R^n}u(\cdot,s)^{m+1}\right)^{-2\sigma/n} \geq (m+1)S^{-1}_{n,\sigma}\left(\int_{\R^n}u_0^{m+1}\right)^{-2\sigma/n}=:b.
\]
By Lemma \ref{estimate of extinction time} again, we have $bT\leq \frac{n}{2\sigma}$. Therefore,
\[
\begin{split}
\int_0^T G(t,0)\ud t \leq \int_0^T e^{-bt}\ud t&=\frac{1-e^{-bT}}{b}\\
&\leq\frac{1-e^{-\frac{n}{2\sigma}}}{m+1}S_{n,\sigma}\left(\int_{\R^n}u_0^{m+1}\right)^{2\sigma/n}.
\end{split}
\]
Hence \eqref{eq:estimate for the remainder term} holds for $w=u^m_0$ where $u_0\in C^2(\R^n)$ is positive and satisfies that $(u_0^m)_{0,1}$ can be extended to a positive $C^2$ function near the origin.  

For any nonnegative $u\in C_c^{\infty}(\R^n)$, we consider $w_{\va}=u+\va (\frac{2}{1+|x|^2})^{\frac{n-2\sigma}{2}}$ with $\va>0$. Then \eqref{eq:estimate for the remainder term} holds for $w_{\va}$. 
By sending $\va\to 0$, we have \eqref{eq:estimate for the remainder term} for $u$. Finally, Theorem \ref{duality} follows from a density argument.
\end{proof}

\appendix

\section{Uniqueness theorem for negative gradient flow involving nonlocal operator}\label{appendix a}
In this appendix, we provide a uniqueness theorem for fractional Yamabe flows, which is analog to L. Simon's uniqueness Theorem in \cite{S}. The proofs are essentially the same and we will just sketch them in our setting. Denote $H^{\sigma}(\Sn)$ as the closure of $C^{\infty}(\Sn)$ under the norm
\[
\|v\|_{H^{\sigma}(\Sn)}=\int_{\Sn}vP_{\sigma}(v).
\]
Let $\al\in (0,1)$ such that $2\sigma+\al$ is not an integer. Let $J$ be the functional defined as
\[
J(v)=\frac{1}{2}\int_{\Sn}vP_{\sigma}(v)-\frac{1}{(1-m)(N+1)}\int_{\Sn} v^{N+1},\quad v\in H^{\sigma}(\Sn).
\]
Then
\[
\nabla J (v)= P_{\sigma}(v)-\frac{1}{1-m}v^N.
\]
Let $\bar v$ be such that $\nabla J (\bar v)=0$.
\begin{thm}\label{l}
There exist $\theta\in (0,1/2)$ and $r_0>0$ such that for any $v\in C^{2\sigma+\al}(\Sn)$ with $\|v-\bar v\|_{C^{2\sigma+\al}}<r_0$,
\[
\|\nabla J (v) \|_{L^2(\Sn)}\geq \abs{J(v)-J(\bar v)}^{1-\theta}.
\]
\end{thm}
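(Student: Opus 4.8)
The plan is to prove Theorem \ref{l} as a \L ojasiewicz--Simon gradient inequality at the critical point $\bar v$, adapting the scheme of L. Simon \cite{S}. The natural framework is the H\"older scale of Section \ref{section: existence and convergence}: I would regard $J$ as a functional on a small ball $B_{r_0}(\bar v)\subset C^{2\sigma+\al}(\Sn)$ and $\nabla J(v)=P_\sigma(v)-\frac{1}{1-m}v^N$ as a map $C^{2\sigma+\al}(\Sn)\to C^\al(\Sn)\hookrightarrow L^2(\Sn)$. The key structural facts to establish first are: (i) the linearized operator $L:=\nabla^2 J(\bar v)$, namely $L\phi=P_\sigma(\phi)-\frac{N}{1-m}\bar v^{N-1}\phi$, is a Fredholm operator of index zero, and (ii) $J$ is real-analytic near $\bar v$. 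For (i), since $\bar v$ is positive and smooth on the compact manifold $\Sn$ (by the classification of solutions of \eqref{steady equation of v} recalled after Theorem \ref{convergence of v}), the zeroth order coefficient is smooth; from \eqref{value on spherical harmonics} the operator $P_\sigma$ has strictly positive spectrum, so $P_\sigma:C^{2\sigma+\al}(\Sn)\to C^\al(\Sn)$ is an isomorphism by the Schauder theory (Proposition \ref{schsn12}), while $\phi\mapsto \bar v^{N-1}\phi:C^{2\sigma+\al}(\Sn)\to C^\al(\Sn)$ is compact by the compact embedding $C^{2\sigma+\al}\hookrightarrow C^\al$; hence $L$ is ``isomorphism $+$ compact,'' Fredholm of index zero, and self-adjoint for the $L^2$ pairing. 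Let $\mathcal K=\ker L$, finite-dimensional and consisting of smooth functions by elliptic regularity, and let $\Pi$ be the $L^2$-orthogonal projection onto $\mathcal K$; then $(I-\Pi)L(I-\Pi)$ is an isomorphism of $(I-\Pi)C^{2\sigma+\al}$ onto $(I-\Pi)C^\al$.

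Next I would run the Lyapunov--Schmidt reduction. For $\zeta\in\mathcal K$ small, solve $(I-\Pi)\nabla J(\bar v+\zeta+\psi(\zeta))=0$ with $\psi(\zeta)\in(I-\Pi)C^{2\sigma+\al}$, $\psi(0)=0$, $\psi'(0)=0$, via the implicit function theorem using the invertibility of $(I-\Pi)L(I-\Pi)$. The point that makes $\psi$ real-analytic in $\zeta$ is that $\nabla J$ is real-analytic on $B_{r_0}(\bar v)$: the quadratic part contributes the bounded linear operator $P_\sigma$, and the nonlinear part is the superposition operator $v\mapsto v^N$, which is real-analytic from a $C^{2\sigma+\al}$-neighborhood of $\bar v$ into $C^{2\sigma+\al}$ because $\bar v\geq c_0>0$ on $\Sn$ (so $v$ stays bounded below by a positive constant on the ball), $t\mapsto t^N$ is real-analytic on $(0,\infty)$, and composition with real-analytic functions is analytic on H\"older spaces. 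Consequently the finite-dimensional reduced functional $f(\zeta):=J(\bar v+\zeta+\psi(\zeta))$ is real-analytic near $0$ in $\mathcal K$, with $\nabla f(0)=0$.

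I would then apply the classical finite-dimensional \L ojasiewicz inequality to $f$: there are $\theta\in(0,\tfrac12)$ and $c>0$ with $|\nabla f(\zeta)|\geq c\,|f(\zeta)-f(0)|^{1-\theta}$ near $0$ (one may always shrink $\theta$ since $|f(\zeta)-f(0)|$ is small), and assemble the estimate by the standard transcription of \cite{S}. Writing $v=\bar v+\zeta+w$ with $\zeta=\Pi(v-\bar v)$, the invertibility of $(I-\Pi)L(I-\Pi)$ together with a Taylor expansion of $\nabla J$ gives $\|w-\psi(\zeta)\|_{C^{2\sigma+\al}}\leq C\|(I-\Pi)\nabla J(v)\|_{C^\al}\leq C(\|\nabla J(v)\|_{L^2}+o(1)\|w-\psi(\zeta)\|_{C^{2\sigma+\al}})$, hence $\|w-\psi(\zeta)\|_{C^{2\sigma+\al}}\leq C\|\nabla J(v)\|_{L^2}$ after shrinking $r_0$; similarly $|\nabla f(\zeta)|\leq C\|\nabla J(v)\|_{L^2}$ and $|J(v)-J(\bar v)-(f(\zeta)-f(0))|\leq C\|w-\psi(\zeta)\|_{C^{2\sigma+\al}}^2\leq C\|\nabla J(v)\|_{L^2}^2$. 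Combining these with the finite-dimensional inequality and once more shrinking $r_0$ yields $\|\nabla J(v)\|_{L^2(\Sn)}\geq|J(v)-J(\bar v)|^{1-\theta}$.

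The step I expect to be the main obstacle is the real-analyticity of $J$ near $\bar v$ in the chosen function space --- equivalently, the real-analyticity of the superposition operator $u\mapsto u^{N+1}$ --- since $N+1=\frac{2n}{n-2\sigma}$ is in general non-integer; this is precisely where the positivity of $\bar v$, and hence the choice to measure closeness in the $C^{2\sigma+\al}$ norm (rather than in $H^\sigma$, where elements need not be bounded below), is essential. The remaining ingredients --- the Fredholm and self-adjointness properties of $L$, the mapping property $\nabla J:C^{2\sigma+\al}\to C^\al\hookrightarrow L^2$, and the elliptic estimates on $(I-\Pi)C^{2\sigma+\al}$ --- are all supplied by the Schauder theory for $P_\sigma$ of Section \ref{section: existence and convergence}, and the rest follows \cite{S} essentially line by line.
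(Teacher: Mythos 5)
Your proposal reconstructs exactly what the paper intends: the authors' proof of Theorem~\ref{l} is a one-line deferral to Theorem~3 of Simon~\cite{S}, asserting that the elliptic Schauder and $L^2$ estimates for $P_\sigma$ let the Lyapunov--Schmidt/\L ojasiewicz argument go through verbatim, which is precisely the scheme you spell out (Fredholm linearization, analyticity of the superposition $v\mapsto v^N$ near the strictly positive $\bar v$, finite-dimensional \L ojasiewicz, reassembly). One technical caution on your final chain: the intermediate step $\|(I-\Pi)\nabla J(v)\|_{C^\al}\leq C(\|\nabla J(v)\|_{L^2}+o(1)\|w-\psi(\zeta)\|_{C^{2\sigma+\al}})$ cannot hold as written since the $C^\al$ norm dominates the $L^2$ norm; in Simon's argument the Schauder branch (bounding $\|w-\psi(\zeta)\|_{C^{2\sigma+\al}}$ by $\|(I-\Pi)\nabla J(v)\|_{C^\al}$, used only to keep the nonlinear remainders small) and the $L^2$ branch (bounding $\|w-\psi(\zeta)\|_{L^2}$ by $\|\nabla J(v)\|_{L^2}$, which feeds the quadratic estimate $|J(v)-f(\zeta)|\lesssim\|w-\psi(\zeta)\|_{L^2}^2$) run in parallel rather than in series, so the conclusion stands once the two roles are separated.
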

\begin{proof}
Since we have Schauder estimates (see, e.g., \cite{JLX1}) and $L^2$ estimates (which is free from equivalence of definitions of fractional Sobolev spaces on $\Sn$ ) for $P_{\sigma}$, the proof is identical to that of Theorem 3 in \cite{S}.
\end{proof}

Let $v(x,s)$ and $\bar v$ be as in Section \ref{Extinction profile of a fractional porous medium equation}. Then direction computations and uniform bounds of $v(x,s)$ yield the following lemma
\begin{lem}\label{d/dJ}
There exist two constants $c_0$ and $T_0$ such that for any $t>T_0$ we have
\[
-\frac{\ud}{\ud s} J(v(\cdot,s))\geq c_0 \|v_s\|_{L^2(\Sn)}\|\nabla J(v(\cdot,s))\|_{L^2(\Sn)}.
\]
\end{lem}

\begin{thm}\label{convergence of flow}
\[
\lim_{s\to\infty}\|v(\cdot,s)-\bar v\|_{C^l(\Sn)}=0
\]
for any positive integer $l$.
\end{thm}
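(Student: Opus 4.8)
The plan is to run the Łojasiewicz--Simon argument for the gradient flow \eqref{fpme for v}, with Theorem \ref{l} playing the role of the Łojasiewicz inequality and Lemma \ref{d/dJ} that of the angle condition. Set $\phi(s)=J(v(\cdot,s))-J(\bar v)$. By Lemma \ref{decreasing energy} $\phi$ is non-increasing, and as established in the proof of Theorem \ref{convergence of v} there is a sequence $s_j\to\infty$ with $v(\cdot,s_j)\to\bar v$ in $C^2(\Sn)$; hence $J(v(\cdot,s_j))\to J(\bar v)$, so $\phi(s)\downarrow 0$, in particular $\phi\ge 0$. Moreover, by Propositions \ref{universal estimate} and \ref{infinity norm controls all}, $v(\cdot,s)$ is bounded in $C^m(\Sn)$ uniformly in $s\ge s^*/2$ for every $m$; combined with an interpolation inequality this yields, along these trajectories, an estimate $\|f\|_{C^{2\sigma+\al}}\le C\|f\|_{L^2}^{\mu}$ for some $\mu\in(0,1)$, the uniform $C^m$-bound being absorbed into $C$ (note $2\sigma+\al<3$, so a uniform $C^3$-bound suffices). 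This is what will allow us to convert $L^2$-closeness into $C^{2\sigma+\al}$-closeness.

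The core estimate: suppose $v(\cdot,s)$ stays, for $s$ in a subinterval of $(T_0,\infty)$, in the ball $U=\{w:\|w-\bar v\|_{C^{2\sigma+\al}}<r_0\}$ of Theorem \ref{l}. Then, wherever $\phi>0$, combining $\phi'(s)=\frac{\ud}{\ud s}J(v(\cdot,s))\le -c_0\|v_s\|_{L^2}\|\nabla J(v(\cdot,s))\|_{L^2}$ (Lemma \ref{d/dJ}) with $\|\nabla J(v(\cdot,s))\|_{L^2}\ge\phi(s)^{1-\theta}$ (Theorem \ref{l}) gives
\[
-\frac{\ud}{\ud s}\,\phi(s)^{\theta}=-\theta\,\phi(s)^{\theta-1}\phi'(s)\ge \theta c_0\,\phi(s)^{\theta-1}\phi(s)^{1-\theta}\,\|v_s(\cdot,s)\|_{L^2}=\theta c_0\,\|v_s(\cdot,s)\|_{L^2},
\]
so that $\int_{s_0}^{s}\|v_\tau(\cdot,\tau)\|_{L^2}\,\ud\tau\le(\theta c_0)^{-1}\phi(s_0)^{\theta}$ on that interval. (If $\phi(s_0)=0$ at some finite $s_0$, then Lemma \ref{decreasing energy} forces $v_s\equiv 0$ for $s\ge s_0$, so $v(\cdot,s)$ is constant there and therefore equals its $C^2$-limit point $\bar v$, and the theorem is immediate; so henceforth $\phi>0$.)

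To make this global, we trap the trajectory. Since $v(\cdot,s_j)\to\bar v$ in $C^2$ and the uniform $C^m$-bounds force $v(\cdot,s_j)\to\bar v$ in $C^{2\sigma+\al}$ as well, we may fix $j$ with $s_j>T_0$, $\|v(\cdot,s_j)-\bar v\|_{C^{2\sigma+\al}}<r_0/2$, and $C(\theta c_0)^{-\mu}\phi(s_j)^{\theta\mu}<r_0/2$. Let $s_1>s_j$ be the first time, if any, with $\|v(\cdot,s_1)-\bar v\|_{C^{2\sigma+\al}}=r_0$ (this is well defined by continuity of $s\mapsto v(\cdot,s)$ and smoothness of $v$ for $s>s^*/2$). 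On $[s_j,s_1]$ the estimate of the previous paragraph applies, so for $s\le s_1$,
\[
\|v(\cdot,s)-v(\cdot,s_j)\|_{L^2}\le\int_{s_j}^{s}\|v_\tau(\cdot,\tau)\|_{L^2}\,\ud\tau\le(\theta c_0)^{-1}\phi(s_j)^{\theta},
\]
and hence, by interpolation, $\|v(\cdot,s)-v(\cdot,s_j)\|_{C^{2\sigma+\al}}\le C(\theta c_0)^{-\mu}\phi(s_j)^{\theta\mu}<r_0/2$, giving $\|v(\cdot,s_1)-\bar v\|_{C^{2\sigma+\al}}<r_0$, a contradiction. Thus $v(\cdot,s)\in U$ for all $s\ge s_j$, so $\int_{s_j}^{\infty}\|v_s\|_{L^2}<\infty$; consequently $s\mapsto v(\cdot,s)$ is Cauchy in $L^2(\Sn)$ as $s\to\infty$ and converges in $L^2$, necessarily to $\bar v$ because of the $C^2$-subsequential convergence. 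Finally, the precompactness of $\{v(\cdot,s):s\ge s^*\}$ in $C^l(\Sn)$ for every $l$ (Propositions \ref{universal estimate}, \ref{infinity norm controls all}) together with uniqueness of the limit upgrades the convergence to $\|v(\cdot,s)-\bar v\|_{C^l(\Sn)}\to 0$ for every positive integer $l$.

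The main obstacle, exactly as in Simon's proof, is the trapping step: ensuring that once $v(\cdot,s)$ is $C^{2\sigma+\al}$-close to $\bar v$ with small energy excess, it never leaves the neighborhood where Theorem \ref{l} is valid. The rest — the differential inequality, the interpolation, and the final $C^l$ bootstrap — is routine given the uniform higher-order bounds of Proposition \ref{infinity norm controls all}.
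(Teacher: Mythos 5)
Your proposal is correct and follows the same Łojasiewicz--Simon strategy the paper invokes; the paper's proof simply defers to Proposition 21 of Andrews \cite{A} and Theorem 1 of Guan--Wang \cite{GW} for the argument that $v(\cdot,s)\to\bar v$ in $C^{2\sigma+\alpha}$ and then cites the uniform higher-order bounds for the $C^l$ upgrade, whereas you have written out in full the differential inequality $-\frac{\ud}{\ud s}\phi^\theta\ge\theta c_0\|v_s\|_{L^2}$ from Theorem \ref{l} and Lemma \ref{d/dJ}, the interpolation to control $C^{2\sigma+\alpha}$ by $L^2$, and the trapping/continuity argument — exactly the content of those references applied to this flow.
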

\begin{proof}
First we can prove that $v(\cdot,t)$ converges to $\bar v$ in $C^{2\sigma+\al}(\Sn)$, using the same methods as the proof of Proposition 21 in \cite{A} or the proof of Theorem 1 in \cite{GW}. Then Theorem \ref{convergence of flow} follows from the uniform $C^{l+1}$ bound of $v(x,s)$.
\end{proof}

Similarly if
\[
S(z)=\frac{\int_{\Sn}zP_{\sigma}(z)}{\left(\int_{\Sn} z^{N+1}\right)^{\frac{2}{N+1}}},\quad z\in H^{\sigma}(\Sn),
\]
then
\[
\nabla S (z)=2\left(\int_{\Sn} z^{N+1}\right)^{-\frac{2}{N+1}}\left(P_{\sigma}(z)-\frac{\int_{\Sn}zP_{\sigma}(z)}{\int_{\Sn} z^{N+1}}z^N\right).
\]

Let $v(x,t)$ and $v_{\infty}$ be as in Theorem \ref{convergence of fractional yamabe flow on spheres}. Note that $\nabla S (v_{\infty})=0$.
\begin{thm}\label{l_2}
There exist $\theta\in (0,1/2)$ and $r_0>0$ such that for any $\|v-v_{\infty}\|_{C^{2\sigma+\al}}<r_0$,
\[
\|\nabla S (v) \|_{L^2(\Sn)}\geq \abs{S(v)-S(v_{\infty})}^{1-\theta}.
\]
\end{thm}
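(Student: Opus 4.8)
The plan is to deduce the estimate from the abstract \L{}ojasiewicz--Simon inequality in exactly the same way that Theorem \ref{l} was deduced, i.e. by following Theorem 3 of \cite{S}; only the structural hypotheses on the functional need to be re-checked for $S$ in place of $J$. These hypotheses are: (i) $S$ is real-analytic on a neighbourhood of $v_{\infty}$ in $H^{\sigma}(\Sn)$; (ii) $\nabla S$ extends to an analytic map from a $C^{2\sigma+\al}(\Sn)$-neighbourhood of $v_{\infty}$ into $C^{\al}(\Sn)$, hence also into $L^2(\Sn)$; and (iii) the linearization $L:=D(\nabla S)(v_{\infty})$, viewed in the $L^2$-realization, is a self-adjoint Fredholm operator of index zero. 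Granting (i)--(iii), the argument of \cite{S} produces $\theta\in(0,1/2]$ and $r_0>0$ with $\|\nabla S(v)\|_{L^2(\Sn)}\ge |S(v)-S(v_{\infty})|^{1-\theta}$ whenever $\|v-v_{\infty}\|_{C^{2\sigma+\al}}<r_0$; replacing $\theta$ by a smaller value gives $\theta\in(0,1/2)$.

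For (i) and (ii): since $v_{\infty}$ is smooth and strictly positive, after shrinking $r_0$ every $v$ with $\|v-v_{\infty}\|_{C^{2\sigma+\al}}<r_0$ is bounded below by a positive constant, so the denominator $\int_{\Sn}v^{N+1}$ is positive and, since the pointwise map $t\mapsto t^{N+1}$ is analytic for $t>0$, the map $v\mapsto\int_{\Sn}v^{N+1}$ is real-analytic there; together with the bounded quadratic form $v\mapsto\int_{\Sn}vP_{\sigma}(v)$ and the analyticity of $b\mapsto b^{-2/(N+1)}$ on $\{b>0\}$ this gives analyticity of $S$. The formula for $\nabla S$ displayed before the theorem expresses it through $P_{\sigma}v$, through $v^{N}$, and through the scalars $(\int_{\Sn}v^{N+1})^{-2/(N+1)}$ and $\int_{\Sn}vP_{\sigma}(v)/\int_{\Sn}v^{N+1}$; using the Schauder estimates for $P_{\sigma}$ on $\Sn$ (see \cite{JLX1}), $P_{\sigma}$ is bounded from $C^{2\sigma+\al}$ into $C^{\al}$, and all the remaining factors depend analytically on $v$ near $v_{\infty}$, so $\nabla S$ is analytic.

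For (iii): differentiating the formula for $\nabla S$ at $v_{\infty}$ and using $P_{\sigma}(v_{\infty})=c_{\infty}v_{\infty}^{N}$ with $c_{\infty}=\int_{\Sn}v_{\infty}P_{\sigma}(v_{\infty})/\int_{\Sn}v_{\infty}^{N+1}$, one obtains $L\phi=2(\int_{\Sn}v_{\infty}^{N+1})^{-2/(N+1)}(P_{\sigma}\phi-Nc_{\infty}v_{\infty}^{N-1}\phi)$ plus finite-rank terms coming from differentiating the two scalar factors. Thus $L$ is a positive constant times $P_{\sigma}$ plus a bounded multiplication operator and a finite-rank operator; since $P_{\sigma}$ is self-adjoint on $L^2(\Sn)$ with the discrete spectrum \eqref{value on spherical harmonics} and hence is Fredholm of index zero between the corresponding Sobolev spaces (this uses only the $L^2$-estimates for $P_{\sigma}$, which are free of the choice among equivalent fractional Sobolev norms on $\Sn$), the operator $L$ is self-adjoint and Fredholm of index zero as well.

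I expect the genuine issue here, in contrast with Theorem \ref{l}, to be conceptual rather than technical: $S$ is invariant under $v\mapsto tv$, so $v_{\infty}$ is never an isolated critical point — the whole ray $\{tv_{\infty}\}$, and in fact a finite-dimensional manifold generated by M\"obius transformations, consists of critical points. This degeneracy is precisely why the \L{}ojasiewicz--Simon mechanism is needed rather than a Morse-type argument, but it poses no obstacle to that mechanism, which requires only the Fredholm property in (iii) and not non-degeneracy of the critical point. The one point demanding care is that the Schauder and $L^2$ estimates for $P_{\sigma}$ on $\Sn$ invoked above are genuinely available, which holds exactly as it did in the proof of Theorem \ref{l}.
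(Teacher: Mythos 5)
Your proposal takes the same route the paper intends: the paper gives no explicit proof for Theorem~\ref{l_2}, and, as with Theorem~\ref{l}, the intended argument is to invoke Theorem 3 of Simon \cite{S} after checking that the required structural hypotheses (analyticity of $S$ and $\nabla S$, Fredholmness of the linearization) hold for $P_{\sigma}$ on $\Sn$ via the Schauder and $L^2$ estimates. Your verification of (i)--(iii), including the observation that the scale- and M\"obius-invariance make $v_{\infty}$ a non-isolated critical point without obstructing the \L{}ojasiewicz--Simon mechanism, is an accurate rendering of what the paper leaves implicit.
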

\begin{lem}\label{d/dS}
There exist two constants $c_0$ and $T_0$ such that for any $t>T_0$ we have
\[
-\frac{\ud}{\ud t} S(v(\cdot,t))\geq c_0 \|v_t\|_{L^2(\Sn)}\|\nabla S(v(\cdot,t))\|_{L^2(\Sn)}.
\]
\end{lem}

\begin{thm}\label{convergence of flow 2}
\[
\lim_{t\to\infty}\|v(\cdot,t)-v_{\infty}\|_{C^l(\Sn)}=0
\]
for any positive integer $l$.
\end{thm}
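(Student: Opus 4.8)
The plan is to run the Lojasiewicz--Simon scheme of \cite{S}, now with the total $\sigma$-curvature functional $S$ playing the role of the energy; Theorem \ref{l_2} and Lemma \ref{d/dS} supply exactly the two inputs this scheme needs, and the only genuinely nonlocal ingredient has already been absorbed into Theorem \ref{l_2}. From the proof of Theorem \ref{convergence of fractional yamabe flow on spheres} we may already use the following: $v(\cdot,t)$ stays in a fixed compact subset of $C^\infty(\Sn)$ (it is uniformly bounded away from $0$ and $\infty$, with uniform $C^{l}$-bounds for all $l$ by Proposition \ref{infinity norm controls all}); $S(v(\cdot,t))$ is nonincreasing with $\lim_{t\to\infty}S(v(\cdot,t))=S(v_\infty)$; and $v(\cdot,t_j)\to v_\infty$ in $C^\infty(\Sn)$ along some sequence $t_j\to\infty$, where $\nabla S(v_\infty)=0$. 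It therefore suffices to prove $\|v(\cdot,t)-v_\infty\|_{C^{2\sigma+\al}(\Sn)}\to0$; convergence in every $C^l$ will then follow by interpolating this against the uniform $C^{l+1}(\Sn)$-bound (Lemma \ref{interpolation inequality on Sn}).

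First I would fix $\theta\in(0,1/2)$ and $r_0>0$ as in Theorem \ref{l_2} and pick $t_j>T_0$ (the constant in Lemma \ref{d/dS}) with $\|v(\cdot,t_j)-v_\infty\|_{C^{2\sigma+\al}(\Sn)}$ as small as desired. On any time interval during which $v(\cdot,s)$ stays in the $r_0$-ball around $v_\infty$ in $C^{2\sigma+\al}(\Sn)$, combining Lemma \ref{d/dS} with Theorem \ref{l_2} gives, for $\varphi(s):=\bigl(S(v(\cdot,s))-S(v_\infty)\bigr)^{\theta}$,
\[
-\frac{\ud}{\ud s}\varphi(s)=\theta\bigl(S(v(\cdot,s))-S(v_\infty)\bigr)^{\theta-1}\Bigl(-\frac{\ud}{\ud s}S(v(\cdot,s))\Bigr)\geq \theta c_0\,\|v_s(\cdot,s)\|_{L^2(\Sn)},
\]
so that the $L^2$-length $\int\|v_s(\cdot,s)\|_{L^2(\Sn)}\,\ud s$ of the trajectory over that interval is at most $\theta^{-1}c_0^{-1}\varphi$ evaluated at the left endpoint. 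A standard continuity (``trapping'') argument, exactly as in \cite{S}, then shows that for $j$ large the flow issuing from $v(\cdot,t_j)$ never leaves the $r_0$-ball; the point one has to check is that $\|v(\cdot,s_2)-v(\cdot,s_1)\|_{C^{2\sigma+\al}(\Sn)}$ is controlled by $\int_{s_1}^{s_2}\|v_s(\cdot,s)\|_{L^2(\Sn)}\,\ud s$, which follows by interpolating the bound on $\|v(\cdot,s_2)-v(\cdot,s_1)\|_{L^2(\Sn)}$ against the uniform higher-order bounds. Consequently $\int_{t_j}^{\infty}\|v_s(\cdot,s)\|_{L^2(\Sn)}\,\ud s<\infty$, so $v(\cdot,s)$ is Cauchy in $L^2(\Sn)$ as $s\to\infty$; since $v(\cdot,t_j)\to v_\infty$, the full $L^2$-limit is $v_\infty$, and then by the trapping and the interpolation the convergence holds in $C^{2\sigma+\al}(\Sn)$.

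Finally, interpolating this $C^{2\sigma+\al}$-convergence against the uniform $C^{l+1}(\Sn)$-bounds yields $\|v(\cdot,s)-v_\infty\|_{C^l(\Sn)}\to0$ for every positive integer $l$, which is the assertion. The step I expect to be the main obstacle is the trapping argument --- showing that the $C^{2\sigma+\al}$-length of the trajectory is controlled by its $L^2$-length, so that a small initial distance cannot grow past $r_0$ before the energy has equilibrated; once this is in place everything else is a transcription of \cite{S} (cf.\ also \cite{A}, \cite{GW}) into the present setting, which is legitimate precisely because the nonlocality of $P_\sigma$ has already been accommodated in the Lojasiewicz inequality of Theorem \ref{l_2} and in the parabolic Schauder theory of Proposition \ref{schsn12}.
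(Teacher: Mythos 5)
Your proposal is correct and follows the same Lojasiewicz--Simon route the paper intends: the paper's (unstated) proof of Theorem \ref{convergence of flow 2} is, like that of Theorem \ref{convergence of flow}, precisely this scheme built on Theorem \ref{l_2}, Lemma \ref{d/dS}, and the uniform higher-order bounds, as carried out in Proposition 21 of \cite{A} and Theorem 1 of \cite{GW}. You have simply spelled out the trapping and interpolation steps that the paper delegates to those references.
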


\section{Some interpolation inequalities}\label{appendix b}

\begin{lem}\label{interpolation inequality}
Suppose that $0<\al<\min(1,2\sigma)$ and $2\sigma+\al$ is not an integer. There exists a constant $C>0$ depending only on $n$ and $\sigma$ such that for any $\va>0$ and $u\in \mathcal C^{2\sigma+\al,1+\frac{\al}{2\sigma}}(Q_T)$, we have
\begin{align}
\label{interpolation:1}
|u_t|_{0;Q_T}&\leq \va [u_t]_{\al,\frac{\al}{2\sigma};Q_T}+C\va^{-2\sigma/\al} |u|_{0;Q_T},\\ 
\label{interpolation:2}
|(-\Delta)^{\sigma}u|_{0;Q_T}&\leq \va [u]_{2\sigma+\al,1+\frac{\al}{2\sigma};Q_T}+C\va^{-2\sigma/\al} |u|_{0;Q_T},\\ 
\label{interpolation:3}
[u]_{\al,\frac{\al}{2\sigma};Q_T}&\leq \va [u]_{2\sigma+\al,1+\frac{\al}{2\sigma};Q_T}+C\va^{-\al/(2\sigma)} |u|_{0;Q_T}.
\end{align}
If $\sigma>\frac{1}{2}$, then
\be\label{interpolation:4}
[\nabla_x u]_{\al,\frac{\al}{2\sigma};Q_T}\leq \va [u]_{2\sigma+\al,1+\frac{\al}{2\sigma};Q_T}+C\va^{-(1+\al)/(2\sigma-1)} |u|_{0;Q_T}.
\ee
\end{lem}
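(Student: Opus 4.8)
The plan is to derive each inequality from a scale-free two-parameter estimate and then optimise the free parameter; each two-parameter estimate splits into a time part, treated by integrating $u_t$, and a space part, which is the standard interpolation inequality for $(-\Delta)^\sigma$ on $\R^n$ applied at each fixed time. It suffices to prove the estimates for $\va\le\va_0(n,\sigma,T)$, since only such $\va$ occur in the Schauder argument, and for larger $\va$ the optimal parameter below would exceed $T^{1/(2\sigma)}$, forcing a harmless extra dependence of the constant on $T$.

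For \eqref{interpolation:1}: fix $(x_0,t_0)\in Q_T$ and $0<h\le T/2$, choose a time interval $I$ of length $h$ with $t_0\in I\subset(0,T]$, and write $h\,u_t(x_0,t_0)=\int_I\big(u_t(x_0,t_0)-u_t(x_0,s)\big)\,\ud s+R$ with $|R|\le 2|u|_{0;Q_T}$. Estimating the integrand by $[u_t]_{\al,\frac{\al}{2\sigma};Q_T}\,|t_0-s|^{\al/2\sigma}$ gives $|u_t(x_0,t_0)|\le C[u_t]_{\al,\frac{\al}{2\sigma};Q_T}\,h^{\al/2\sigma}+2h^{-1}|u|_{0;Q_T}$, and the choice $Ch^{\al/2\sigma}=\va$ yields \eqref{interpolation:1} with exponent $-2\sigma/\al$. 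Taking instead $h=r^{2\sigma}$ and integrating $u_t$ over intervals of length $\min(|t_1-t_2|,r^{2\sigma})$, one obtains, for all sufficiently small $r>0$, the two-scale bound
\[
[u]^{\mathrm{t}}_{\frac{\al}{2\sigma};Q_T}:=\sup_{x,\,t_1\ne t_2}\frac{|u(x,t_1)-u(x,t_2)|}{|t_1-t_2|^{\al/2\sigma}}\le C\big(r^{2\sigma}[u_t]_{\al,\frac{\al}{2\sigma};Q_T}+r^{-\al}|u|_{0;Q_T}\big),
\]
together with the analogous bound for $[\nabla_x u]^{\mathrm{t}}_{\frac{\al}{2\sigma};Q_T}$ (when $\sigma>1/2$), in which $r^{2\sigma},r^{-\al}$ are replaced by $r^{2\sigma-1},r^{-(1+\al)}$ and the gradient interpolation quoted below is used.

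For \eqref{interpolation:2} I would argue directly. If $M:=|(-\Delta)^\sigma u|_{0;Q_T}$ is essentially attained at $(x_0,t_0)$, then the $C^\al$ bound on $(-\Delta)^\sigma u(\cdot,t_0)$ forces $(-\Delta)^\sigma u(\cdot,t_0)\ge M/2$ on a Euclidean ball $B_{r_0}(x_0)$ with $r_0\sim\big(M/[(-\Delta)^\sigma u]_{\al,\frac{\al}{2\sigma};Q_T}\big)^{1/\al}$. Testing against a bump $\psi$ adapted to $B_{r_0}(x_0)$ and using the self-adjointness $\int(-\Delta)^\sigma u(\cdot,t_0)\,\psi=\int u(\cdot,t_0)\,(-\Delta)^\sigma\psi$ with $|(-\Delta)^\sigma\psi|\le Cr_0^{-2\sigma}$ and the far-field decay of $(-\Delta)^\sigma\psi$ gives $Mr_0^{\,n}\lesssim|u|_{0;Q_T}\,r_0^{\,n-2\sigma}$, hence $M\lesssim|u|_{0;Q_T}^{\al/(2\sigma+\al)}\,[(-\Delta)^\sigma u]_{\al,\frac{\al}{2\sigma};Q_T}^{2\sigma/(2\sigma+\al)}$; Young's inequality then produces \eqref{interpolation:2} with precisely the exponent $-2\sigma/\al$. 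The same reasoning lets one replace the full $C^\al$ norm of $(-\Delta)^\sigma u$ by its seminorm on the right-hand sides of the remaining estimates.

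For \eqref{interpolation:3} and \eqref{interpolation:4}: since $[u]_{\al,\frac{\al}{2\sigma};Q_T}\le[u]^{\mathrm{s}}_{\al;Q_T}+[u]^{\mathrm{t}}_{\frac{\al}{2\sigma};Q_T}$ with $[u]^{\mathrm{s}}_{\al;Q_T}:=\sup_t[u(\cdot,t)]_{C^\al(\R^n)}$ (and likewise for $\nabla_x u$, whose time modulus is controlled through $u_t$ and $(-\Delta)^\sigma u$), it remains to bound the spatial seminorm. For each $t$ the interpolation inequality for $(-\Delta)^\sigma$ on $\R^n$ gives $[u(\cdot,t)]_{C^\al(\R^n)}\le C\big(r^{2\sigma}[(-\Delta)^\sigma u(\cdot,t)]_{C^\al(\R^n)}+r^{-\al}|u(\cdot,t)|_0\big)$, and, when $\sigma>1/2$, the corresponding gradient estimate with left side $[\nabla_x u(\cdot,t)]_{C^\al(\R^n)}$ and with $2\sigma,\al$ replaced by $2\sigma-1,\,1+\al$. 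Adding the time bounds above and optimising — $r^{2\sigma}\sim\va$ for \eqref{interpolation:3}, $r^{2\sigma-1}\sim\va$ for \eqref{interpolation:4} — yields exactly the exponents $\va^{-\al/(2\sigma)}$ and $\va^{-(1+\al)/(2\sigma-1)}$. The step I expect to be the real obstacle is the purely spatial (elliptic) interpolation for $(-\Delta)^\sigma$, together with its gradient version when $\sigma>1/2$: one must either invoke the fractional Schauder theory (e.g.\ \cite{silvestre}) or reprove it from the representation \eqref{fractional laplacian expression in integral} by a dyadic decomposition of the singular integral about the base point. Everything else is careful bookkeeping of the powers of $r$ so that the recombined exponents match the statement.
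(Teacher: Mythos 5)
Your proposal follows the same overall strategy as the paper: reduce by the parabolic dilation $u\mapsto u(Rx,R^{2\sigma}t)$, split the Hölder seminorms into a spatial part (handled by elliptic interpolation for $(-\Delta)^{\sigma}$ at each fixed $t$) and a temporal part (handled by integrating $u_t$), and then recombine. There are two places where the route genuinely diverges, and one place you should be more careful.

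First, for \eqref{interpolation:2} the paper simply cites the elliptic estimate $|w|_{2\sigma+\al;\R^n}\leq C(|w|_{0;\R^n}+|(-\Delta)^\sigma w|_{\al;\R^n})$ from the Schauder theory (Stein/Silvestre) and applies it at each time slice; your bump--function duality argument ($\int(-\Delta)^\sigma u\,\psi=\int u\,(-\Delta)^\sigma\psi$, with $\psi$ a bump on the ball of radius $r_0\sim(M/[(-\Delta)^\sigma u]_{\al})^{1/\al}$ where $(-\Delta)^\sigma u\geq M/2$) is a self-contained and correct way to derive the same interpolation, trading a citation for an elementary argument. That is a legitimate alternative.

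Second, for the time modulus of $\nabla_x u$ in \eqref{interpolation:4}, the paper gives a dedicated argument: it sets $w=(-\Delta)^\sigma u$, localizes with a cut-off $\eta$ near $x_0$, writes $u_0=(-\Delta)^{-\sigma}(\eta w)$, observes that $u-u_0$ is $\sigma$-harmonic in $B_1(x_0)$ (hence $\nabla_x(u-u_0)(x_0,\cdot)$ is controlled by $\|(u-u_0)(\cdot,t)-(u-u_0)(\cdot,s)\|_{L^\infty}$), and estimates $\nabla_x u_0$ directly from the Riesz kernel, using $2\sigma>1$ for convergence. Your proposal instead invokes a pointwise gradient interpolation $|\nabla g|_{0}\lesssim r^{-1}|g|_0+r^{2\sigma-1}|(-\Delta)^\sigma g|_0$ applied to $g=u(\cdot,t_1)-u(\cdot,t_2)$. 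This works --- $|g|_0\leq|u_t|_0|t_1-t_2|$ and $|(-\Delta)^\sigma g|_0\leq[(-\Delta)^\sigma u]_{\al,\frac{\al}{2\sigma}}|t_1-t_2|^{\al/2\sigma}$, and since $1-\al/2\sigma>0$ the optimization closes --- but that pointwise interpolation is not the Hölder version you quote from the elliptic theory; it is the statement that $i\xi|\xi|^{-2\sigma}(1-\chi(\xi))$ is an $L^\infty$-multiplier when $\sigma>1/2$, which is precisely what the paper's localization $u=u_0+(u-u_0)$ is proving. In your write-up you describe the two types of gradient interpolation as ``the corresponding gradient estimate'' and ``likewise,'' but the pointwise one ($|\nabla g|_0$ from $|(-\Delta)^\sigma g|_0$) is not a direct corollary of the Hölder one ($[\nabla w]_\al$ from $[(-\Delta)^\sigma w]_\al$); it needs its own localization or multiplier argument. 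Flagging that distinction is the main thing missing; once it is supplied (either by a potential-theoretic cut-off as in the paper, or by the dyadic/multiplier argument you allude to), the rest is, as you say, power bookkeeping, and the exponents you report match the statement.

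Finally, a small caveat on the scaling at the start: the lemma as stated asserts the inequalities for all $\va>0$ with a constant independent of $T$. The dilation $u\mapsto u(Rx,R^{2\sigma}t)$ turns $\va=1$ on $Q_2$ into general $\va=R^\al$ on $Q_{2R^{2\sigma}}$, so to cover all $\va$ one needs the $\va=1$ estimate uniformly on $Q_T$ for all $T\geq$ some threshold (which is what the finite-difference / integration-in-$t$ argument gives, since it only uses sub-intervals of unit length). Your remark that large $\va$ ``forces a harmless extra dependence on $T$'' and that only small $\va$ is ever used in the Schauder iteration is the right practical observation; just be aware the paper's one-line dilation reduction is hiding the same point.
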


\begin{proof}
By the fractional parabolic dilations of the form $u(x,t)\to u(Rx,R^{2\sigma}t)$, we only need to show the case $\va=1$ and $T=2$.
Take $X=(x,t)\in Q_T$ and we have, for some $\theta\in (-1,1)$,
\[
\begin{split}
|u_t(X)|&\leq |u_t(X)-\big(u(x,t\pm 1)-u(x,t)\big)|+2|u|_{0;Q_T}\\
&=|u_t(X)-u_t(x,t+\theta)|+2|u|_{0;Q_T}\leq  [u_t]_{\al,\frac{\al}{2\sigma};Q_T}+2|u|_{0;Q_T}.
\end{split}
\] 
This finishes the proof of \eqref{interpolation:1}. For \eqref{interpolation:2} and \eqref{interpolation:3}, we first recall (see, e.g., \cite{Stein}) that
\[
|w|_{2\sigma+\al;\R^n}\leq C(|w|_{0;\R^n}+|(-\Delta)^{\sigma}w|_{\al;\R^n})\quad\text{for all }w\in C^{2\sigma+\al}(\R^n).
\]
Hence,
\[
\begin{split}
|(-\Delta)^{\sigma}u(x,t)|&\leq C(|u(\cdot, t)|_{0;\R^n}+|u(\cdot,t)|_{C^{2\sigma+\al}(\R^n)})\\
&\leq C(|u|_{0;Q_T}+[(-\Delta)^{\sigma}u]_{\al,\frac{\al}{2\sigma};Q_T})\leq C([u]_{2\sigma+\al,1+\frac{\al}{2\sigma};Q_T}+ |u|_{0;Q_T}),
\end{split}
\]
and
\[
\begin{split}
[u]_{\al,\frac{\al}{2\sigma};Q_T}&\leq\sup_{t_1\neq t_2, x}\frac{|u(x,t_1)-u(x,t_2)|}{|t_1-t_2|^{\frac{\al}{2\sigma}}}+\sup_{x_1\neq x_2, t}\frac{|u(x_1,t)-u(x_2,t)|}{|x_1-x_2|^{\al}}\\
&\leq C( |u|_{0;Q_T}+|u_t|_{0;Q_T}+\sup_t |u(\cdot,t)|_{2\sigma+\al;\R^n})\\
&\leq C([u]_{2\sigma+\al,1+\frac{\al}{2\sigma};Q_T}+|u|_{0;Q_T}).
\end{split}
\]
Finally, for $\sigma>\frac{1}{2}$ we notice that by the same methods as above,
\[
\sup_{t,x_1\neq x_2}\frac{|\nabla_x u(x_1,t)-\nabla_x u(x_2,t)|}{|x_1-x_2|^{\al}}\leq C([u]_{2\sigma+\al,1+\frac{\al}{2\sigma};Q_T}+|u|_{0;Q_T}).
\]
Thus, to prove \eqref{interpolation:4}, we only need to show
\[
\sup_{s\neq t, x}\frac{|\nabla_x u(x,s)-\nabla_x u(x,t)|}{|s-t|^{\frac{\al}{2\sigma}}}\leq C([u]_{2\sigma+\al,1+\frac{\al}{2\sigma};Q_T}+|u|_{0;Q_T}).
\]
Fix any $x_0\in \R^n$. Let $w(x,t)=(-\Delta)^{\sigma}u(x,t)$ and $\eta(x)$ be a smooth cut-off function supported in $B_2(x_0)\in\R^n$ and equal to $1$ in $B_1(x_0)$. Let
\[
u_0(x,t)=(-\Delta)^{-\sigma}(\eta w)=\int_{\R^n}\frac{\eta(y)w(y,t)}{|x-y|^{n-2\sigma}}\,\ud y.
\]
For convenience we have omitted some positive constant as in \eqref{sigma potential}. Then
\[
(-\Delta)^{\sigma}(u_0(x,t)-u(x,t)-u_0(x,s)+u(x,s))=0 \quad\mbox{in } B_1(x_0),
\]
which implies, for $0<|t-s|\leq 1$,
\[
\begin{split}
&|\nabla_x u_0(x_0,t)-\nabla_x u(x_0,t)-\nabla_x u_0(x_0,s)+\nabla_x u(x_0,s)|\\
&\leq C|u_0(x,t)-u(x,t)-u_0(x,s)+u(x,s)|_{L^{\infty}(\R^n)}\\
&\leq C(|u_t|_{0;Q_T}+[u]_{2\sigma+\al,1+\frac{\al}{2\sigma};Q_T})|t-s|^{\frac{\al}{2\sigma}}.
\end{split}
\]
Since $\sigma>1/2$ and 
\[
\nabla_x u_0(x_0,t)=(2\sigma-n)\int_{\R^n}\frac{(x_0-y)\eta(y)w(y,t)}{|x_0-y|^{n+2-2\sigma}}\,\ud y,
\]
we have
\[
|\nabla_x u_0(x_0,t)-\nabla_x u_0(x_0,s)|\leq C[u]_{2\sigma+\al,1+\frac{\al}{2\sigma};Q_T}|t-s|^{\frac{\al}{2\sigma}}.
\]
Together with \eqref{interpolation:1}, we have
\[
\sup_{s\neq t, x}\frac{|\nabla_x u(x,s)-\nabla_x u(x,t)|}{|s-t|^{\frac{\al}{2\sigma}}}\leq C([u]_{2\sigma+\al,1+\frac{\al}{2\sigma};Q_T}+|u|_{0;Q_T}).
\]
This finishes the proof of \eqref{interpolation:4}.
\end{proof}

\begin{lem}\label{interpolation inequality on Sn}
Suppose that $0<\al<\min(1,2\sigma)$ and $2\sigma+\al$ is not an integer. For any small $\va>0$, there exists a constant $C(\va)>0$ depending only on $n,\sigma$ and $\va$ such that for any $v\in \mathcal C^{2\sigma+\al,1+\frac{\al}{2\sigma}}(\mathcal Q_T)$, we have
\begin{align}
\label{interpolation:1s}
|v_t|_{0;\mathcal Q_T}&\leq \va [v_t]_{\al,\frac{\al}{2\sigma};\mathcal Q_T}+C(\va) |v|_{0;\mathcal Q_T},\\ 
\label{interpolation:2s}
|P_{\sigma}v|_{0;\mathcal Q_T}&\leq \va [v]_{2\sigma+\al,1+\frac{\al}{2\sigma};\mathcal Q_T}+C(\va)|v|_{0;\mathcal Q_T},\\ 
\label{interpolation:3s}
[v]_{\al,\frac{\al}{2\sigma};\mathcal Q_T}&\leq \va [v]_{2\sigma+\al,1+\frac{\al}{2\sigma};\mathcal Q_T}+C(\va) |v|_{0;\mathcal Q_T}.
\end{align}
\end{lem}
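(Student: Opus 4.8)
The plan is to reduce the three estimates to their Euclidean counterparts in Lemma~\ref{interpolation inequality} by passing to stereographic charts, exactly as in the proof of Proposition~\ref{schsn12}, and to treat the time variable by a direct elementary argument. For \eqref{interpolation:1s} one simply repeats the proof of \eqref{interpolation:1}: it involves only the time variable, so for fixed $\xi$ and $t$ one writes $v_t(\xi,t)$ as a difference quotient of $v$ in $t$ over an interval of length $\delta$ plus an $O(\delta^{-1}|v|_{0;\mathcal Q_T})$ error, then applies the mean value theorem together with $[v_t]_{\al,\frac{\al}{2\sigma};\mathcal Q_T}$ and chooses $\delta$ with $\delta^{\al/2\sigma}\le\va$ (here $\delta$ plays the role of the parabolic rescaling used in the Euclidean proof).

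For \eqref{interpolation:2s}, cover $\Sn$ by the stereographic chart $F$ from $\mathcal N$ and its antipodal analogue $G$. Setting $u(x,t)=\left(\frac{2}{1+|x|^2}\right)^{\frac{n-2\sigma}{2}}v(F(x),t)$, the conformal covariance \eqref{P-sigma to frac lap} gives $(-\Delta)^\sigma u=\left(\frac{2}{1+|x|^2}\right)^{\frac{n+2\sigma}{2}}(P_\sigma v)\circ F$ on $\R^n$; since the conformal factor is smooth with bounded derivatives and $F$ is globally Lipschitz, $u\in\mathcal C^{2\sigma+\al,1+\frac{\al}{2\sigma}}(\R^n\times(0,T])$ with
\[
[u]_{2\sigma+\al,1+\frac{\al}{2\sigma};\R^n\times(0,T]}\le C\big([v]_{2\sigma+\al,1+\frac{\al}{2\sigma};\mathcal Q_T}+|v_t|_{0;\mathcal Q_T}+|P_\sigma v|_{0;\mathcal Q_T}\big).
\]
On a fixed ball $B_R$ the conformal factor is bounded below, so over $F(B_R)$ the quantity $|P_\sigma v|_0$ is controlled by $|(-\Delta)^\sigma u|_{0;\R^n\times(0,T]}$; applying \eqref{interpolation:2} to $u$, doing the same with the chart $G$ (whose image, for $R$ large, covers the small cap around $\mathcal N$ omitted by $F(B_R)$), and adding, one gets
\[
|P_\sigma v|_{0;\mathcal Q_T}\le C\va_1\big([v]_{2\sigma+\al,1+\frac{\al}{2\sigma};\mathcal Q_T}+|v_t|_{0;\mathcal Q_T}+|P_\sigma v|_{0;\mathcal Q_T}\big)+C\va_1^{-2\sigma/\al}|v|_{0;\mathcal Q_T}.
\]
Bounding $|v_t|_{0;\mathcal Q_T}$ via \eqref{interpolation:1s} and absorbing $C\va_1|P_\sigma v|_{0;\mathcal Q_T}$ into the left side for $\va_1$ small yields \eqref{interpolation:2s}.

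For \eqref{interpolation:3s}, split $[v]_{\al,\frac{\al}{2\sigma};\mathcal Q_T}$ into its temporal and spatial Hölder parts. Splitting according to whether $|t_1-t_2|$ is at most $\delta$ or larger, the temporal part is $\le\delta^{1-\al/2\sigma}|v_t|_{0;\mathcal Q_T}+C\delta^{-\al/2\sigma}|v|_{0;\mathcal Q_T}$, and one concludes by \eqref{interpolation:1s} choosing $\delta$ and then the parameter in \eqref{interpolation:1s} small. For the spatial part, pairs of points at distance bounded below contribute $\le C|v|_{0;\mathcal Q_T}$, while nearby points lie in one of the two charts, on a fixed ball of which $F$ (resp.\ $G$) is bi-Lipschitz and the conformal factor is bounded above and below; hence the spatial seminorm of $v$ there is controlled by $[u]_{\al,\frac{\al}{2\sigma};\R^n\times(0,T]}$, to which \eqref{interpolation:3} applies, and one finishes using the bound on $[u]_{2\sigma+\al,1+\frac{\al}{2\sigma};\R^n\times(0,T]}$ above together with \eqref{interpolation:1s} and the already-established \eqref{interpolation:2s}.

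The main obstacle I anticipate is the bookkeeping involved in transferring parabolic Hölder seminorms between $\Sn$ and the Euclidean charts through the conformal covariance of $P_\sigma$: because the conformal factor degenerates near the poles, one is forced to work with two charts and to invoke the Euclidean interpolation inequalities only on bounded balls, and one must keep careful track of the lower-order remainders $|v_t|_{0;\mathcal Q_T}$ and $|P_\sigma v|_{0;\mathcal Q_T}$ so that they can be absorbed or controlled by the previously established inequalities.
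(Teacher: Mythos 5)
Your proof is correct and follows essentially the same route as the paper's (very terse) proof: pass to stereographic charts, invoke the conformal covariance \eqref{P-sigma to frac lap}, use the global Lipschitz bound $|F(x)-F(y)|\le C_n|x-y|$ to transfer the parabolic H\"older seminorms from $\Sn$ to $\R^n$, and then apply Lemma~\ref{interpolation inequality}. The paper merely states these ingredients in one sentence; you have correctly supplied the bookkeeping that makes the argument complete, in particular the use of the two antipodal charts to handle the degeneration of the conformal factor near the pole and the absorption of the lower-order terms $|v_t|_{0;\mathcal Q_T}$ and $|P_\sigma v|_{0;\mathcal Q_T}$ on the right side (which is harmless precisely because $v\in\mathcal C^{2\sigma+\al,1+\frac{\al}{2\sigma}}(\mathcal Q_T)$, so these quantities are finite and can be absorbed after invoking \eqref{interpolation:1s}).
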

\begin{proof}
Using stereographic projections, \eqref{P-sigma to frac lap} and noticing that $|x-y|\geq C_n|F(x)-F(y)|$, the above inequalities follows from Lemma \ref{interpolation inequality}.
\end{proof}

\begin{lem}\label{lemma of inner product}
Suppose that $0<\al<\min(1,2\sigma)$ and $2\sigma+\al$ is not an integer. Let $u\in\mathcal C^{2\sigma+\al,1+\frac{\al}{2\sigma}}(Q_1)$ and $\eta\in C^{2}_c(\R^{n+1})$, then for any $\va>0$, there exists $C(\va)>0$ depending only on $\al,\sigma, n,\va$ and $\|\eta\|_{C^2(\R^{n+1})}$ such that
\be\label{estimate of inner product} 
[\langle u,\eta\rangle]_{\al,\frac{\al}{2\sigma};Q_1}\leq  \va [u]_{2\sigma+\al,1+\frac{\al}{2\sigma};Q_1} + C(\va)|u|_{0,Q_1} .
\ee
\end{lem}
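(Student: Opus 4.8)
The plan is to estimate the commutator-type quantity $\langle u,\eta\rangle$, which by \eqref{fractional inner product} is
\[
\langle u,\eta\rangle(x,t)=c(n,\sigma)\int_{\R^n}\frac{(u(x,t)-u(y,t))(\eta(x,t)-\eta(y,t))}{|x-y|^{n+2\sigma}}\,\ud y,
\]
and to show it gains enough smoothness that its $C^{\al,\al/2\sigma}$ seminorm on $Q_1$ is controlled by a small multiple of $[u]_{2\sigma+\al,1+\al/2\sigma;Q_1}$ plus a large multiple of $|u|_{0;Q_1}$. The guiding principle is that $\langle u,\eta\rangle$ is morally an operator of order $2\sigma-1$ applied to $u$ (the first-order Lipschitz decay of $\eta(x)-\eta(y)$ near the diagonal cancels one order of the singularity), so it should be H\"older continuous of an order strictly better than what $u$ itself controls, and a standard interpolation then produces the small constant $\va$.

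First I would reduce to a purely spatial estimate: since the $t$-dependence of $\eta$ and $u$ enters only through difference quotients and $\eta\in C^2_c$, the time-H\"older part $[\langle u,\eta\rangle]_{\al/2\sigma;t}$ follows by writing $\langle u(\cdot,t_1),\eta\rangle-\langle u(\cdot,t_2),\eta\rangle=\langle u(\cdot,t_1)-u(\cdot,t_2),\eta\rangle$ and using $|u(x,t_1)-u(x,t_2)|\le |u_t|_{0;Q_1}|t_1-t_2|$ together with \eqref{interpolation:1}; so the heart of the matter is the spatial seminorm $\sup_t[\langle u(\cdot,t),\eta\rangle]_{\al;\R^n}$. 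Fix $t$ and write $f=u(\cdot,t)$, $\psi=\eta(\cdot,t)$. I would split the integral defining $\langle f,\psi\rangle(x)$ at radius $1$ around $x$: on $|x-y|\ge 1$ the kernel is bounded and the estimate is trivially $\le C|f|_{0}\|\psi\|_{C^0}$; on $|x-y|<1$ use $|\psi(x)-\psi(y)|\le \|\psi\|_{C^1}|x-y|$, so the inner piece is bounded by $C\|\psi\|_{C^1}\int_{|x-y|<1}\frac{|f(x)-f(y)|}{|x-y|^{n+2\sigma-1}}\,\ud y$. Now to estimate the H\"older seminorm of this in $x$, I would further decompose, for two points $x_1,x_2$ with $r:=|x_1-x_2|<1$, into the region $|y-x_1|<2r$ (where one uses $|f(x_i)-f(y)|\le [f]_{\gamma}|x_i-y|^{\gamma}$ for an appropriate intermediate H\"older exponent $\gamma$ of $f$, e.g. $\gamma$ close to $\min(1,2\sigma)$ if $2\sigma\le 1$, or involving $\nabla f$ when $\sigma>1/2$) and the region $|y-x_1|\ge 2r$ (where one uses the mean value theorem on the kernel $|x_1-y|^{1-n-2\sigma}-|x_2-y|^{1-n-2\sigma}$, gaining a factor $r/|x_1-y|$, against $|f(x_1)-f(y)|+|f(x_2)-f(y)|$). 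Carrying out the radial integrals, the exponents work out precisely because $2\sigma-1<2\sigma$ and $\al<\min(1,2\sigma)$, yielding $[\langle f,\psi\rangle]_{\al;\R^n}\le C\|\psi\|_{C^2}\big(|f|_{0}+[f]_{\mu;\R^n}\big)$ for some $\mu<2\sigma$; combining with the $C^2$ bound on $\psi$ (which absorbs the $\|\psi\|_{C^1}$ and, via $(-\Delta)^\sigma\psi$ when needed, the contributions from the bounded part) gives the claim with a large but finite constant.

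Finally I would apply the interpolation inequality \eqref{interpolation:3} (or \eqref{interpolation:4} when $\sigma>1/2$, to handle $[\nabla_x u]$) of Lemma \ref{interpolation inequality} to the quantity $[f]_{\mu;\R^n}\le \sup_t[u(\cdot,t)]_{\mu}$ appearing on the right: for any $\va>0$ one has $[u]_{\mu}\le \va'[u]_{2\sigma+\al,1+\al/2\sigma;Q_1}+C(\va')|u|_{0;Q_1}$, and choosing $\va'$ small enough relative to $\va/\|\eta\|_{C^2}$ yields \eqref{estimate of inner product}. The main obstacle I anticipate is the bookkeeping in the two-region decomposition of the spatial difference $\langle f,\psi\rangle(x_1)-\langle f,\psi\rangle(x_2)$: one must be careful that the intermediate regularity exponent $\mu$ of $f$ used there is genuinely $<2\sigma$ (so that interpolation applies and produces a free small constant) while still being large enough — together with the one order gained from $\psi$ being Lipschitz — to make all the near-diagonal radial integrals converge; the borderline behavior when $2\sigma$ is close to $1$, where one passes between using $[u]_{\al}$ and using $[\nabla_x u]_{\al}$, is where the estimate is most delicate.
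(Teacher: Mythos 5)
Your approach is essentially the same as the paper's: both decompose the integral into a near-diagonal region, where the Lipschitz decay $|\eta(x)-\eta(y)|\le C|x-y|$ lowers the effective kernel order to $2\sigma-1$, and a far region, bound $u$ there by an intermediate H\"older seminorm of order strictly below $2\sigma+\al$, and then invoke the interpolation inequalities of Lemma \ref{interpolation inequality} to produce the small constant $\va$; the only structural difference is that the paper carries the parabolic H\"older increment jointly, splitting the $z$-integral at $|z|=\rho(X_1,X_2)$, whereas you separate the seminorm into a pure-time and a pure-space piece before decomposing. One small error worth flagging: the identity $\langle u(\cdot,t_1),\eta\rangle-\langle u(\cdot,t_2),\eta\rangle=\langle u(\cdot,t_1)-u(\cdot,t_2),\eta\rangle$ you invoke for the time part is not correct, since $\eta$ depends on $t$ as well; the missing cross term $\langle u(\cdot,t_2),\eta(\cdot,t_1)-\eta(\cdot,t_2)\rangle$ must be included (it is handled by the same near/far split using $\|\eta\|_{C^2}$), and when $\sigma>1/2$ the near-diagonal contribution to the time increment needs an intermediate spatial H\"older seminorm of $u(\cdot,t_1)-u(\cdot,t_2)$ rather than merely $|u_t|_{0;Q_1}$, a point your sketch leaves implicit.
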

\begin{proof}
We denote $C$ as various constants depending only on $n,\sigma,\al, \|\eta\|_{C^2(\R^{n+1})}$, and $C(\va)$ as various constants depending only on $n,\sigma,\al, \|\eta\|_{C^2(\R^{n+1})}$ and $\va$.
Recall that $\langle u,\eta\rangle$ is defined in \eqref{fractional inner product}. For any $(x,t)\in Q_1$, 
\[
\begin{split}
| \langle u,\eta\rangle(x,t)|&\le c(n,\sigma)\int_{\R^n\backslash B_{1}(x)}\frac{|u(x,t)-u(y,t)||\eta(x,t)-\eta(y,t)|}{|x-y|^{n+2\sigma}}\,\ud y\\
 &\quad +c(n,\sigma)\int_{B_1(x)}\frac{|u(x,t)-u(y,t)||\eta(x,t)-\eta(y,t)|}{|x-y|^{n+2\sigma}}\,\ud y\\
 &\leq C |u|_{0,Q_1} + C [u(\cdot,t)]_{\sigma,\R^n}\leq \va [u]_{2\sigma+\al,1+\frac{\al}{2\sigma};Q_1} + C(\va)|u|_{0,Q_1} .
\end{split}
\]
Fix any $X_1=(x_1,t_1), X_2=(x_2,t_2)\in Q_1$, $X_1\neq X_2$. For convenience, we write $\rho=\rho(X_1,X_2)$ and $u^z(x,t)=u(x,t)-u(x+z,t)$. We may also suppose that $\rho\leq 1$. 

\[
\begin{split}
&|\langle u,\eta\rangle(x_1,t_1)-\langle u,\eta\rangle(x_2,t_2)| \\
& \leq\left|\int_{|z|\leq \rho}\frac{\big(u^z(x_1,t_1)-u^z(x_2,t_2)\big)\eta^z(x_1,t_1)}{|z|^{n+2\sigma}} \,\ud z\right|\\ 
&+\left| \int_{|z|\leq \rho}\frac{\big(\eta^z(x_1,t_1)-\eta^z(x_2,t_2)\big)u^z(x_2,t_2)}{|z|^{n+2\sigma}} \,\ud z\right|\\
&+\left|\int_{|z|\geq \rho}\frac{\big(u^z(x_1,t_1)-u^z(x_2,t_2)\big)\eta^z(x_1,t_1)}{|z|^{n+2\sigma}} \,\ud z\right|\\ 
&+\left| \int_{|z|\geq \rho}\frac{\big(\eta^z(x_1,t_1)-\eta^z(x_2,t_2)\big)u^z(x_2,t_2)}{|z|^{n+2\sigma}} \,\ud z\right|\\
&:=I_1+I_2+I_3+I_4
\end{split}
\]
For $I_1$ and $I_2$, we first consider that $2\sigma+\al<1$. Then by change of variable, 
\[
\begin{split}
I_1+I_2\leq C\max_{i=1,2}[u(\cdot, t_i)]_{\al+\sigma;\R^n}\int_ {|z|\leq \rho} |z| ^{\al+\sigma+1-n-2\sigma}\ud z \leq C\max_{i=1,2}[u(\cdot, t_i)]_{\al+\sigma;\R^n}\rho^{1+\al-\sigma}.
\end{split}
\]
If $1<\al+2\sigma<2$, we have
\[
\begin{split}
I_1+I_2\leq C\max_{i=1,2}[u(\cdot, t_i)]_{\al+2\sigma-1;\R^n}\int_ {|z|\leq \rho} |z| ^{\al+2\sigma-n-2\sigma}\ud z\leq C\max_{i=1,2}[u(\cdot, t_i)]_{\al+2\sigma-1;\R^n}\rho^{\al}.
\end{split}
\]
If $2\sigma+\al>2$, then
\[
\begin{split}
I_1&\leq\left|\int_{|z|\leq \rho}\frac{\big(u^z(x_1,t_1)+\nabla_x u(x_1,t_1)z-u^z(x_2,t_2)-\nabla_x u(x_2,t_2)z\big)\eta^z(x_1,t_1)}{|z|^{n+2\sigma}} \,\ud z\right|\\ 
&+\left|\int_{|z|\leq \rho}\frac{\big(\nabla_x u(x_1,t_1)z-\nabla_x u(x_2,t_2)z\big)\eta^z(x_1,t_1)}{|z|^{n+2\sigma}} \,\ud z\right|\\ 
&\leq C\sup_{Q_1}|\nabla^2_x u|\int_{|z|\leq \rho}|z|^{3-n-2\sigma}\,\ud z + C[\nabla_x u]_{\al,\frac{\al}{2\sigma};Q_T}\rho^{\al}\int_{|z|\leq \rho}|z|^{2-n-2\sigma}\,\ud z \\
&\leq \rho^{\al}(\va [u]_{2\sigma+\al,1+\frac{\al}{2\sigma};Q_T}+C(\va) |u|_{0;Q_T}).
\end{split}
\]
Similarly,
\[
\begin{split}
I_2&\leq C|\nabla_x u|_{0;Q_1}\int_{|z|\leq \rho}|z|^{3-n-2\sigma}\,\ud z + C|\nabla_x u|_{0;Q_1}\rho^{\al}\int_{|z|\leq \rho}|z|^{2-n-2\sigma}\,\ud z \\
&\leq \rho^{\al}(\va [u]_{2\sigma+\al,1+\frac{\al}{2\sigma};Q_T}+C(\va) |u|_{0;Q_T}).
\end{split}
\]

For $I_3$ and $I_4$ we first consider that $\sigma\le\frac{1}{2}$. 
Choose an $\al'>\al$ but sufficiently close to $\al$ such that $\al'<\min(1,2\sigma)$, then
\[
\begin{split}
I_3\leq [u]_{\al',\frac{\al'}{2\sigma};Q_1}\rho^{\al'}C\int_{|z|\geq \rho} |z|^{2\sigma+\al-\al'-n-2\sigma} \,\ud z\leq C[u]_{\al',\frac{\al'}{2\sigma};Q_1}\rho^{\al},
\end{split}
\]
\[
\begin{split}
I_4 \leq C\rho^{\al'} [u(\cdot,t_2)]_{2\sigma+\al-\al';\R^n}\int_{|z|\geq \rho} |z|^{2\sigma+\al-\al'-n-2\sigma} \,\ud z\leq C[u(\cdot,t_2)]_{2\sigma+\al-\al';\R^n}\rho^{\al}.
\end{split}
\]
If $\sigma>\frac{1}{2}$ and $2\sigma+\al<2$, then
\[
\begin{split}
I_3\leq [u]_{2\sigma+\al-1,\frac{2\sigma+\al-1}{2\sigma};Q_1}\rho^{2\sigma+\al-1}C\int_{|z|\geq \rho} |z|^{1-n-2\sigma} \,\ud z \leq C[u]_{2\sigma+\al-1,\frac{2\sigma+\al-1}{2\sigma};Q_1}\rho^{\al},
\end{split}
\]
\[
\begin{split}
I_4 \leq C \rho^{2\sigma+\al-1} |\nabla u(\cdot,t_2)|_{0;\R^n}\int_{|z|\geq \rho} |z|^{1-n-2\sigma} \,\ud z \leq C|\nabla u(\cdot,t_2)|_{0;\R^n}\rho^{\al}.
\end{split}
\]
If $2\sigma+\al>2$, then for $\rho\leq |z|\leq 1$, we have
\[
|u^z(x_1,t_1)-u^z(x_2)|\leq |\nabla_x^2 u|_{0;Q_1}|x_1-x_2||z|+|u_t|_{0;Q_1}|t_1-t_2|\leq |\nabla_x^2 u|_{0;Q_1}\rho|z|+|u_t|_{0;Q_1}|\rho^{2\sigma}.
\]
Hence,
\[
\begin{split}
I_3&\leq \left|\int_{1\geq |z|\geq \rho}\frac{\big(u^z(x_1,t_1)-u^z(x_2,t_2)\big)\eta^z(x_1,t_1)}{|z|^{n+2\sigma}} \,\ud z\right|\\
\quad&+\left|\int_{|z|\geq 1}\frac{\big(u^z(x_1,t_1)-u^z(x_2,t_2)\big)\eta^z(x_1,t_1)}{|z|^{n+2\sigma}} \,\ud z\right|\\
&\leq C |\nabla_x^2 u|_{0;Q_1}\rho\int_{1\geq |z|\geq \rho}|z|^{2-n-2\sigma}\,\ud z+C|u_t|_{0;Q_1}\rho^{2\sigma}\int_{1\geq |z|\geq \rho}|z|^{1-n-2\sigma}\\
\quad&+[u]_{\al,\frac{\al}{2\sigma};Q_1}\rho^{\al}\int_{|z|\geq 1}|z|^{-n-2\sigma}\,\ud z\\
\leq &C(|\nabla_x^2 u|_{0;Q_1}+|u_t|_{0;Q_1}+[u]_{\al,\frac{\al}{2\sigma};Q_1})\rho^{\al}.
\end{split}
\]
Similarly, for $I_4$ we have
\[
I_4\leq C|\nabla_x u|_{0;Q_1}\rho^{\al}.
\]
Combining these and applying some interpolation inequalities in Lemma \ref{interpolation inequality}, we reach \eqref{estimate of inner product}.
\end{proof}

\bigskip

\noindent Tianling Jin

\noindent Department of Mathematics, Rutgers University\\
110 Frelinghuysen Road, Piscataway, NJ 08854, USA\\
Email: \textsf{kingbull@math.rutgers.edu}

\medskip

\noindent Jingang Xiong

\noindent School of Mathematical Sciences, Beijing Normal University\\
Beijing 100875, China\\[1mm]
\noindent and\\[1mm]
\noindent Department of Mathematics, Rutgers University\\
110 Frelinghuysen Road, Piscataway, NJ 08854, USA\\[0.8mm]
Email: \textsf{jxiong@mail.bnu.edu.cn/jxiong@math.rutgers.edu}

\end{document}